\newcommand{\diff}[2]{\mbox{{\rm Diff}{${\,}_{#1}({\mathbb C}^{#2},0)$}}}
\newcommand{\diffh}[2]{\mbox{$\widehat{\rm Diff}{{\,}_{#1}({\mathbb C}^{#2},0)}$}}
\newcommand{\cn}[1]{\mbox{(${\mathbb C}^{#1},0$)}}
\newtheorem*{main}{Main Theorem}
\newtheorem{pro}{Proposition}[section]
\newtheorem{cor}{Corollary}[section]
\newtheorem{lem}{Lemma}[section]
\newtheorem{rem}{Remark}[section]
\newtheorem{defi}{Definition}[section]
\title
[Non embeddability of unipotent diffeomorphisms]
{Non-embeddability of general unipotent diffeomorphisms up to
formal conjugacy}
\author{Javier Rib\'{o}n}
\address{Instituto de Matem\'{a}tica, UFF, Rua M\'{a}rio Santos Braga S/N
Valonguinho, Niter\'{o}i, Rio de Janeiro, Brasil 24020-140}
\email{javier@mat.uff.br}
\thanks{The work of the author has been partly financed by Junta de Castilla y Le\'{o}n, Project
VA059A07 and CNPQ, Project PRONEX-Teoria Geom\'{e}trica das Equa\c{c}\~{o}es Diferenciais Complexas}
\keywords{holomorphic dynamical systems, diffeomorphisms, champs de vecteurs, potential theory}
\subjclass{37F75, 32H02, 32A05, 40A05}
\begin{document}
\begin{abstract}
 The formal class of a germ of diffeomorphism $\varphi$ is embeddable in a flow
if $\varphi$ is formally conjugated to the exponential of a germ
of vector field. We prove that there are complex analytic
unipotent germs of diffeomorphisms at $\cn{n}$ ($n>1$) whose
formal class is non-embeddable. The examples are inside a family
in which the non-embeddability is of geometrical type. The proof
relies on the properties of some linear functional operators that
we obtain through the study of polynomial families of
diffeomorphisms via potential theory.
\end{abstract}

\maketitle

\section{Introduction}
In this paper we prove
\begin{main}
There exists a unipotent germ of complex analytic diffeomorphism at $\cn{2}$
whose formal class is non-embeddable.
\end{main}
Denote by $\diff{}{n}$ the set of germs of complex analytic diffeomorphisms at $\cn{n}$ whereas
$\diffh{}{n}$ is the formal completion of $\diff{}{n}$.
A ``normal form" for $\varphi \in \diff{}{n}$ should be a diffeomorphism formally conjugated to $\varphi$
but somehow simpler. Every $\varphi \in \diff{}{n}$ admits a unique Jordan decomposition
\[ \varphi = \varphi_{s} \circ \varphi_{u} = \varphi_{u} \circ \varphi_{s} \]
where $\varphi_{s} \in \diffh{}{n}$ is semisimple and $\varphi_{u} \in \diffh{}{n}$ is unipotent,
that is $j^{1} \varphi_{u} - Id$ is nilpotent.
Then $\varphi_{s}$ is formally linearizable and has the natural normal form $j^{1} \varphi_{s}$.
Note that $\varphi_{u}$
is not formally linearizable unless $\varphi_{u} \equiv Id$. Thus, a different approach is required
to obtain simple models, up to formal conjugacy, for unipotent diffeomorphisms.

  A unipotent $\varphi \in \diff{}{n}$ is the exponential of a unique formal
nilpotent vector field $X$, the so called infinitesimal generator of $\varphi$
(see \cite{Ecalle} and \cite{MaRa:aen}), which is
geometrically significant even if it diverges in general. We denote $X$ by $\log \varphi$.
We say that the {\it formal class of $\varphi$ is embeddable} if $\log \varphi$ is formally conjugated to
a germ of convergent vector field $Y$.
The diffeomorphisms of the form ${\rm exp}(Y)$ provide continuous models
${\rm exp}(t Y)$ ($t \in {\mathbb C}$) to compare with the
discrete dynamics of $\varphi$.

  The existence of embeddable elements in the formal class of a diffeomorphism has useful implications.
For instance, for $n=1$, a unipotent $\varphi \in \diff{}{}$ satisfies $j^{1} \varphi =Id$ and
$\log \varphi$ is always conjugated to a germ of analytic vector field $Y$. The normalizing
transformation is in general divergent. Anyway, there exist regions (Fatou petals) where
it is the asymptotic development of an analytic mapping conjugating ${\rm exp}(Y)$ and $\varphi$.
This phenomenon provides the basis (see Il'yashenko's paper in \cite{Stokes:Ilya}) to construct the
Ecalle-Voronin complete system of analytic invariants
(Ecalle \cite{E}, Voronin \cite{V}, Martinet-Ramis \cite{MaRa:aen}, Malgrange \cite{mal:ast}).
The same strategy can be applied in some cases in higher dimension.
For instance Voronin \cite{Stokes:Voronin} classifies analytically the unipotent
diffeomorphisms in $\diff{}{2}$ which are formally conjugated to a diffeomorphism of the form
$(x,y + {x}^{k})={\rm exp}({x}^{k} \partial / \partial{y})$ for some $k \in {\mathbb N}$.

In the previous cases the analytic classification relies on the study of the normalizing transformation
to a simpler model.
Normalizing transformations are quite well understood whereas it is more difficult to describe
the possible final models up to formal conjugacy and their properties.
A good example is provided by Birkhoff normal forms attached to
analytic Hamiltonian vector fields (see \cite{PM1}, see \cite{Kuksin} for properties of
embeddability of symplectic diffeomorphisms in the flow of an analytic Hamiltonian vector field).
On the one hand the properties of the normalizing
transformations are well-documented. On the other hand the Birkhoff normal forms are
either always convergent or generically divergent \cite{PM1}. Nevertheless P\'{e}rez Marco
stresses that whether or not there exists a divergent Birkhoff normal form
is still an open problem. Our Main Theorem claims that
examples of unipotent diffeomorphisms whose formal class only contains diffeomorphisms
with divergent infinitesimal generator can be provided.
They can be chosen of the form
\[ \varphi_{\Delta,w}(x,y) = (x+y(y-x) \Delta(x,y), y+y(y-x)w(x,y)) . \]
Moreover, for this kind of diffeomorphisms
the obstruction to the embeddability of the formal class is of geometrical type.

  The proof of the Main Theorem is based on the study of the transport mapping. Suppose
$\log \varphi_{\Delta,w}$ is a germ of convergent vector field. Then
\[ L_{\Delta,w} \stackrel{def}{=} \frac{1}{y(y-x)} \log \varphi_{\Delta,w} \]
is regular, i.e. $L_{\Delta,w}(0) \neq 0$, and transversal to both $y=0$ and $y=x$.
\begin{figure}
\begin{center}
\includegraphics[height=5cm,width=9cm]{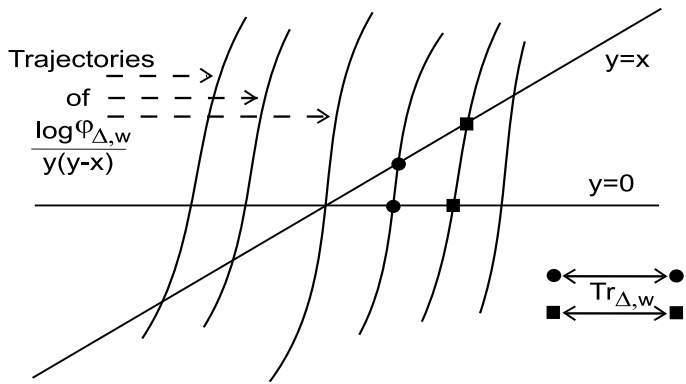}
\end{center}
\begin{center}
\caption{Real picture of the transport mapping}
\end{center}
\end{figure}
We can define a correspondence $Tr_{\Delta,w}$ associating to each point $P$ in $y=0$
the unique point in $y=x$ contained in the trajectory of $L_{\Delta,w}$
passing through $P$. This correspondence is the transport mapping. Even if
$\log \varphi_{\Delta,w}$ is divergent we manage to define $Tr_{\Delta,w}$; it is a formal invariant.
We prove the following characterization for the non-embeddability of the formal class:
\begin{pro}
If the formal class of $\varphi_{\Delta,w}$ is embeddable then $Tr_{\Delta,w}$ is an analytic mapping.
\end{pro}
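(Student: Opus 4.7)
Assume the formal class of $\varphi_{\Delta,w}$ is embeddable. Then there exist $\hat{\sigma}\in\diffh{}{2}$ and a germ of convergent vector field $Y$ at $\cn{2}$ such that $\hat{\sigma}^{*}Y=\log\varphi_{\Delta,w}$. My plan is to exploit the analyticity of the flow of $Y$ to produce an analytic transport map $Tr_{M}$, to identify $Tr_{\Delta,w}$ with $Tr_{M}$ up to boundary reparametrizations induced by $\hat{\sigma}$, and finally to argue that those reparametrizations are analytic.

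First, I would normalize $Y$. The zero set of $\log\varphi_{\Delta,w}=y(y-x)L_{\Delta,w}$ is the analytic variety $\{y(y-x)=0\}$, a pair of smooth analytic curves meeting transversally at $0$. Because $\hat{\sigma}$ is a formal isomorphism it carries this set to the formal zero set of $Y$; this zero set is in fact analytic, as $Y$ is convergent, so it too consists of two smooth analytic curves crossing transversally at $0$. Composing $\hat{\sigma}$ with a suitable convergent diffeomorphism (and possibly interchanging branches) I may assume $Y=y(y-x)M$ for a convergent regular $M$ transversal to both $\{y=0\}$ and $\{y=x\}$, and that $\hat{\sigma}$ preserves each of these two curves formally. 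Comparing the factorizations yields $\hat{\sigma}^{*}M=\hat{u}\,L_{\Delta,w}$ for some formal unit $\hat{u}$.

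Second, the transport map $Tr_{M}:\{y=0\}\to\{y=x\}$ along the trajectories of $M$ is analytic, being obtained from the flow of the analytic regular vector field $M$. Since multiplication by the unit $\hat{u}$ preserves trajectories, $\hat{\sigma}$ intertwines the (formal) trajectories of $L_{\Delta,w}$ with the (analytic) trajectories of $M$. Consequently
\[
Tr_{\Delta,w}\;=\;\bigl(\hat{\sigma}|_{\{y=x\}}\bigr)^{-1}\circ Tr_{M}\circ\hat{\sigma}|_{\{y=0\}},
\]
and the proposition reduces to showing that the restrictions $\hat{\sigma}|_{\{y=0\}}$ and $\hat{\sigma}|_{\{y=x\}}$ are analytic germs of self-maps of those curves.

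The delicate point, which I expect to be the main obstacle, is precisely this analyticity of the boundary restrictions. The natural strategy is to identify an analytic one-variable invariant of $\varphi_{\Delta,w}$ along each fixed curve that any formal conjugation must intertwine, and that determines the restriction of $\hat{\sigma}$. A first candidate is the transversal multiplier: a direct computation of $d\varphi_{\Delta,w}$ at a point $(x_{0},0)\in\{y=0\}$ gives the transversal eigenvalue $1-x_{0}\,w(x_{0},0)$, and the analogous transversal multiplier of $\exp(Y)$ along its corresponding analytic fixed curve is analytic; $\hat{\sigma}|_{\{y=0\}}$ must conjugate one to the other pointwise. When the derivative of this analytic multiplier function does not vanish at the origin, the matching equation is locally invertible and forces $\hat{\sigma}|_{\{y=0\}}$ to coincide with an analytic germ (and similarly on $\{y=x\}$). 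In the remaining cases one has to refine the invariant using higher jets of $\varphi_{\Delta,w}$ transverse to the fixed curves, or use the flexibility available in the choice of $\hat{\sigma}$ to arrange matching with a non-degenerate multiplier. Once the restrictions are known to be analytic, $Tr_{\Delta,w}$ is a composition of analytic maps and the proposition follows.
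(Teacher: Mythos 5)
Your proposal is essentially the paper's own proof: reduce, via the formal invariance of the transport mapping, to the analyticity of the restrictions of $\hat{\sigma}$ to the two fixed curves, and then force that analyticity by matching a scalar analytic invariant along the curves. Your ``transversal multiplier'' is the same invariant the paper uses, namely the Jacobian determinant ${\mathcal J}_{\Delta,w}$ restricted to the fixed curves: since each curve is fixed pointwise, the eigenvalue along it is $1$, so the determinant equals the transversal eigenvalue, and on $y=0$ it is $1-xw(x,0)$, exactly your formula. (The paper proves the intertwining of these couples by the chain rule, its Proposition on the invariance of $(y=0,({\mathcal J}_{\Delta,w})_{|y=0})$ and $(y=x,({\mathcal J}_{\Delta,w})_{|y=x})$.)

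The one point where you stop short --- the ``remaining cases'' in which the analytic multiplier of $\exp(Y)$ along its fixed curve might have vanishing derivative at the origin --- is in fact vacuous, and your own setup already contains the reason. Write $\hat{\sigma}(x,0)=\eta\circ\hat{h}(x)$, where $\eta$ is a convergent minimal parametrization of the analytic curve $\hat{\sigma}(y=0)$ (its analyticity is exactly the paper's Proposition \ref{pro:invfix}, proved via the formal Nullstellensatz) and $\hat{h}$ is a formal diffeomorphism in one variable. The matching of invariants reads $({\mathcal J}_{\exp(X)}\circ\eta)\circ\hat{h}={\mathcal J}_{\Delta,w}(x,0)=1-xw(x,0)+\cdots$, and differentiating at $0$ gives $({\mathcal J}_{\exp(X)}\circ\eta)'(0)\,\hat{h}'(0)=-w(0,0)\neq 0$: the non-degeneracy on the analytic side is forced by the formal conjugacy equation itself, so your fallback via higher jets or re-choosing $\hat{\sigma}$ is never needed. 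This is precisely the paper's computation, which then concludes $\hat{h}=({\mathcal J}_{\exp(X)}\circ\eta-1)^{(-1)}\circ({\mathcal J}_{\Delta,w}(x,0)-1)$ is convergent, hence $\hat{\sigma}(x,0)=\eta\circ\hat{h}\in{\mathbb C}\{x\}^{2}$, and similarly on $y=x$. A minor economy in the paper compared with your argument: it never normalizes $Y$ to the form $y(y-x)M$ nor straightens the image curves; it works directly with the curves $\hat{\sigma}(y=0)$ and $\hat{\sigma}(y=x)$ through their convergent parametrizations, so your (harmless, but detail-laden) normalization step can be skipped.
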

  Fix a unit $w \in {\mathbb C}\{x,y\}$, $w(0) \neq 0$,  such that $\ln \varphi_{0,w}$ is not
convergent. We claim that there exists
$\Delta$ in ${\mathbb C}\{x,y\}$, $\Delta(0)=0$,  such that $Tr_{\Delta,w}$ diverges. We argue by contradiction.
Note that $Tr_{\Delta,w}$ can be analytic
(for example $Tr_{\Delta,w}(x,0) \equiv (x,x)$) whereas $\log \varphi_{\Delta,w}$ is divergent.
The divergence of $Tr_{\Delta,w}$ is even stronger than the non-existence of
a germ of convergent vector field collinear to $\log \varphi_{\Delta,w}$. It is obtained
through a fine analysis of the nature of the family ${(\varphi_{\Delta,w})}_{\Delta}$.

  We consider polynomial families on $\lambda \in {\mathbb C}$ of the form
\[ \varphi_{\lambda \Delta,w}(x,y) = (x + \lambda y(y-x) \Delta(x,y) , y+y(y-x) w(x,y) ) . \]
The embeddability of the formal class of $\varphi_{\lambda \Delta,w}$
for any $\lambda \in {\mathbb C}$ allows to find a linear equation
\[ \hat{\epsilon} - \hat{\epsilon} \circ \varphi_{0,w} = y(y-x) \Delta
\ \ \ \ {\rm (homological \ equation)}  \]
such that $\hat{\epsilon}_{\Delta}(x,x) - \hat{\epsilon}_{\Delta}(x,0) \in {\mathbb C}\{x\}$ for every
solution $\hat{\epsilon}_{\Delta} \in {\mathbb C}[[x,y]]$.
The proof of the convergence of $\hat{\epsilon}_{\Delta}(x,x) - \hat{\epsilon}_{\Delta}(x,0)$ is based
on potential theory techniques.
The homological equation has a formal
solution $\hat{\epsilon}_{\Delta}$ for any $\Delta \in {\mathbb C}[[x,y]]$, moreover
$\hat{\epsilon}_{\Delta}(x,x) - \hat{\epsilon}_{\Delta}(x,0)$ does not depend on the choice
of $\hat{\epsilon}_{\Delta}$. Then the operator
$S_{w}:{\mathbb C}[[x,y]] \to {\mathbb C}[[x]]$ given by
\[ S_{w}(\Delta) = \hat{\epsilon}_{\Delta}(x,x) - \hat{\epsilon}_{\Delta}(x,0) \]
is linear, well-defined and $S_{w}({\mathfrak m}_{x,y}) \subset {\mathbb C}\{x\}$
where ${\mathfrak m}_{x,y} \subset {\mathbb C}\{x,y\}$ is the maximal ideal.
Now it suffices to study a linear operator attached
to the dynamically simple diffeomorphism $\varphi_{0,w}$, in particular
$x \circ \varphi_{0,w} = x$ will be a key property to prove that
$S_{w}({\mathfrak m}_{x,y})$ contains divergent elements.

  The operator $S_{w}$ was defined in terms of a difference equation. We can replace the
difference equation with a differential equation easier to handle.
More precisely
$S_{w}(\Delta) = \hat{\Gamma}(x,x) - \hat{\Gamma}(x,0)$ for
every solution $\hat{\Gamma} \in {\mathbb C}[[x,y]]$ of
\[ (\log \varphi_{0,w}) (\hat{\Gamma}) = - y (y-x) \Delta . \]
Since $\log \varphi_{0,w}$ diverges and  $x \circ \varphi_{0,w} = x$ then
$\log \varphi_{0,w}= \hat{w} y (y-x) \partial / \partial{y}$ for
some divergent $\hat{w} \in {\mathbb C}[[x,y]]$. The collinearity of
$\log \varphi_{0,w}$ and $\partial/\partial{y}$, in addition to some
functional analysis techniques, can be used to prove that
the property $S_{w}({\mathfrak m}_{x,y}) \subset {\mathbb C}\{x\}$ implies that
$\hat{w} \in {\mathbb C}\{x,y\}$. Here we have our contradiction.

  We do not describe the nature of the transport mapping, besides the
fact that it is generically divergent. It would be interesting to know what
divergent mappings can be obtained as transport mappings of diffeomorphisms of
type $\varphi_{\Delta,w}$.
\section{Basic facts}
\label{sec:basic}
In this section we introduce some well-known facts about diffeomorphisms and vector
fields for the sake of completeness.

We denote by ${\mathcal X} \cn{n}$ the set of germs of complex analytic vector fields
at $0 \in  {\mathbb C}^{n}$.  A {\it formal vector field} $\hat{X}$ is a derivation of the ring
${\mathbb C}[[x_{1},\hdots,x_{n}]]$. We also express $\hat{X}$ in the more conventional form
\[ \hat{X} = \sum_{j=1}^{n} \hat{X}(x_{j}) \frac{\partial}{\partial x_{j}}  . \]
Given $X \in {\mathcal X} \cn{n}$ the derivation associated to $X$ restricts to
a derivation of the ring ${\mathbb C} \{ x_{1},\hdots,x_{n} \}$. Indeed
$X(g)$ is the Lie derivative $L_{X} g$ for any $g \in {\mathbb C}\{x_{1}, \hdots, x_{n}\}$.

We say that a formal vector field
$\hat{X} = \sum_{j=1}^{n} \hat{a}_{j}(x_{1}, \hdots, x_{n}) \partial / \partial{x_{j}}$ where
$\hat{a}_{j} \in {\mathbb C}[[x_{1}, \hdots, x_{n}]]$ for any
$1 \leq j \leq n$ is nilpotent if $\hat{X}(0)=0$ and $j^{1} \hat{X}$ is nilpotent. We denote by
$\hat{\mathcal X}_{N} \cn{n}$ and ${\mathcal X}_{N} \cn{n}$ the sets of formal nilpotent vector fields
and germs of nilpotent vector fields respectively.

Let $\hat{\mathfrak m}$ be the maximal ideal of ${\mathbb C}[[x_{1},\hdots,x_{n}]]$.
We say that a {\it formal diffeomorphism} is an automorphism
$\hat{\sigma}: {\mathbb C}[[x_{1}, \hdots, x_{n}]] \to {\mathbb C}[[x_{1}, \hdots, x_{n}]]$ of
${\mathbb C}$-algebras such that $\hat{\sigma}(\hat{\mathfrak m})= \hat{\mathfrak m}$.
Equivalently we can express $\hat{\sigma}$ in the form
\[ \hat{\sigma} = (\hat{\sigma}(x_{1}) , \hdots, \hat{\sigma}(x_{n})) \in {\mathbb C}[[x_{1}, \hdots, x_{n}]]^{n} \]
where $j^{1} \hat{\sigma}$ is a linear isomorphism. We denote by $\diffh{}{n}$ and
$\diff{}{n}$ the set of formal diffeomorphisms and germs of diffeomorphisms respectively.
If $j^{1} \hat{\sigma}$ is unipotent
(i.e. if $1$ is the only eigenvalue of $j^{1} \hat{\sigma}$) then we say that $\hat{\sigma}$ is unipotent.
We denote by $\diffh{u}{n}$ the set of formal unipotent diffeomorphisms.

Let $X \in {\mathcal X} \cn{n}$; suppose that $X$ is singular at $0$.
We denote by ${\rm exp}(tX)$ (for $t \in {\mathbb C}$) the flow of the vector field $X$, it is
the unique solution
of the differential equation
\[ \frac{\partial}{\partial{t}} {\rm exp}(tX) = X({\rm exp}(tX)) \]
with initial condition ${\rm exp}(0X)=Id$. We define the exponential
${\rm exp}(X)$ of $X$ as
${\rm exp}(1X)$. We define the exponential operator
\[
\begin{array}{rccc}
{\rm exp}(\hat{X}): & {\mathbb C}[[x_{1},\hdots,x_{n}]] & \to & {\mathbb C}[[x_{1},\hdots,x_{n}]]
\\
& g & \to & \sum_{j=0}^{\infty} \frac{\hat{X}^{j}}{j!} (g) .
\end{array}
\]
for any element $\hat{X}$  of $\hat{\mathcal X}_{N} \cn{n}$.
The definitions of exponential coincide if $\hat{X} \in {\mathcal X}_{N} \cn{n}$, i.e. the image of
$g$ by the operator ${\rm exp}(\hat{X})$ is $g \circ {\rm exp}(\hat{X})$ for any
$g \in {\mathbb C} \{x_{1},\hdots,x_{n}\}$.
Given $t \in {\mathbb C}$ the formal flow ${\rm exp}(t \hat{X})$ is an operator such that
\begin{equation}
\label{equ:fflow}
{\rm exp}(t \hat{X})(g) =  \sum_{j=0}^{\infty} \frac{(t \hat{X})^{j}}{j!} (g) =
\sum_{j=0}^{\infty} t^{j} \frac{\hat{X}^{j}}{j!} (g)
\end{equation}
for any $g \in {\mathbb C}[[x_{1},\hdots,x_{n}]]$. The property
\begin{equation}
\label{equ:comm}
{\rm exp}((t+s) \hat{X}) = {\rm exp}(t \hat{X}) \circ {\rm exp}(s \hat{X}) \ \forall s,t \in {\mathbb C}
\end{equation}
follows from equation (\ref{equ:fflow}) and the formal identity $e^{t+s}=e^{t} e^{s}$ since
$t \hat{X}$ and $s \hat{X}$ commute.

The nilpotent character of $\hat{X}$ implies that the power series
${\rm exp}(\hat{X})(g)$ converges in the Krull
topology for any $g \in {\mathbb C}[[x_{1},\hdots,x_{n}]]$. Moreover, since $\hat{X}$
is a derivation
then ${\rm exp}(\hat{X})$ acts as a diffeomorphism, i.e.
\[ {\rm exp}(\hat{X})(g_{1} g_{2})= {\rm exp}(\hat{X})(g_{1}) {\rm exp}(\hat{X})(g_{2})
\ \ \forall  g_{1},g_{2} \in {\mathbb C}[[x_{1},\hdots,x_{n}]]. \]
Moreover $j^{1} {\rm exp}(\hat{X})= {\rm exp}(j^{1} \hat{X})$, thus
$j^{1} {\rm exp}(\hat{X})$ is a unipotent linear
isomorphism. The following proposition is classical.
\begin{pro} (see \cite{Ecalle}, \cite{MaRa:aen})
\label{pro:exp} The mapping $\hat{X} \mapsto {\rm
exp}(1 \hat{X})$ maps bijectively $\hat{\mathcal X}_{N} \cn{n}$
onto $\diffh{u}{n}$. Moreover, if $\hat{X} \in \hat{\mathcal
X}_{N} \cn{n}$ then ${\rm exp}(t \hat{X})(g)$ belongs to
${\mathbb C}[t][[x_{1} , \hdots , x_{n}]]$ for any $g \in {\mathbb C}[[x_{1},\hdots,x_{n}]]$.
\end{pro}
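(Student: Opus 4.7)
The plan is to leverage the nilpotence of $j^{1}\hat{X}$ to make the naive series for $\exp$ and $\log$ converge in the Krull topology, and then verify the standard group-like identities. The linchpin observation is the following: since $\hat{X}$ vanishes at the origin, it preserves every power $\hat{\mathfrak m}^{k}$ of the maximal ideal, and the action induced on the graded piece $\hat{\mathfrak m}^{k}/\hat{\mathfrak m}^{k+1}$ is determined by $j^{1}\hat{X}$. Because $j^{1}\hat{X}$ is nilpotent on the space of linear forms, its induced action on each symmetric power is also nilpotent (for instance because a nilpotent matrix integrates to a polynomial one-parameter group in $GL_{n}$, whose image on symmetric powers is again polynomial, hence unipotent). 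Therefore $\hat{X}$ is nilpotent on every finite-dimensional quotient ${\mathbb C}[[x_{1},\ldots,x_{n}]]/\hat{\mathfrak m}^{N+1}$: there is an integer $M=M(N)$ such that $\hat{X}^{M}({\mathbb C}[[x_{1},\ldots,x_{n}]])\subset\hat{\mathfrak m}^{N+1}$.

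This delivers the polynomial-in-$t$ assertion at once: given $g$ and $N$, only the finitely many terms with $j<M(N)$ contribute to the reduction of (\ref{equ:fflow}) modulo $\hat{\mathfrak m}^{N+1}$, so each coefficient of $\exp(t\hat{X})(g)$ is a polynomial in $t$. The same nilpotence makes $\exp(\hat{X})$ a well-defined ${\mathbb C}$-linear operator on ${\mathbb C}[[x_{1},\ldots,x_{n}]]$; the Leibniz formula $\hat{X}^{j}(g_{1}g_{2})=\sum_{i=0}^{j}\binom{j}{i}\hat{X}^{i}(g_{1})\hat{X}^{j-i}(g_{2})$ then yields multiplicativity after reorganizing the double series as a Cauchy product (again legitimate modulo every $\hat{\mathfrak m}^{N+1}$), and $j^{1}\exp(\hat{X})=\exp(j^{1}\hat{X})$ is unipotent. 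Hence $\exp(\hat{X})\in\diffh{u}{n}$.

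For surjectivity, given $\hat\sigma\in\diffh{u}{n}$ I would construct its logarithm by
\[
\log\hat\sigma \;:=\; \sum_{j\geq 1}\frac{(-1)^{j-1}}{j}(\hat\sigma-\mathrm{Id})^{j}.
\]
The same graded argument, applied now to $\hat\sigma-\mathrm{Id}$ (whose action on $\hat{\mathfrak m}^{k}/\hat{\mathfrak m}^{k+1}$ is controlled by the nilpotent linear map $j^{1}\hat\sigma-\mathrm{Id}$), produces nilpotence on every finite quotient and hence Krull convergence. It remains to check three things: that $\log\hat\sigma$ is a ${\mathbb C}$-derivation, which reduces to the Leibniz-rule identity coming from the formal equality $\log(1+z)$ applied to the multiplicative operator $\hat\sigma$; that $j^{1}(\log\hat\sigma)=\log(j^{1}\hat\sigma)$ is nilpotent, so $\log\hat\sigma\in\hat{\mathcal X}_{N}\cn{n}$; and that $\exp(\log\hat\sigma)=\hat\sigma$, which on each quotient amounts to the purely algebraic identity $e^{\log(1+z)}=1+z$ for the nilpotent operator $z=\hat\sigma-\mathrm{Id}$.

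Injectivity then follows from the same identities in reverse: if $\exp\hat{X}=\exp\hat{Y}$, applying $\log$ inside each quotient ${\mathbb C}[[x_{1},\ldots,x_{n}]]/\hat{\mathfrak m}^{N+1}$ and using $\log e^{z}=z$ for nilpotent $z$ gives $\hat{X}\equiv\hat{Y}\pmod{\hat{\mathfrak m}^{N+1}}$ for every $N$, whence $\hat{X}=\hat{Y}$. The main obstacle I expect is the bookkeeping that turns the formal identities from $\mathbb{Q}[[z]]$ into honest operator identities on each $\mathbb{C}[[x_{1},\ldots,x_{n}]]/\hat{\mathfrak m}^{N+1}$, in particular the derivation property of $\log\hat\sigma$. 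Once these are in hand, both the bijectivity and the polynomial dependence on $t$ are immediate.
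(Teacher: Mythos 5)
The paper itself gives no proof of this proposition: it is stated as classical, with references to Ecalle and Martinet--Ramis, so your argument must stand on its own rather than be matched line-by-line against the text. Your route --- filter $\mathbb{C}[[x_{1},\ldots,x_{n}]]$ by powers of $\hat{\mathfrak m}$, observe that $\hat X$ (resp.\ $\Theta=\hat\sigma-\mathrm{Id}$) preserves the filtration and acts nilpotently on each graded piece because $j^{1}\hat X$ (resp.\ $j^{1}\hat\sigma-\mathrm{Id}$) does so on symmetric powers, deduce nilpotence on every finite-dimensional quotient $A_{N}=\mathbb{C}[[x_{1},\ldots,x_{n}]]/\hat{\mathfrak m}^{N+1}$, and then run the one-variable identities $e^{\log(1+z)}=1+z$ and $\log e^{z}=z$ quotient by quotient --- is the standard proof, and your derivation of the polynomial dependence on $t$ from the finiteness of the sum modulo $\hat{\mathfrak m}^{N+1}$ is exactly right.

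The one step you flag but do not actually carry out is the genuine gap: the Leibniz property of $\log\hat\sigma$ does \emph{not} ``reduce to the formal equality $\log(1+z)$'' in one variable. The operator $\Theta=\hat\sigma-\mathrm{Id}$ is not a derivation; it satisfies the twisted rule $\Theta(ab)=\Theta(a)b+a\Theta(b)+\Theta(a)\Theta(b)$, and converting this into the Leibniz rule for $\sum_{j\geq 1}(-1)^{j-1}\Theta^{j}/j$ requires using the multiplicativity of $\hat\sigma$ in an essential way, not just power-series calculus in a single nilpotent element. Since surjectivity needs $\log\hat\sigma$ to be an honest formal vector field, this must be closed. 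The cleanest repair uses the polynomiality you already proved: on a fixed quotient $A_{N}$ put $D=\log\hat\sigma$ and $\Phi(t)=\exp(tD)$, whose matrix entries are polynomials in $t$. The identity $\exp(D)=\hat\sigma$ on $A_{N}$ holds by the one-variable identity applied to the nilpotent $\Theta$ (all terms commute, being polynomials in $\Theta$), and the group law ${\rm exp}((t+s)D)={\rm exp}(tD)\circ{\rm exp}(sD)$ of equation (\ref{equ:comm}) then gives $\Phi(m)=\hat\sigma^{m}$ for every integer $m$, which is an algebra automorphism of $A_{N}$. Hence for fixed $a,b\in A_{N}$ the polynomial map $t\mapsto\Phi(t)(ab)-\Phi(t)(a)\Phi(t)(b)$ vanishes at all integers, so it vanishes identically; differentiating at $t=0$ yields $D(ab)=D(a)b+aD(b)$, i.e.\ $\log\hat\sigma$ is a derivation on every $A_{N}$ and therefore on $\mathbb{C}[[x_{1},\ldots,x_{n}]]$. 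The same group law also closes a smaller omission in your first paragraph: $\exp(\hat X)$ is invertible, with inverse $\exp(-\hat X)$, so it is a formal diffeomorphism and not merely a unipotent algebra endomorphism. With these two points supplied, your proof is complete.
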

Consider the inverse mapping $\log: \diffh{u}{n} \to \hat{\mathcal X}_{N} \cn{n}$.
An element $\varphi$ of $\diffh{u}{n}$ defines an isomorphism
\[ \varphi : {\mathbb C}[[x_{1}, \hdots, x_{n}]] \to {\mathbb C}[[x_{1}, \hdots, x_{n}]] \]
of ${\mathbb C}$-algebras such that $\varphi(g) = g \circ \varphi$ for any
$g \in {\mathbb C}[[x_{1}, \hdots, x_{n}]]$. Denote by
$\Theta: {\mathbb C}[[x_{1}, \hdots, x_{n}]] \to {\mathbb C}[[x_{1}, \hdots, x_{n}]]$ the operator
$\varphi - Id$,  i.e. we have $\Theta(g) = \varphi(g) - Id(g) = g \circ \varphi -g$ for any
$g \in {\mathbb C}[[x_{1},\hdots,x_{n}]] $. The operator $\varphi - Id$ is not associated to
a diffeomorphism. We have
\begin{equation}
\label{equ:log}
 (\log \varphi)(g) = (\log (Id + \Theta))(g) =
\sum_{j=1}^{\infty} {(-1)}^{j+1} \frac{\Theta^{j} (g)}{j}
\end{equation}
for $g \in {\mathbb C}[[x_{1},\hdots,x_{n}]] $. The series in the
right hand side converges in the Krull topology since $\varphi$ is
unipotent. Moreover $j^{1}(\log \varphi)= \log(j^{1} \varphi)$ is
nilpotent and $\log \varphi$ satisfies the Leibnitz rule. We say
that $\log \varphi$ is the {\it infinitesimal generator} of
$\varphi$. Even if $\varphi \in \diffh{u}{n} \cap \diff{}{n}$ in general $\varphi$ is divergent.

  Consider an ideal $\hat{I} \subset  {\mathbb C}[[x_{1}, \hdots, x_{n}]]$. We denote by
$Z(\hat{I})$ the set of formal curves $\hat{\gamma} \in (t {\mathbb C}[[t]])^{n}$ such that
$\hat{h} \circ \hat{\gamma} =0$ for any $\hat{h} \in \hat{I}$. Conversely, for
$\hat{\Delta} \subset (t {\mathbb C}[[t]])^{n}$ we define $I(\hat{\Delta})$ as the set of
series $\hat{h} \in {\mathbb C}[[x_{1}, \hdots, x_{n}]]$ such that $\hat{h} \circ \hat{\gamma}=0$
for any $\hat{\gamma} \in \hat{\Delta}$. We have
\begin{pro}[Formal theorem of zeros \cite{Tou:ide}, pages 49-50]
Let $\hat{I}$ be an ideal of ${\mathbb C}[[x_{1}, \hdots, x_{n}]]$. Then
\[ I(Z(\hat{I})) = \sqrt{\hat{I}} . \]
\end{pro}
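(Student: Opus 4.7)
The plan is to follow the classical Nullstellensatz strategy, reducing to a prime ideal by primary decomposition and then producing enough formal curves via Noether normalization for complete local rings.

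The easy inclusion $\sqrt{\hat{I}}\subseteq I(Z(\hat{I}))$ is immediate. Suppose $\hat{h}^{k}\in\hat{I}$ and $\hat{\gamma}\in Z(\hat{I})$. Then $(\hat{h}\circ\hat{\gamma})^{k}=\hat{h}^{k}\circ\hat{\gamma}=0$ in ${\mathbb C}[[t]]$, and since ${\mathbb C}[[t]]$ is an integral domain, $\hat{h}\circ\hat{\gamma}=0$, so $\hat{h}\in I(Z(\hat{I}))$.

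For the reverse inclusion, I would exploit that ${\mathbb C}[[x_{1},\ldots,x_{n}]]$ is Noetherian. Take a primary decomposition $\hat{I}=\bigcap_{j}\hat{Q}_{j}$ with associated primes $\hat{P}_{j}=\sqrt{\hat{Q}_{j}}$, so that $\sqrt{\hat{I}}=\bigcap_{j}\hat{P}_{j}$ and $Z(\hat{I})\supseteq Z(\hat{P}_{j})$ for each $j$. Hence it suffices to prove $I(Z(\hat{P}))\subseteq\hat{P}$ for every prime ideal $\hat{P}$.

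Fix such a prime $\hat{P}$ and $\hat{h}\notin\hat{P}$; the goal is to construct $\hat{\gamma}\in(t{\mathbb C}[[t]])^{n}$ with $\hat{g}\circ\hat{\gamma}=0$ for all $\hat{g}\in\hat{P}$ but $\hat{h}\circ\hat{\gamma}\neq 0$. Let $R={\mathbb C}[[x_{1},\ldots,x_{n}]]/\hat{P}$; this is a complete local Noetherian domain, and $\hat{h}$ descends to a nonzero element $\bar{h}\in R$. By Cohen's structure theorem (Noether normalization for complete local ${\mathbb C}$-algebras), $R$ contains a formal power series subring $A={\mathbb C}[[y_{1},\ldots,y_{d}]]$, with $d=\dim R$, such that $R$ is a finite $A$-module. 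Now I would pick a sufficiently generic linear parametrization $y_{i}=c_{i}t$, obtaining an injective ${\mathbb C}$-algebra map $A\to{\mathbb C}[[t]]$; generic choice of the $c_{i}$ ensures that the image of $\bar{h}$ under the extended map remains nonzero (the bad locus being a proper closed subset of $\mathrm{Spec}\,A$, coming from the annihilator of $\bar h$ in the finite extension $R$). Extending through the module-finite integral extension $A\subseteq R$ requires, in general, a Puiseux-type argument for formal power series, producing a ${\mathbb C}$-algebra homomorphism $R\to{\mathbb C}[[t^{1/e}]]$ for some $e\geq 1$ sending $\bar{h}$ to a nonzero element. The reparametrization $t=s^{e}$ then yields a genuine formal curve $\hat{\gamma}\in(s{\mathbb C}[[s]])^{n}$ with the required properties.

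The main obstacle is the construction step: choosing the generic line in the base $A$ so that $\bar{h}$ survives specialization, and then lifting coherently through the finite extension via a Puiseux-type expansion. Once Noether normalization for complete local rings is available, these steps are essentially formal, but establishing that normalization is itself the substantive input. All of this is carried out in detail in Tougeron's book, which is the reference cited.
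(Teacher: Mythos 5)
You should first be aware that the paper contains no proof of this proposition: it is stated with a citation to Tougeron (pages 49--50) and used as a black box. So the comparison here is between your sketch and the standard argument in the cited literature, which your outline indeed follows: the easy inclusion via integrality of ${\mathbb C}[[t]]$, the reduction to a prime $\hat{P}$ by primary decomposition, and curve selection through Cohen/Noether normalization of the complete local domain $R={\mathbb C}[[x_{1},\ldots,x_{n}]]/\hat{P}$. That skeleton is correct (including the degenerate case $\hat{P}=\hat{\mathfrak m}$, where the zero curve suffices).

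However, two steps are misjustified as written, and one of them is the crux. (i) For $d\geq 2$ the specialization $y_{i}\mapsto c_{i}t$ is \emph{not} injective on $A={\mathbb C}[[y_{1},\ldots,y_{d}]]$: its kernel is the prime ideal $\mathfrak{q}$ of the chosen line, generated by $d-1$ linear forms. This slip is harmless, since injectivity on $A$ is not what the argument needs. (ii) More seriously, your ``bad locus coming from the annihilator of $\bar{h}$'' is vacuous: $R$ is a domain, so $\mathrm{Ann}_{R}(\bar{h})=0$ and defines no locus; as reasoned, the key claim that $\bar{h}$ survives specialization has no justification. The correct mechanism is integral dependence: $\bar{h}$ satisfies $\bar{h}^{m}+a_{m-1}\bar{h}^{m-1}+\cdots+a_{0}=0$ with $a_{i}\in A$ and, by minimality and $\bar{h}\neq 0$, $a_{0}\neq 0$; thus $\bar{h}$ divides $a_{0}$ in $R$, and one chooses $(c_{i})$ generic so that $a_{0}(c_{1}t,\ldots,c_{d}t)\neq 0$ (possible because the lowest homogeneous form of $a_{0}$ is nonzero). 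The Puiseux-type lift you invoke also needs an argument: since $\mathfrak{q}$ is generated by $d-1$ elements, Krull's height theorem gives $\dim R/\mathfrak{q}R\geq \dim R-(d-1)=1$, so there is a prime $\mathfrak{p}\supseteq \mathfrak{q}R$ with $\dim R/\mathfrak{p}=1$; dimension count forces $\mathfrak{p}\cap A=\mathfrak{q}$, whence ${\mathbb C}[[t]]\cong A/\mathfrak{q}\hookrightarrow R/\mathfrak{p}$, and $\bar{h}\notin\mathfrak{p}$, since otherwise $a_{0}\in\mathfrak{p}\cap A=\mathfrak{q}$. Finally $R/\mathfrak{p}$ is a one-dimensional complete local domain whose normalization is local (Henselianity) and, by Cohen, isomorphic to ${\mathbb C}[[s]]$ with $t\mapsto (\mathrm{unit})\,s^{e}$; reparametrizing gives the curve. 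With these repairs your outline becomes a complete proof of the cited statement.
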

Let $\hat{Y}=\hat{a}(x,y) \partial / \partial x + \hat{b}(x,y) \partial / \partial y$ be a formal vector field.
We consider the set
\[ {\mathcal F} (\hat{Y}) = \{ \hat{g} \in {\mathbb C}[[x,y]] \ : \ \hat{Y}(\hat{g})=0 \}  \]
of first integrals of $\hat{Y}$. We say that $\hat{f} \in {\mathcal F}(\hat{Y})$ is primitive
if $\sqrt[k]{\hat{f}}$ does not belong to ${\mathbb C}[[x,y]]$ for $k>1$.
If ${\mathcal F}(\hat{Y}) \neq {\mathbb C}$ there exists a primitive formal first integral $\hat{f}$; moreover we have
\[ {\mathcal F}(\hat{Y}) = {\mathbb C}[[z]] \circ \hat{f}  \]
(Mattei-Moussu \cite{MaMo:Aen}), and the primitive first integral
can be chosen in ${\mathbb C}\{ x,y \}$ if $\hat{Y}$ is a germ of
holomorphic vector field.

We can give an alternative characterization for the first integrals of the infinitesimal generator
of a unipotent diffeomorphism.
\begin{lem}
\label{lem:expdif}
  Let $\sigma \in \diffh{u}{n}$ and $\hat{f} \in {\mathbb C}[[x_{1}, \hdots, x_{n}]]$. Then
\[ (\log \sigma) (\hat{f}) =0 \Leftrightarrow \hat{f} \circ \sigma = \hat{f}. \]
\end{lem}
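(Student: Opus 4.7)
The plan is to exploit the symmetric series expressions available for $\sigma$ and for $\log\sigma$: on the one hand equation (\ref{equ:log}) expresses $\log\sigma$ as a power series in the operator $\Theta = \sigma - Id$, on the other hand Proposition \ref{pro:exp} gives $\sigma = \mathrm{exp}(\log\sigma)$, i.e.\ $\sigma$ is the power series in the operator $\log\sigma$ obtained from the exponential. The central observation is the trivial one: for any $\mathbb{C}$-linear operator $D$ on ${\mathbb C}[[x_{1},\hdots,x_{n}]]$ and any $\hat{f}$ with $D(\hat{f})=0$, one has $D^{j}(\hat{f})=0$ for every $j\ge 1$. Each direction of the equivalence is then a one-line application of this observation to one of the two series.

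For the implication $\hat{f}\circ\sigma = \hat{f} \Rightarrow (\log\sigma)(\hat{f})=0$, I would start from equation (\ref{equ:log}). The hypothesis means exactly $\Theta(\hat{f})=\hat{f}\circ\sigma - \hat{f}=0$, so by the observation $\Theta^{j}(\hat{f})=0$ for all $j\ge 1$, and every term of the series
\[
(\log\sigma)(\hat{f}) = \sum_{j=1}^{\infty} (-1)^{j+1}\frac{\Theta^{j}(\hat{f})}{j}
\]
vanishes; Krull-convergence in ${\mathbb C}[[x_{1},\hdots,x_{n}]]$ makes the identification rigorous.

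For the converse $(\log\sigma)(\hat{f})=0 \Rightarrow \hat{f}\circ\sigma=\hat{f}$, I would use that by Proposition \ref{pro:exp} the unipotent diffeomorphism $\sigma$ coincides, as an operator on ${\mathbb C}[[x_{1},\hdots,x_{n}]]$, with $\mathrm{exp}(\log\sigma)$, so
\[
\hat{f}\circ\sigma \;=\; \sigma(\hat{f}) \;=\; \sum_{j=0}^{\infty}\frac{(\log\sigma)^{j}(\hat{f})}{j!}.
\]
The hypothesis $(\log\sigma)(\hat{f})=0$ forces $(\log\sigma)^{j}(\hat{f})=0$ for all $j\ge 1$ by the same observation, leaving only the $j=0$ term $\hat{f}$.

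The only potential subtlety, and thus the main ``obstacle'', is to make sure we are allowed to treat $\sigma$ and $\log\sigma$ as operators acting on arbitrary formal series and to pass these series manipulations through; but this is exactly the content of the framework developed in Section \ref{sec:basic}, in particular of equation (\ref{equ:fflow}) and Proposition \ref{pro:exp} (the nilpotent character of $\log\sigma$ gives Krull-convergence in each instance). Apart from that, no further ingredient is needed.
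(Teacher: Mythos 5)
Your proof is correct and coincides with the paper's own argument: the forward implication via the exponential series $\hat{f}\circ\sigma=\sum_{j\ge 0}(\log\sigma)^{j}(\hat{f})/j!$, and the converse via $\Theta(\hat{f})=0\Rightarrow\Theta^{j}(\hat{f})=0$ plugged into equation (\ref{equ:log}). Nothing to add.
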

\begin{proof}
We have
\[ \hat{f} \circ {\rm exp}(\log \sigma) = \hat{f} +  (\log \sigma) (\hat{f}) +
\frac{(\log \sigma)^{2}(\hat{f})}{2!} + \hdots  \]
Therefore $(\log \sigma) (\hat{f}) =0 $ implies $\hat{f} \circ \sigma = \hat{f}$.

Suppose $\hat{f} \circ \sigma = \hat{f}$. Denote by $\Theta$ the operator $\sigma - Id$.
We have $\Theta(\hat{f})=0$ by hypothesis and then $\Theta^{j}(\hat{f}) =0$ for any $j \in {\mathbb N}$.
We obtain $(\log \sigma) (\hat{f}) =0$ by equation (\ref{equ:log}).
\end{proof}
\section{Formal conjugacy}
Throughout this paper we work with germs of diffeomorphisms in $\cn{2}$ of the form
\[ \varphi_{\Delta,w}(x,y) = (x + y(y-x) \Delta(x,y), y + y(y-x)w(x,y))  \]
where $w,\Delta \in {\mathbb C} \{x,y\}$ and $w(0,0) \neq 0 = \Delta(0,0)$.
In this section we describe a geometrical condition for the formal class of $\varphi_{\Delta,w}$
not to be embeddable.

Next, we describe the structure of $\log \varphi_{\Delta,w}$.
\begin{defi}
\label{def:max}
Let ${\mathfrak m}_{x,y}$ and $\hat{\mathfrak m}_{x,y}$ be the
maximal ideals of the rings ${\mathbb C}\{x,y\}$ and ${\mathbb C}[[x,y]]$ respectively.
\end{defi}
\begin{lem}
\label{lem:strlog}
The formal vector field $\log \varphi_{\Delta,w}$ is of the form
\[ \log \varphi_{\Delta,w} = y(y-x) \left({
w(0,0) \frac{\partial}{\partial{y}} + h.o.t.
}\right)  \]
where $h.o.t$ stands for a formal vector field whose coefficients belong to $\hat{\mathfrak m}_{x,y}$.
\end{lem}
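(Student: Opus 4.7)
My plan is to exploit the pointwise invariance of the curves $\{y = 0\}$ and $\{y = x\}$ under $\varphi_{\Delta,w}$ in order to force $\log \varphi_{\Delta,w}$ to be divisible by $y(y-x)$, and then to read off the leading term of the quotient through a two-jet comparison. First, since the corrections $y(y-x)\Delta$ and $y(y-x) w$ have order at least $2$, one has $j^{1} \varphi_{\Delta,w} = Id$, so $\varphi_{\Delta,w} \in \diffh{u}{2}$ and Proposition \ref{pro:exp} makes $X := \log \varphi_{\Delta,w}$ a well-defined element of $\hat{\mathcal X}_{N} \cn{2}$.

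Next I would compute
\[ y \circ \varphi_{\Delta,w} = y \, (1 + (y-x) w), \qquad (y-x) \circ \varphi_{\Delta,w} = (y-x) \, (1 + y(w - \Delta)), \]
showing that the principal ideals $(y)$ and $(y-x)$ are both stable under the $\mathbb{C}$-algebra automorphism $\varphi_{\Delta,w}$. By multiplicativity the ideal $(y(y-x))$ is preserved by $\varphi_{\Delta,w}$, hence also by $\Theta := \varphi_{\Delta,w} - Id$. Since $\Theta(x) = y(y-x)\Delta$ and $\Theta(y) = y(y-x) w$ already lie in $(y(y-x))$, a direct induction gives $\Theta^{j}(x), \Theta^{j}(y) \in (y(y-x))$ for every $j \geq 1$. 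As $(y(y-x))$ is closed in the Krull topology of ${\mathbb C}[[x,y]]$, equation (\ref{equ:log}) applied to $x$ and to $y$ yields $X(x), X(y) \in (y(y-x))$, so that we may write
\[ X = y(y-x) \Bigl( \hat{\alpha} \frac{\partial}{\partial x} + \hat{\beta} \frac{\partial}{\partial y} \Bigr) \]
for uniquely determined $\hat{\alpha}, \hat{\beta} \in {\mathbb C}[[x,y]]$.

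Finally, to identify $\hat{\alpha}(0,0)$ and $\hat{\beta}(0,0)$, I would match two-jets on the two sides of $\Theta(z) = \sum_{j \geq 1} X^{j}(z)/j!$ for $z = x, y$. The coefficients $X(x), X(y)$ vanish to order at least $2$, so $X$ raises the order of any formal series by at least one, and $X^{j}(z)$ has order at least $j + 1$. The quadratic part of $\Theta(x)$ therefore coincides with that of $X(x) = y(y-x) \hat{\alpha}$; but $\Theta(x) = y(y-x) \Delta$ has no quadratic part because $\Delta(0,0) = 0$, whence $\hat{\alpha}(0,0) = 0$. Similarly the quadratic part of $\Theta(y) = y(y-x) w$ is $w(0,0) \, y(y-x)$ and must equal the quadratic part of $X(y) = y(y-x) \hat{\beta}$, forcing $\hat{\beta}(0,0) = w(0,0)$, which is exactly the announced form.

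The only step requiring a pinch of care is the passage to the Krull limit in the third paragraph: one needs the ideal $(y(y-x))$ to be closed in the $\hat{\mathfrak m}_{x,y}$-adic topology of ${\mathbb C}[[x,y]]$, which is immediate since ${\mathbb C}[[x,y]]/(y(y-x))$ is a complete Noetherian local ring. I do not anticipate a deeper obstacle.
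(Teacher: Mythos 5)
Your proof is correct, and while its first half runs parallel to the paper's, its second half takes a genuinely different route. For the divisibility $X(x),X(y)\in I:=(y(y-x))$ both arguments pass through the logarithm series (\ref{equ:log}); you secure $\Theta^{j}(x),\Theta^{j}(y)\in I$ via the explicit factorizations $y\circ\varphi_{\Delta,w}=y(1+(y-x)w)$ and $(y-x)\circ\varphi_{\Delta,w}=(y-x)(1+y(w-\Delta))$ and the resulting $\Theta$-stability of $I$, whereas the paper simply notes that $\Theta(g)=g\circ\varphi_{\Delta,w}-g$ lies in $I$ for \emph{every} $g\in{\mathbb C}[[x,y]]$ (the coordinate displacements generate an ideal inside $I$), which is stronger and makes the induction unnecessary; your version is sound but does a little more work than needed, and your appeal to Krull-closedness of $I$ is legitimate (Krull intersection in the Noetherian local quotient). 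The substantive divergence is in identifying the constant coefficients: the paper stays with the logarithm series and establishes the filtration estimates $X(I)\subset I\hat{\mathfrak m}_{x,y}$ and $\Theta(I)\subset X(I)$ through the operator identity $\Theta(g)=X\bigl(g+X\bigl(\sum_{j\geq 2}X^{j-2}(g)/j!\bigr)\bigr)$, then reads off $X(x)$ and $X(y)$ modulo $I\hat{\mathfrak m}_{x,y}$; you instead invert the series, writing $\Theta(z)=\sum_{j\geq 1}X^{j}(z)/j!$ for $z=x,y$, observe that $X$ raises order by at least one so that all terms with $j\geq 2$ have order at least $3$, and compare quadratic jets, using that the quadratic form $y(y-x)$ is nonzero to extract $\hat{\alpha}(0,0)=0$ and $\hat{\beta}(0,0)=w(0,0)$. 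Your jet comparison is more elementary and avoids the nested operator identity entirely; what the paper's heavier ideal calculus buys is reusability: essentially the same estimates reappear as equation (\ref{equ:stair}) in the proof of lemma \ref{lem:logpol}, where they are needed to control the whole polynomial family $\varphi_{\lambda\Delta,w}$, so the paper's formulation doubles as preparation for section \ref{sec:polfam}.
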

\begin{proof}
We denote by $\Theta$ the operator $\varphi_{\Delta, w} - Id$. Let $I$ be the ideal $(y(y-x))$ of
${\mathbb C}[[x,y]]$. We denote $X=\log \varphi_{\Delta,w}$. It is clear that
$\Theta ({\mathbb C}[[x,y]])$ is contained in $I$.
Thus $X({\mathbb C}[[x,y]])$ is contained in $I$ by equation (\ref{equ:log}).
Since we have $X= X(x) \partial / \partial x + X(y) \partial / \partial y$ and
$I \subset \hat{\mathfrak m}_{x,y}^{2}$ we obtain $X (I) \subset I \hat{\mathfrak m}_{x,y}$.
Given $g \in I$ we have
\[ \Theta (g) = X \left({ g + X \left({  \sum_{j=2}^{\infty} \frac{X^{j-2}(g)}{j!} }\right) }\right)
\in X(I +  X({\mathbb C}[[x,y]]) ) = X(I)  . \]
Therefore we get $\Theta(I) \subset X(I) \subset I \hat{\mathfrak m}_{x,y}$. The equation (\ref{equ:log})
implies
\[ X(x) = y(y-x) \Delta(x,y) +
\Theta \left({ \Theta \left({ \sum_{j=2}^{\infty} {(-1)}^{j+1} \frac{\Theta^{j-2} (x)}{j} }\right) }\right) \]
and then
\[ X(x) \in (y(y-x) \Delta(0,0) + I \hat{\mathfrak m}_{x,y}) + \Theta (I) =
y(y-x) \Delta(0,0) + I \hat{\mathfrak m}_{x,y} . \]
Analogously we obtain $X(y) -y(y-x) w(0,0) \in I \hat{\mathfrak m}_{x,y}$.
The lemma is a consequence of the equation
$X= X(x) \partial / \partial x + X(y) \partial / \partial y$.
\end{proof}
We denote the formal vector field $\log \varphi_{\Delta,w}/(y(y-x))$ by $L_{\Delta,w}$.
\begin{lem}
For any $\varphi_{\Delta,w}$ and $\hat{g} \in {\mathbb C}[[x]]$ there exists a unique
$\hat{f}$ in ${\mathbb C}[[x,y]]$ such that $(\log \varphi_{\Delta,w}) (\hat{f})=0$ and $\hat{f}(x,0)=\hat{g}(x)$.
\end{lem}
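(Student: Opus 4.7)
The plan is to view the equation $(\log \varphi_{\Delta,w})(\hat{f})=0$ as a formal transport equation along $L_{\Delta,w}$, with $\{y=0\}$ playing the role of a non-characteristic Cauchy hypersurface. First I would observe that $\log \varphi_{\Delta,w} = y(y-x) L_{\Delta,w}$ and that $y(y-x)$ is not a zero divisor in ${\mathbb C}[[x,y]]$; hence the required equation is equivalent to $L_{\Delta,w}(\hat{f})=0$. By Lemma \ref{lem:strlog}, we can write $L_{\Delta,w} = \hat{a}(x,y) \partial/\partial x + \hat{b}(x,y)\partial/\partial y$ with $\hat{a}(0,0)=0$ and $\hat{b}(0,0)=w(0,0)\neq 0$, so that $L_{\Delta,w}$ is transversal to $\{y=0\}$.

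Next I would solve by recursion in the $y$-degree. Write
\[
\hat{f}=\sum_{n\geq 0} \hat{f}_n(x)\, y^n, \qquad \hat{a}=\sum_{k\geq 0}\hat{a}_k(x)\, y^k, \qquad \hat{b}=\sum_{k\geq 0}\hat{b}_k(x)\, y^k,
\]
and set $\hat{f}_0=\hat{g}$, which is forced by the boundary condition. Computing the coefficient of $y^m$ in $L_{\Delta,w}(\hat{f})=0$ gives
\[
\sum_{k=0}^{m}\hat{a}_k(x)\,\hat{f}_{m-k}'(x)+\sum_{k=0}^{m}(m-k+1)\,\hat{b}_k(x)\,\hat{f}_{m-k+1}(x)=0,
\]
which, after isolating the term with $\hat{f}_{m+1}$, reads
\[
(m+1)\,\hat{b}_0(x)\,\hat{f}_{m+1}(x)=-\sum_{k=0}^{m}\hat{a}_k(x)\,\hat{f}_{m-k}'(x)-\sum_{k=1}^{m}(m-k+1)\,\hat{b}_k(x)\,\hat{f}_{m-k+1}(x).
\]
Since $\hat{b}_0(0)=w(0,0)\neq 0$, the series $\hat{b}_0$ is a unit in ${\mathbb C}[[x]]$, and $(m+1)\neq 0$, so this relation determines $\hat{f}_{m+1}\in{\mathbb C}[[x]]$ uniquely in terms of $\hat{f}_0,\ldots,\hat{f}_m$. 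By induction this produces a unique $\hat{f}\in{\mathbb C}[[x,y]]$ with the prescribed trace on $\{y=0\}$.

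There is really no serious obstacle here: the transversality of $L_{\Delta,w}$ to $\{y=0\}$, combined with the fact that $y(y-x)$ is not a zero divisor, reduces the problem to a triangular linear recursion whose pivot $(m+1)\hat{b}_0(x)$ is a unit in ${\mathbb C}[[x]]$ for every $m\geq 0$. The only subtle point is passing from $\log \varphi_{\Delta,w}$ to $L_{\Delta,w}$, which is justified once one notes the non-zero-divisor property of $y(y-x)$ in the formal power series ring.
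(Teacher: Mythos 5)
Your proof is correct and follows essentially the same route as the paper: the paper also reduces $(\log \varphi_{\Delta,w})(\hat{f})=0$ to $L_{\Delta,w}(\hat{f})=0$, uses Lemma \ref{lem:strlog} to see that $L_{\Delta,w}(y)$ is a unit, rewrites the equation as $\partial \hat{f}/\partial y = -\bigl(L_{\Delta,w}(x)/L_{\Delta,w}(y)\bigr)\,\partial \hat{f}/\partial x$, and then asserts unique formal solvability with the prescribed initial condition. The only difference is that you make explicit the $y$-adic triangular recursion (with pivot $(m+1)\hat{b}_0$, a unit) that the paper leaves implicit.
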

\begin{proof}
By lemma \ref{lem:strlog} we have that $L_{\Delta,w}(y)$ is a unit. Then
\[ (\log \varphi_{\Delta,w}) (\hat{f})=0 \Leftrightarrow \frac{\partial \hat{f}}{\partial{y}} =
- \frac{L_{\Delta,w}(x)}{L_{\Delta,w}(y)} \frac{\partial \hat{f}}{\partial{x}} . \]
As a consequence there is a unique formal solution of the previous equation fulfilling the
initial condition $\hat{f}(x,0)=\hat{g}(x)$.
\end{proof}
We want to introduce the formal invariants of $\varphi_{\Delta,w}$. The first formal invariant is
the fixed points set $Fix(\varphi_{\Delta,w})$.
\begin{pro}
\label{pro:invfix}
  Let $\tau_{1}, \tau_{2} \in \diff{}{n}$ and $\hat{\sigma} \in \diffh{}{n}$ such that
$\hat{\sigma} \circ \tau_{1} = \tau_{2} \circ \hat{\sigma}$. Then
we have $\hat{\sigma}(Fix (\tau_{1}))= Fix (\tau_{2})$.
\end{pro}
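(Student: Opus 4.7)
My plan is a direct formal calculation once $Fix$ is given the right interpretation in the formal category. Since $\tau_1, \tau_2$ are germs of diffeomorphisms but $\hat\sigma$ is only formal, I would interpret $Fix(\tau_j)$ as the set of formal curves fixed by $\tau_j$, i.e.
\[ Fix(\tau_j) = \{\hat\gamma \in (t\mathbb C[[t]])^{n} : \tau_j \circ \hat\gamma = \hat\gamma\} . \]
By the formal theorem of zeros recalled in Section~\ref{sec:basic}, this set equals $Z(\hat I_j)$ where $\hat I_j$ is the ideal of $\mathbb C[[x_1,\hdots,x_n]]$ generated by $x_i \circ \tau_j - x_i$ for $i=1,\hdots,n$, so this definition recovers the ideal-theoretic fixed-point scheme. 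The composition $\hat\sigma\circ\hat\gamma$ is well defined as a formal curve because each component of $\hat\gamma$ lies in $t\mathbb C[[t]]$ and each component $\hat\sigma(x_i)$ lies in $\hat{\mathfrak m}$, so substitution is legitimate. The expression $\hat\sigma(Fix(\tau_1))$ is then just the collection of formal curves $\hat\sigma\circ\hat\gamma$ with $\hat\gamma\in Fix(\tau_1)$.

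Given this setup, the forward inclusion is one line. Assume $\hat\gamma\in Fix(\tau_1)$, i.e. $\tau_1\circ\hat\gamma = \hat\gamma$. Using the conjugation relation I compute
\[ \tau_2 \circ (\hat\sigma\circ\hat\gamma) = (\tau_2\circ\hat\sigma)\circ\hat\gamma = (\hat\sigma\circ\tau_1)\circ\hat\gamma = \hat\sigma\circ(\tau_1\circ\hat\gamma) = \hat\sigma\circ\hat\gamma , \]
so $\hat\sigma\circ\hat\gamma\in Fix(\tau_2)$, giving $\hat\sigma(Fix(\tau_1))\subseteq Fix(\tau_2)$. Associativity of composition causes no trouble in this formal context because every object involved can be composed with formal curves centered at the origin.

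For the reverse inclusion, I invoke the formal inverse $\hat\sigma^{-1}\in\diffh{}{n}$, which exists since $j^{1}\hat\sigma$ is a linear isomorphism. Pre- and post-composing $\hat\sigma\circ\tau_1 = \tau_2\circ\hat\sigma$ by $\hat\sigma^{-1}$ yields $\hat\sigma^{-1}\circ\tau_2 = \tau_1\circ\hat\sigma^{-1}$, so the first step applied with $(\tau_1,\hat\sigma)$ replaced by $(\tau_2,\hat\sigma^{-1})$ gives $\hat\sigma^{-1}(Fix(\tau_2))\subseteq Fix(\tau_1)$, equivalent to $Fix(\tau_2)\subseteq \hat\sigma(Fix(\tau_1))$. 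Combining both inclusions proves the proposition. No substantial obstacle arises; the only conceptual point is to work consistently in the formal category, interpreting $Fix$ through formal parametrisations and trusting that all compositions make sense thanks to the vanishing of constant terms.
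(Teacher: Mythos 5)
Your proof is correct and follows essentially the same route as the paper: interpret $Fix(\tau_{j})$ through formal parametrizations, push a fixed curve forward by $\hat{\sigma}$ using the conjugacy relation in a one-line computation, and obtain the reverse inclusion by applying the same argument to $\hat{\sigma}^{(-1)}$. The paper additionally records the equivalent ideal-theoretic statement $\hat{I}_{2} \circ \hat{\sigma} = \hat{I}_{1}$ via the formal theorem of zeros, but its own remark notes this is not required for the proposition itself (and, as a minor point, your identification of the fixed curves with $Z(\hat{I}_{j})$ follows directly from the definitions of $Z$ and of the ideal generated by the $x_{i} \circ \tau_{j} - x_{i}$, with no appeal to the Nullstellensatz needed).
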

An equivalent statement is the following: Let $\hat{I}_{j} = I(Fix (\tau_{j}))$
for $j=1,2$. Then we have $\hat{I}_{2} \circ \hat{\sigma} = \hat{I}_{1}$.
\begin{proof}
  Let $\hat{\gamma} \in {\mathbb C}[[t]]^{n} \cap Z(\hat{I}_{1})$. We have
$\tau_{1} \circ \hat{\gamma}(t) = \hat{\gamma}(t)$; we obtain
\[ \hat{\sigma} \circ \tau_{1}(\hat{\gamma}(t)) = \tau_{2} \circ \hat{\sigma}(\hat{\gamma}(t)) \Rightarrow
 \tau_{2} \circ \hat{\sigma}(\hat{\gamma}(t)) =  \hat{\sigma} (\hat{\gamma}(t)) . \]
Hence $\hat{\sigma} \circ \hat{\gamma}(t)$ belongs to $Z(\hat{I}_{2})$ and then
$\hat{\sigma}(Z(\hat{I}_{1})) \subset Z(\hat{I}_{2})$. By the analogous argument applied to
$\hat{\sigma}^{(-1)}$ we obtain $Z(\hat{I}_{2}) \subset \hat{\sigma}(Z(\hat{I}_{1}))$ and then
$\hat{\sigma}(Z(\hat{I}_{1})) = Z(\hat{I}_{2})$. This is equivalent to
$I(Z(\hat{I}_{2})) \circ \hat{\sigma} = I(Z(\hat{I}_{1}))$. Since $\hat{I}_{1}$ and $\hat{I}_{2}$
are radical ideals then $\hat{I}_{2} \circ \hat{\sigma} = \hat{I}_{1}$ is a consequence of the formal theorem
of zeros.
\end{proof}
\begin{rem}
It is not required to use the formal theorem of zeros to prove the previous proposition but this proof
makes clear that the image by $\hat{\sigma}$ of a parametrization $\hat{\gamma}(t)$ of a formal curve
contained in $Fix (\tau_{1})$ is a parametrization $\hat{\sigma} \circ \hat{\gamma}(t)$ of a formal
curve contained in $Fix (\tau_{2})$.
\end{rem}
\begin{defi}
Let $a \in {\mathbb C}[[x]]$. We define
\[ \nu(a) = \sup \{ b \in {\mathbb N} \cup \{0\} : a \in (x^{b}) \}. \]
\end{defi}
\begin{lem}
\label{lem:trivial}
  Let $\varphi_{\Delta,w}$, $\tau \in \diff{}{2}$ and $\hat{\sigma} \in \diffh{}{2}$ such that
$\hat{\sigma} \circ \varphi_{\Delta,w} = \tau \circ \hat{\sigma}$. Then
\begin{enumerate}
\item $Fix (\tau)$ is an analytic set.
\item $Fix (\tau)$ has two irreducible components $f_{1}=0$ and $f_{2}=0$, both of them are smooth curves.
\item $\hat{\sigma}_{*} (\log \varphi_{\Delta,w})=\log \tau$.
\item $j^{0}(\log \tau/(f_{1}f_{2}))$ is transversal to both $f_{1}=0$ and $f_{2}=0$.
\item Let $\hat{f}$ be a primitive element of  ${\mathcal F}(\log \varphi_{\Delta,w})$.  Then
$\hat{f} \circ \hat{\sigma}^{(-1)}$ is a primitive element of ${\mathcal F}(\log \tau)$.
\item $\hat{f} \circ \hat{\sigma}^{(-1)}_{|f_{1}=0}$ and
$\hat{f} \circ \hat{\sigma}^{(-1)}_{|f_{2}=0}$ are ``injective". In other words, if $\hat{\gamma}(t)$
is a minimal parametrization of $f_{j}=0$ we have $\nu(\hat{f} \circ \hat{\sigma}^{(-1)} \circ \hat{\gamma})=1$.
\end{enumerate}
\end{lem}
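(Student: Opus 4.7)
The plan is to transport the formal invariants of $\varphi_{\Delta,w}$ through $\hat{\sigma}$ using Proposition~\ref{pro:invfix} and the structure given by Lemma~\ref{lem:strlog}. The starting observation is that since $w(0,0)\neq 0$, the analytic set $Fix(\varphi_{\Delta,w})$ has ideal $(y(y-x))$, with two smooth irreducible components $\{y=0\}$ and $\{y=x\}$ meeting transversally at the origin.

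For (1) and (2), $Fix(\tau)$ is analytic since $\tau$ is convergent. By Proposition~\ref{pro:invfix} together with the formal theorem of zeros, the completion of the radical ideal $\sqrt{I(Fix(\tau))}$ equals $(\hat{\sigma}^{-1}(y)\cdot\hat{\sigma}^{-1}(y-x))$. Both factors have nonzero and $\mathbb{C}$-independent linear parts because $\hat{\sigma}^{-1}$ is a formal diffeomorphism, so $Fix(\tau)$ has two smooth transverse formal branches. The descent from the formal decomposition to analytic smooth components $\{f_1=0\}$, $\{f_2=0\}$ uses the fact that $\mathbb{C}\{x,y\}$ is excellent, so minimal primes are preserved under completion. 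Statement (3) follows from unipotency: $j^1\varphi_{\Delta,w}=Id$ forces $j^1\tau=Id$, so $\log\tau$ is defined, and the identity $\exp(\hat{\sigma}_*X)=\hat{\sigma}\circ\exp(X)\circ\hat{\sigma}^{-1}$ together with Proposition~\ref{pro:exp} yields $\log\tau=\hat{\sigma}_*(\log\varphi_{\Delta,w})$.

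For (4), combining (3) with Lemma~\ref{lem:strlog} gives
\[ \log\tau = \bigl(y(y-x)\circ\hat{\sigma}^{-1}\bigr)\cdot\hat{\sigma}_*L_{\Delta,w}. \]
Writing $\hat{\sigma}^{-1}(y)=f_1\hat{u}_1$ and $\hat{\sigma}^{-1}(y-x)=f_2\hat{u}_2$ for formal units $\hat{u}_j$, the quotient $\log\tau/(f_1f_2)$ takes the value $\hat{u}_1(0)\hat{u}_2(0)\cdot j^1\hat{\sigma}(w(0,0)\partial/\partial y)$ at the origin, and transversality to the smooth components is preserved by the linear isomorphism $j^1\hat{\sigma}$. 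For (5), the push-forward identity yields $(\log\tau)(\hat{f}\circ\hat{\sigma}^{-1})=(\log\varphi_{\Delta,w})(\hat{f})\circ\hat{\sigma}^{-1}=0$, and any factorization $\hat{f}\circ\hat{\sigma}^{-1}=\hat{h}^k$ with $k\geq 2$ would force $\hat{f}=(\hat{h}\circ\hat{\sigma})^k$, contradicting primitivity of $\hat{f}$. For (6), I first establish $\nu(\hat{f}(x,0))=\nu(\hat{f}(x,x))=1$: if $\hat{f}(x,0)=x^k\hat{u}(x)$ with $k\geq 2$ and $\hat{u}(0)\neq 0$, take the formal $k$-th root $\hat{h}(x)=x\hat{u}(x)^{1/k}$, lift it to a first integral $\hat{f}_1$ via the previous lemma, and conclude $\hat{f}=\hat{f}_1^k$ by uniqueness, contradicting primitivity. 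Then for a minimal parametrization $\hat{\gamma}$ of $f_j=0$, the composition $\hat{\sigma}^{-1}\circ\hat{\gamma}$ is a minimal parametrization of $\{y=0\}$ or $\{y=x\}$, so $\hat{f}\circ\hat{\sigma}^{-1}\circ\hat{\gamma}$ inherits order $1$.

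The main obstacle is part (2): the formal-to-analytic descent ensuring that the two smooth formal branches of $Fix(\tau)$ come from genuine smooth analytic components. Once this analytic bridge is secure, the remaining items reduce to formal bookkeeping built on the push-forward identity $\log\tau=\hat{\sigma}_*\log\varphi_{\Delta,w}$ and the preceding uniqueness lemma for first integrals.
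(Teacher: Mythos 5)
Your proof is correct and its skeleton matches the paper's: both rest on Proposition \ref{pro:invfix} (so that $Fix(\tau)$ is the $\hat{\sigma}$-image of $\{y=0\}\cup\{y=x\}$), on uniqueness of the infinitesimal generator for (3) and (5) via $\exp(\hat{\sigma}_{*}X)=\hat{\sigma}\circ\exp(X)\circ\hat{\sigma}^{(-1)}$, and on Lemma \ref{lem:strlog} plus $[y(y-x)]\circ\hat{\sigma}^{(-1)}=f_{1}f_{2}\hat{u}$ for (4), where your value $\hat{u}_{1}(0)\hat{u}_{2}(0)\,j^{1}\hat{\sigma}(w(0,0)\partial/\partial y)$ at the origin and the pairwise non-collinearity of $(1,0)$, $(1,1)$, $(0,1)$ under $j^{1}\hat{\sigma}$ is exactly the paper's computation. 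You diverge in two places. For (2), you argue at the level of ideals: complete $I(Fix(\tau))$, identify it with $\bigl((y\circ\hat{\sigma}^{(-1)})\cdot((y-x)\circ\hat{\sigma}^{(-1)})\bigr)$, and descend the factorization using excellence of ${\mathbb C}\{x,y\}$; the paper instead works with parametrizations, noting that a minimal analytic parametrization of $f_{1}=0$ has the form $\hat{\sigma}(\hat{a}(t),0)$ with $\nu(\hat{a})=1$ and nonvanishing derivative at $0$, which gives smoothness directly and more elementarily (your route buys generality, the paper's stays self-contained). For (6), you rule out $\nu\geq 2$ by lifting a formal $k$-th root of the boundary value to a first integral and invoking uniqueness; the paper instead normalizes $\hat{f}(x,0)=x$, uses that the primitive elements form $\diffh{}{}\circ\hat{f}$, and reads off $j^{1}\hat{f}=x$ from $L_{\Delta,w}(\hat{f})=0$, which settles $\nu(\hat{f}(x,0))=\nu(\hat{f}(x,x))=1$ simultaneously.

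One small repair is needed in your (6): the lifting lemma in the paper prescribes initial data only on $\{y=0\}$, so your root-lifting argument as written establishes $\nu(\hat{f}(x,0))=1$ but not $\nu(\hat{f}(x,x))=1$. You must either prove the analogous uniqueness statement for data on $\{y=x\}$ (immediate after the change of variables $(x,y)\mapsto(x,y-x)$, since $L_{\Delta,w}(y-x)$ is a unit by Lemma \ref{lem:strlog} and $\Delta(0,0)=0$), or shortcut as the paper does: $L_{\Delta,w}(\hat{f})=0$ at the origin forces $\partial\hat{f}/\partial y(0,0)=0$, so $\nu(\hat{f}(x,0))=1$ already implies $j^{1}\hat{f}=cx$ with $c\neq 0$ and hence $\nu(\hat{f}(x,x))=1$. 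With this one line added, your argument is complete.
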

\begin{proof}
Condition (1) is obvious.  Conditions (3) and (5) can be deduced of the uniqueness
of the infinitesimal generator.

Denote $\hat{\sigma}(x,y) = (\hat{\sigma}_{1}(x,y), \hat{\sigma}_{2}(x,y))$
and $\partial \hat{\sigma}/\partial z= (\partial \hat{\sigma}_{1}/\partial z, \partial \hat{\sigma}_{2}/ \partial z)$
for $z=x$ or $z=y$.
Since by prop. \ref{pro:invfix} we have $\hat{\sigma}(Fix(\varphi_{\Delta,w})) = Fix (\tau)$
then $\tau$ has two irreducible components $f_{1}=0$ and $f_{2}=0$
corresponding respectively to $\hat{\sigma}(y=0)$ and $\hat{\sigma}(y=x)$.
Moreover
\[ \hat{\sigma}(t,0)=(\hat{\sigma}_{1}(t,0), \hat{\sigma}_{2}(t,0)) \ {\rm and} \
\hat{\sigma}(t,t)=(\hat{\sigma}_{1}(t,t), \hat{\sigma}_{2}(t,t)) \]
are formal parametrizations of $f_{1}=0$ and $f_{2}=0$ respectively.
Since $j^{1} \hat{\sigma}$ is a linear isomorphism then the vectors
\[ \frac{\partial (\hat{\sigma}(t,0))}{\partial t} (0) = \frac{\partial \hat{\sigma}}{\partial x}(0,0)
\ {\rm and} \  \frac{\partial (\hat{\sigma}(t,t))}{\partial t} (0) =
\frac{\partial \hat{\sigma}}{\partial x}(0,0) + \frac{\partial \hat{\sigma}}{\partial y}(0,0) \]
are linear independent and then different than $(0,0)$.
An analytic minimal parametrization of $f_{1}=0$ is of the form
$\hat{\sigma}(\hat{a}(t),0)$ for some $\hat{a} \in {\mathbb C}[[t]]$ such that
$\nu(\hat{a})=1$. We obtain
\[  \frac{\partial (\hat{\sigma}(\hat{a}(t),0))}{\partial t} (0) =
\frac{\partial \hat{\sigma}}{\partial x}(0,0) \frac{\partial \hat{a}}{\partial t}(0) \neq (0,0) .\]
Thus $f_{1}=0$ is a smooth curve which is tangent at the origin to the line generated by
$(\partial \hat{\sigma}/\partial x)(0,0)$. Analogously the curve $f_{2}=0$
is smooth and tangent at the origin to the line generated by
$(\partial \hat{\sigma}/\partial x)(0,0)+ (\partial \hat{\sigma}/\partial y)(0,0)$.
Condition (3) and lemma \ref{lem:strlog} imply
\[ \log \tau = [ y (y-x)] \circ \hat{\sigma}^{-1}
\left({ w(0,0) \left({
\frac{\partial \hat{\sigma}_{1}}{\partial y}(0,0) \frac{\partial}{\partial x} +
\frac{\partial \hat{\sigma}_{2}}{\partial y}(0,0) \frac{\partial}{\partial y}
}\right) + h.o.t.  }\right) \]
where h.o.t stands for a formal vector field whose coefficients belong to $\hat{\mathfrak m}_{x,y}$.
The power series $[ y (y-x)] \circ \hat{\sigma}^{-1}$ is
of the form $f_{1} f_{2} \hat{u}$ where $\hat{u}$ is a unit of ${\mathbb C}[[x,y]]$.
We have
\[ j^{0} \left({ \frac{\log \tau}{f_{1}f_{2}} }\right) = \hat{u}(0,0) w(0,0)
\left({ \frac{\partial \hat{\sigma}_{1}}{\partial y}(0,0) \frac{\partial}{\partial x} +
\frac{\partial \hat{\sigma}_{2}}{\partial y}(0,0) \frac{\partial}{\partial y} }\right). \]
We obtain condition (4) since the vectors
\[ \frac{\partial \hat{\sigma}}{\partial y}(0,0), \  \frac{\partial \hat{\sigma}}{\partial x}(0,0)
\ {\rm and} \ \frac{\partial \hat{\sigma}}{\partial x}(0,0) +  \frac{\partial \hat{\sigma}}{\partial y}(0,0) \]
are pairwise non-collinear.

Condition (6) is equivalent to prove that
$\nu(\hat{f}(x,0))=\nu(\hat{f}(x,x))=1$ for every primitive
$\hat{f}$ in ${\mathcal F}(\log \varphi_{\Delta,w})$. We can suppose that
$\hat{f}(x,0)=x$ since then $\hat{f}$ is primitive and the set of
primitive elements of ${\mathcal F}(\log \varphi_{\Delta,w})$ is $\diffh{}{}
\circ \hat{f}$. The relation $L_{\Delta,w}(\hat{f}) = 0$ implies $j^{1} \hat{f}=x$. Therefore
$\nu(\hat{f}(x,0))=\nu(\hat{f}(x,x))=1$.
\end{proof}
Consider a couple $(S,g)$ where $S$ is a germ of analytic set and $g$ is a function on $S$.
We typically consider a couple $(\gamma, |Jac \  \varphi_{\Delta,w}|_{|\gamma})$ where $\gamma$ is a germ of
curve contained in $Fix (\varphi_{\Delta,w})$ and $|Jac \  \varphi_{\Delta,w}|$ is the determinant of the jacobian
matrix.
We denote ${\mathcal J}_{\Delta,w} =|Jac \  \varphi_{\Delta,w}|$ and
${\mathcal J}_{\tau} = |Jac \  \tau|$.
\begin{pro}
The couples
\[ (y=0,{({\mathcal J}_{\Delta,w})}_{|y=0}) \ \  {\rm and} \ \  (y=x,{({\mathcal J}_{\Delta,w})}_{|y=x}) \]
are formal invariants of $\varphi_{\Delta,w}$.
\end{pro}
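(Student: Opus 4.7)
The plan is to interpret ``formal invariant'' in the following concrete sense: if $\hat{\sigma} \in \diffh{}{2}$ conjugates $\varphi_{\Delta,w}$ to some $\tau \in \diff{}{2}$, i.e.\ $\hat{\sigma}\circ\varphi_{\Delta,w}=\tau\circ\hat{\sigma}$, then by Lemma \ref{lem:trivial} the image $\hat{\sigma}(y=0)$ is the smooth component $f_{1}=0$ of $Fix(\tau)$ and $\hat{\sigma}(y=x)$ is the component $f_{2}=0$; one must then show that the restriction of $\mathcal{J}_{\tau}$ to $f_{j}=0$, pulled back by $\hat{\sigma}$, coincides with the restriction of $\mathcal{J}_{\Delta,w}$ to $y=0$ (resp.\ $y=x$). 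In other words, the couple is invariant modulo the reparametrization produced by $\hat{\sigma}$.

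The first step is to differentiate the conjugacy relation. Applying the chain rule for the jacobian determinant (which is valid at the formal level since $\hat{\sigma}\in\diffh{}{2}$) to both sides of $\hat{\sigma}\circ\varphi_{\Delta,w}=\tau\circ\hat{\sigma}$ gives
\[
(\mathcal{J}_{\tau}\circ\hat{\sigma})\cdot\mathcal{J}_{\hat{\sigma}}
 = (\mathcal{J}_{\hat{\sigma}}\circ\varphi_{\Delta,w})\cdot\mathcal{J}_{\Delta,w}
\]
as an identity in ${\mathbb C}[[x,y]]$.

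The second step exploits the explicit form of $\varphi_{\Delta,w}$. A direct substitution gives $\varphi_{\Delta,w}(x,0)=(x,0)$ and $\varphi_{\Delta,w}(x,x)=(x,x)$, so $\varphi_{\Delta,w}$ restricts to the identity on both $y=0$ and $y=x$. Consequently $\mathcal{J}_{\hat{\sigma}}\circ\varphi_{\Delta,w}$ coincides with $\mathcal{J}_{\hat{\sigma}}$ when evaluated on either of these two curves. Since $\mathcal{J}_{\hat{\sigma}}(0,0)\neq 0$, the factor $\mathcal{J}_{\hat{\sigma}}$ is a unit in ${\mathbb C}[[x,y]]$ and can be cancelled, yielding
\[
(\mathcal{J}_{\tau}\circ\hat{\sigma})_{|y=0}=(\mathcal{J}_{\Delta,w})_{|y=0},
\qquad
(\mathcal{J}_{\tau}\circ\hat{\sigma})_{|y=x}=(\mathcal{J}_{\Delta,w})_{|y=x}.
\]

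The third step is simply to record the geometric meaning: together with the identifications $\hat{\sigma}(y=0)=\{f_{1}=0\}$ and $\hat{\sigma}(y=x)=\{f_{2}=0\}$ from Lemma \ref{lem:trivial}, the above equalities say that the couples $(y=0,(\mathcal{J}_{\Delta,w})_{|y=0})$ and $(\{f_{1}=0\},(\mathcal{J}_{\tau})_{|f_{1}=0})$ correspond through $\hat{\sigma}$ (and similarly for the second curve), which is the asserted formal invariance. There is no genuine obstacle here: the entire argument reduces to the multiplicativity of the jacobian under composition together with the pointwise fixedness of $\{y=0\}\cup\{y=x\}$ under $\varphi_{\Delta,w}$, all of which are formal identities not requiring convergence of $\hat{\sigma}$.
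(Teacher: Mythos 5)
Your proof is correct and is essentially the paper's own argument: both rest on the chain-rule identity $({\mathcal J}_{\hat{\sigma}} \circ \varphi_{\Delta,w})\, {\mathcal J}_{\Delta,w} = ({\mathcal J}_{\tau} \circ \hat{\sigma})\, {\mathcal J}_{\hat{\sigma}}$ together with the fact that $\varphi_{\Delta,w}$ fixes $y=0$ and $y=x$ pointwise, after which the unit ${\mathcal J}_{\hat{\sigma}}$ cancels along each curve to give ${\mathcal J}_{\Delta,w}$ restricted to the curve equals ${\mathcal J}_{\tau} \circ \hat{\sigma}$ restricted to the curve. The only cosmetic difference is that the paper phrases the restriction via a parametrization $\gamma(t)$ of the fixed curve while you restrict directly and cite Lemma \ref{lem:trivial} for the identification of the image components, which is a harmless elaboration.
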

\begin{proof}
Suppose $\hat{\sigma} \circ \varphi_{\Delta,w} = \tau \circ \hat{\sigma}$
for $\tau \in \diffh{}{2}$ and $\hat{\sigma} \in \diffh{}{2}$. We have
\[ ({\mathcal J}_{\hat{\sigma}} \circ \varphi_{\Delta,w}) {\mathcal J}_{\Delta,w} =
({\mathcal J}_{\tau} \circ \hat{\sigma}) {\mathcal J}_{\hat{\sigma}} \]
by the chain rule. Let ${\gamma}(t) \in (t {\mathbb C}\{t\})^{2}$ be a parametrization
of either $y=0$ or $y=x$. We have $\varphi_{\Delta,w} \circ \gamma(t)= \gamma(t)$; that implies
\[ {\mathcal J}_{\Delta,w} \circ \gamma(t)=  {\mathcal J}_{\tau} \circ (\hat{\sigma} \circ \gamma(t)) \]
as we wanted to prove.
\end{proof}
\begin{defi}
We define  $Tr_{\Delta,w}: (y=0) \to (y=x)$ as the unique formal mapping such that
$\hat{f}_{\Delta,w} \circ Tr_{\Delta,w} = \hat{f}_{\Delta,w}$
where $\hat{f}_{\Delta,w}$ is a primitive formal first integral of $\log \varphi_{\Delta,w}$.
By condition (6) in lemma \ref{lem:trivial} we have that $\hat{f}_{\Delta,w}(x,0)$ and $\hat{f}_{\Delta,w}(x,x)$
belong to $\diffh{}{}$. As a consequence
\[ Tr_{\Delta,w}(x,0) = ({(\hat{f}_{\Delta,w}(x,x))}^{(-1)} \circ \hat{f}_{\Delta,w}(x,0) ,
{(\hat{f}_{\Delta,w}(x,x))}^{(-1)} \circ \hat{f}_{\Delta,w}(x,0)) \]
is the expression of $Tr_{\Delta,w}$ in coordinates.
The mapping $Tr_{\Delta,w}$ does not depend on the choice of $\hat{f}_{\Delta,w}$. We call $Tr_{\Delta,w}$ the
{\it transport mapping}. If $\log \varphi_{\Delta,w}$ is a germ of vector field then $Tr_{\Delta,w}(x,0)$
is the only point in $y=x$ contained in the same trajectory of $L_{\Delta,w}$ than $(x,0)$.
\end{defi}
\begin{pro}
\label{pro:tramapfor}
  The transport mapping $Tr_{\Delta,w}$ associated to a diffeomorphism $\varphi_{\Delta,w}$ is a formal invariant.
\end{pro}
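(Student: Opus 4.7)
The statement to prove is that if $\hat{\sigma}\in\diffh{}{2}$ and $\tau\in\diff{}{2}$ satisfy $\hat{\sigma}\circ\varphi_{\Delta,w}=\tau\circ\hat{\sigma}$, then $\hat{\sigma}$ intertwines $Tr_{\Delta,w}$ with an analogously defined transport mapping $Tr_{\tau}$ between the two irreducible components $f_{1}=0$ and $f_{2}=0$ of $Fix(\tau)$ provided by Lemma \ref{lem:trivial}. Concretely, the plan is to prove the identity
\[
\hat{\sigma}\circ Tr_{\Delta,w}=Tr_{\tau}\circ\hat{\sigma}_{|y=0}.
\]

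The first step is to check that both $Tr_{\Delta,w}$ and $Tr_{\tau}$ are well defined independently of the choice of primitive first integral. By Mattei-Moussu, any two primitive first integrals $\hat{f}_{1},\hat{f}_{2}$ of the same infinitesimal generator are related by $\hat{f}_{2}=\hat{h}\circ\hat{f}_{1}$ with $\hat{h}\in\diffh{}{1}$, so the defining equation $\hat{f}\circ T=\hat{f}$ is unchanged under such a substitution. Combined with condition (6) of Lemma \ref{lem:trivial}, which asserts that the restrictions of a primitive first integral to each smooth fixed curve lie in $\diffh{}{1}$, this yields the unambiguous definitions of $Tr_{\Delta,w}$ and $Tr_{\tau}$ and the required inverses of those restrictions.

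The second step is to transport first integrals along the conjugacy. By condition (5) of Lemma \ref{lem:trivial} the series $\hat{f}_{\tau}:=\hat{f}_{\Delta,w}\circ\hat{\sigma}^{(-1)}$ is a primitive first integral of $\log\tau$. Using this specific choice to define $Tr_{\tau}$, the candidate $\hat{\sigma}\circ Tr_{\Delta,w}\circ\hat{\sigma}^{(-1)}$, seen as a formal mapping from $\hat{\sigma}(y=0)$ to $\hat{\sigma}(y=x)$, satisfies
\[
\hat{f}_{\tau}\circ\hat{\sigma}\circ Tr_{\Delta,w}\circ\hat{\sigma}^{(-1)}=\hat{f}_{\Delta,w}\circ Tr_{\Delta,w}\circ\hat{\sigma}^{(-1)}=\hat{f}_{\Delta,w}\circ\hat{\sigma}^{(-1)}=\hat{f}_{\tau}.
\]
Invoking the uniqueness of the solution of $\hat{f}_{\tau}\circ T=\hat{f}_{\tau}$ (guaranteed by the injectivity of $\hat{f}_{\tau}$ on each smooth component, i.e.\ condition (6) of Lemma \ref{lem:trivial} applied to $\tau$), we obtain $Tr_{\tau}=\hat{\sigma}\circ Tr_{\Delta,w}\circ\hat{\sigma}^{(-1)}$, which is the desired invariance.

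The main obstacle, such as it is, lies entirely in the preparatory work already done in Lemma \ref{lem:trivial}: one needs the two fixed curves of $\tau$ to be smooth and transverse to $\log\tau/(f_{1}f_{2})$ (conditions (2) and (4)) in order to make the target objects meaningful, the compatibility between first integrals (condition (5)) in order to pull back $\hat{f}_{\Delta,w}$, and the injectivity of its restrictions (condition (6)) in order to guarantee uniqueness. Once all of those are available the proposition reduces to the short chain of equalities displayed above, with no further analytic input required.
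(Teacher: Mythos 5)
Your proof is correct and follows essentially the same route as the paper: both pull back the chosen primitive first integral via $\hat{\sigma}^{(-1)}$ using condition (5) of Lemma \ref{lem:trivial}, apply the defining relation $\hat{f}_{\Delta,w}\circ Tr_{\Delta,w}=\hat{f}_{\Delta,w}$, and conclude $Tr_{\tau}\circ\hat{\sigma}=\hat{\sigma}\circ Tr_{\Delta,w}$ on $y=0$ from the uniqueness of $Tr_{\tau}$ guaranteed by condition (6). Your extra paragraph checking independence of the choice of primitive first integral via Mattei-Moussu is a point the paper delegates to its definition of the transport mapping, so no substantive difference remains.
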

  Suppose $\hat{\sigma} \circ \varphi_{\Delta,w} = \tau \circ \hat{\sigma}$ for $\tau \in \diff{}{2}$ and
$\hat{\sigma} \in \diffh{}{2}$. In general the mapping $\tau$ is not necessarily of the form $\varphi_{\Delta',w'}$;
it is necessary to explain what we mean by the transport mapping of $\tau$.
By proposition \ref{pro:invfix} the formal curves
$\gamma_{1}= \hat{\sigma}(y=0)$ and $\gamma_{2}=\hat{\sigma}(y=x)$ are in fact analytic.
  We define $Tr_{\tau}: \gamma_{1} \to \gamma_{2}$ as the unique formal mapping such that
$\hat{g} \circ Tr_{\tau} = \hat{g}$ for every primitive
$\hat{g}$ in ${\mathcal F}(\log \tau)$. Then $Tr_{\tau}$ is well-defined by lemma \ref{lem:trivial}.
\begin{proof}
We keep the notations in the previous paragraph. We want to prove the equality
$(Tr_{\tau}  \circ \hat{\sigma} )(x,0) = (\hat{\sigma} \circ Tr_{\Delta,w}) (x,0)$.
We choose a primitive $\hat{f}_{\Delta,w} \in {\mathcal F}(\log \varphi_{\Delta,w})$;
the series $\hat{g} = \hat{f}_{\Delta,w} \circ \hat{\sigma}^{(-1)}$ is a primitive element of ${\mathcal F}(\log \tau)$
(lemma \ref{lem:trivial}).  Since $\hat{f}_{\Delta,w}(x,0) = \hat{f}_{\Delta,w}( Tr_{\Delta,w} (x,0))$
by definition of $Tr_{\Delta,w}$ then we obtain
\[ (\hat{f}_{\Delta,w} \circ \hat{\sigma}^{(-1)}) (\hat{\sigma} (x,0)) = (\hat{f}_{\Delta,w} \circ \hat{\sigma}^{(-1)})
(\hat{\sigma}(Tr_{\Delta,w} (x,0)))  . \]
The definition of $Tr_{\tau}$ implies $Tr_{\tau} (\hat{\sigma} (x,0)) = \hat{\sigma}(Tr_{\Delta,w} (x,0))$
as we wanted to prove.
\end{proof}
\begin{defi}
The formal class of a unipotent diffeomorphism $\tau$ is embeddable if
$\log \tau$ is formally conjugated
to a convergent germ of vector field.
\end{defi}
Next we introduce an obstruction to the embeddability of a formal class.
\begin{pro}
\label{pro:tracon}
  Suppose that there exist $X \in {\mathcal X}_{N} \cn{2}$ and $\hat{\sigma}$ in $\diffh{}{2}$
such that $\hat{\sigma} \circ \varphi_{\Delta,w} = {\rm exp}(X) \circ \hat{\sigma}$. Then
$Tr_{\Delta,w}$ is a convergent mapping.
\end{pro}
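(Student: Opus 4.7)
The plan is to prove the proposition in three steps by upgrading the restrictions of $\hat{\sigma}$ to the two fixed curves from purely formal maps to analytic ones; once this is done, the conclusion follows from the formal invariance of the transport mapping (Proposition \ref{pro:tramapfor}) combined with the analyticity of $Tr_{\exp(X)}$.

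First, Lemma \ref{lem:trivial} applied to $\tau = \exp(X)$ yields two smooth analytic fixed curves $\{f_{1}=0\}$ and $\{f_{2}=0\}$, and shows that $Y := X/(f_{1}f_{2})$ is a germ of regular analytic vector field transversal to both. By Mattei--Moussu, $X$ has an analytic primitive first integral $g$, whose restrictions to $\{f_{j}=0\}$ are analytic local diffeomorphisms (Lemma \ref{lem:trivial}(6)). Since $Tr_{\exp(X)}$ sends $p\in\{f_{1}=0\}$ to the unique point $q\in\{f_{2}=0\}$ with $g(q)=g(p)$, it is analytic.

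The key step is to prove that $\hat{\sigma}|_{y=0}$ and $\hat{\sigma}|_{y=x}$ are themselves analytic. Fix analytic parametrizations $p_{j}:(\mathbb{C},0)\to\{f_{j}=0\}$ and write $\hat{\sigma}(x,0)=p_{1}(\hat{u}(x))$, $\hat{\sigma}(s,s)=p_{2}(\hat{v}(s))$ with $\hat{u},\hat{v}$ \emph{a priori} formal diffeomorphisms of $(\mathbb{C},0)$. A direct computation of the Jacobian of $\varphi_{\Delta,w}$ shows that its normal multiplier along $\{y=0\}$ at $(x,0)$ is
\[
N_{1}(x) := 1 - x\,w(x,0),
\]
and along $\{y=x\}$ at $(s,s)$ it is
\[
N_{2}(s) := 1 + s\,(w(s,s)-\Delta(s,s));
\]
both are convergent, and the assumptions $w(0)\neq 0$, $\Delta(0)=0$ yield $N_{j}'(0)\neq 0$. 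On the $\tau$-side, the normal multiplier of $\tau$ along $\{f_{j}=0\}$ is an analytic function on that curve (since $X = f_{1}f_{2}Y$ makes $D\tau(p)=\exp(DX(p))$ depend analytically on $p$); in the parametrization $p_{j}$ it becomes an analytic germ $M_{j}\in\mathbb{C}\{u\}$ with $M_{j}(0)=1$. Since the multiplier at a fixed point is a conjugacy invariant, the relation $\hat{\sigma}\circ\varphi_{\Delta,w}=\tau\circ\hat{\sigma}$ forces the formal identities
\[
M_{1}(\hat{u}(x)) = N_{1}(x) \qquad\text{and}\qquad M_{2}(\hat{v}(s)) = N_{2}(s).
\]
Differentiating at $0$ and using $\hat{u}'(0),\hat{v}'(0)\neq 0$, we deduce $M_{j}'(0)\neq 0$ for $j=1,2$, so each $M_{j}-1$ is an analytic germ of diffeomorphism $(\mathbb{C},0)\to(\mathbb{C},0)$ and we may solve
\[
\hat{u} = (M_{1}-1)^{-1}\circ(N_{1}-1), \qquad \hat{v} = (M_{2}-1)^{-1}\circ(N_{2}-1),
\]
each being a composition of two convergent germs. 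Hence $\hat{\sigma}|_{y=0}=p_{1}\circ\hat{u}$ and $\hat{\sigma}|_{y=x}=p_{2}\circ\hat{v}$ are analytic.

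The conclusion is now immediate: the identity $\hat{\sigma}|_{y=x}\circ Tr_{\Delta,w}=Tr_{\exp(X)}\circ\hat{\sigma}|_{y=0}$ extracted from the proof of Proposition \ref{pro:tramapfor} realizes $Tr_{\Delta,w}$ as a composition of three analytic germs, hence $Tr_{\Delta,w}$ is convergent. The main obstacle is the analyticity step for $\hat{u}$ and $\hat{v}$: a priori these formal diffeomorphisms inherit the full divergence of $\hat{\sigma}$, and what rescues them is the normal-multiplier identity, together with the non-degeneracy $N_{j}'(0)\neq 0$ which is built into the hypothesis $w(0)\neq 0$ defining the family $\varphi_{\Delta,w}$.
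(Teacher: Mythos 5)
Your proof is correct and follows essentially the same route as the paper: reduce via the formal invariance of the transport mapping (Proposition \ref{pro:tramapfor}) to showing that $\hat{\sigma}(x,0)$ and $\hat{\sigma}(x,x)$ are convergent, then recover the formal reparametrizations as compositions of convergent germs by inverting an analytic invariant function on the fixed curves with nonvanishing derivative at $0$. The only cosmetic difference is that you use the normal multiplier where the paper uses the Jacobian determinant ${\mathcal J}$ restricted to the fixed curves; since the tangential eigenvalue is $1$ along both curves these two invariants coincide, so the arguments are the same.
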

\begin{proof}
The transport mapping $Tr_{\Delta,w}$ maps $y=0$ to $y=x$, thus it satisfies
$Tr_{\Delta,w}(x,0) \equiv (\hat{a}(x), \hat{a}(x))$ for some $\hat{a} \in {\mathbb C}[[x]]$.
It suffices to prove that $\hat{a}$ belongs to ${\mathbb C} \{ x \}$.
The transport mapping is a formal invariant (prop. \ref{pro:tramapfor}), hence we have
$\hat{\sigma} \circ Tr_{\Delta,w}(x,0) = Tr_{{\rm exp}(X)} \circ \hat{\sigma}(x,0)$. The previous
equation implies
\[   \hat{a}(x) = (\hat{\sigma}(x,x) )^{-1} \circ Tr_{{\rm exp}(X)} \circ \hat{\sigma}(x,0)   . \]
Since $Tr_{\rm exp(X)}$ is convergent
then it suffices to prove that $\hat{\sigma}(x,0)$ and $\hat{\sigma}(x,x)$
belong to ${\mathbb C}\{ x \}^{2}$.

 We have ${\mathcal J}_{\Delta,w} = 1 + (2y-x) w(0,0) + h.o.t$. Therefore the function
 ${({\mathcal J}_{\Delta,w})}_{|y=0}$ is injective. Consider a convergent minimal parametrization
 $\eta(x)$ of $\hat{\sigma}(y=0)$; there exists $\hat{h} \in \diffh{}{}$ such that
 $\hat{\sigma}(x,0)= \eta \circ \hat{h}(x)$. Since
 ${\mathcal J}_{{\rm exp}(X)} \circ \hat{\sigma}(x,0) ={\mathcal J}_{\Delta,w}(x,0)$ then
 \[ \frac{\partial}{\partial{x}}
 {\left({
 {\mathcal J}_{{\rm exp}(X)} \circ \eta(x)
 }\right)}(0) = - \frac{w(0,0)}{\partial{\hat{h}}/{\partial{x}}(0)} \neq 0 . \]
As a consequence
\[ \hat{h} = ({\mathcal J}_{{\rm exp}(X)} \circ \eta(x) -1)^{(-1)} \circ ({\mathcal J}_{\Delta,w}(x,0)-1) \]
belongs to $\diff{}{}$. That implies $\hat{\sigma}(x,0) =\eta \circ \hat{h} \in {\mathbb C}\{x\}^{2}$.
The proof for $\hat{\sigma}(x,x)$ is analogous.
\end{proof}
\begin{rem}
In order to find a unipotent diffeomorphism whose formal class is non-embeddable
it suffices to exhibit $\varphi_{\Delta,w}$ such that $Tr_{\Delta,w}$ is divergent.
\end{rem}
\begin{rem}
We do not prove it in this paper but a diffeomorphism $\varphi_{\Delta,w}$ such that
$Tr_{\Delta,w}$ is an analytic mapping has embeddable formal class. In particular a diffeomorphism
$\varphi_{0,w} = (x, y + y(y-x)w(x,y))$ has embeddable formal class.
\end{rem}
\section{Polynomial families}
\label{sec:polfam}
Consider the family
\[ \varphi_{\lambda \Delta, w}= (x + \lambda y (y-x) \Delta(x,y), y + y(y-x)w(x,y)) \]
where $w(0,0) \neq 0 = \Delta(0,0)$ and $\lambda \in {\mathbb C}$.  It is affine on $\lambda$.
We denote by $\hat{f}_{\lambda}$ the only element of ${\mathcal F}(\log \varphi_{\lambda \Delta,w})$ such that
$\hat{f}_{\lambda}(x,0)=x$. The transport mapping $Tr_{\lambda \Delta,w}$ satisfies
\[ Tr_{\lambda \Delta, w}(x,0)   = ({(\hat{f}_{\lambda}(x,x))}^{(-1)} \circ \hat{f}_{\lambda}(x,0)  ,
{(\hat{f}_{\lambda}(x,x))}^{(-1)} \circ \hat{f}_{\lambda}(x,0) ) \]
and then $Tr_{\lambda \Delta, w}(x,0)     = ({(\hat{f}_{\lambda}(x,x))}^{(-1)}  , {(\hat{f}_{\lambda}(x,x))}^{(-1)})$.
As a consequence $Tr_{\lambda \Delta,w}$ is convergent if and only if
$\hat{f}_{\lambda}(x,x) \in {\mathbb C}\{x\}$.
\begin{lem}
\label{lem:logpol}
We have
\[ \frac{\log \varphi_{\lambda \Delta,w}}{y(y-x)} = \left({ \sum_{0 \leq k,l} a_{k,l}^{1}(\lambda) {x}^{k} {y}^{l}
}\right) \frac{\partial}{\partial{x}}
+ \left({
\sum_{0 \leq k,l} a_{k,l}^{2}(\lambda) {x}^{k} {y}^{l}
}\right) \frac{\partial}{\partial{y}}  \]
where $a_{k,l}^{j} \in {\mathbb C} [ \lambda ]$ for all $k,l \geq 0$ and $j \in \{1,2\}$.
\end{lem}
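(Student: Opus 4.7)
The plan is to exploit the Krull-convergent expansion
\[
\log \varphi_{\lambda \Delta,w}(g) = \sum_{j=1}^{\infty} \frac{(-1)^{j+1}}{j}\Theta^{j}(g),
\qquad \Theta := \varphi_{\lambda \Delta,w} - Id,
\]
from equation (\ref{equ:log}), and to track the $\lambda$-dependence monomial by monomial for $g=x$ and $g=y$.

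The first step is to observe that $\Theta$ extends to a $\mathbb{C}[\lambda]$-linear endomorphism of $\mathbb{C}[\lambda][[x,y]]$. Indeed, for any monomial $x^a y^b$,
\[
\Theta(x^a y^b) = (x + \lambda y(y-x) \Delta(x,y))^a (y + y(y-x)w(x,y))^b - x^a y^b
\]
is manifestly polynomial in $\lambda$ of degree at most $a$, with coefficients in $\mathbb{C}\{x,y\}$, and lies in the ideal $(y(y-x))$. Moreover, a direct inspection (of the same flavour as the computation in the proof of lemma \ref{lem:strlog}) shows that $\Theta(\hat{\mathfrak m}_{x,y}^{n}) \subset \hat{\mathfrak m}_{x,y}^{n+1}$ for every $n \geq 1$, since any nontrivial contribution carries at least one extra factor of $y(y-x)$ times $\lambda \Delta$ or $w$. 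Thus, for each fixed $(k,l)$, only finitely many $x^a y^b$ contribute to the coefficient of $x^k y^l$ in $\Theta(g)$ for $g \in \mathbb{C}[\lambda][[x,y]]$, and that coefficient is a genuine polynomial in $\lambda$. Iterating, every $\Theta^{j}(x)$ and $\Theta^{j}(y)$ stays in $\mathbb{C}[\lambda][[x,y]]$.

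The second step uses the same order-raising estimate to bound the number of summands that contribute to each coefficient of $\log\varphi_{\lambda\Delta,w}(x)$ and $\log\varphi_{\lambda\Delta,w}(y)$: since $\Theta^{j}(x), \Theta^{j}(y) \in \hat{\mathfrak m}_{x,y}^{j+1}$, only indices $j < k+l$ contribute to the coefficient of $x^k y^l$. Summing the finitely many polynomial-in-$\lambda$ contributions shows that both $\log\varphi_{\lambda\Delta,w}(x)$ and $\log\varphi_{\lambda\Delta,w}(y)$ lie in $\mathbb{C}[\lambda][[x,y]]$. By lemma \ref{lem:strlog} they also lie in the ideal $(y(y-x))$, and dividing by the $\lambda$-free polynomial $y(y-x)$ preserves the polynomial-in-$\lambda$ character of each coefficient. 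This yields the claimed expression.

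The main subtlety is confirming that the $\lambda$-degree at each fixed monomial $x^k y^l$ does not blow up under iteration of $\Theta$. The order-raising property of $\Theta$ is exactly what prevents this, by forcing the relevant contributions at level $j+1$ to come from strictly lower-order monomials at level $j$, of which there are only finitely many at each order.
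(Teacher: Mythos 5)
Your proof is correct and follows essentially the same route as the paper: both expand $\log \varphi_{\lambda \Delta,w}$ via the series in $\Theta = \varphi_{\lambda \Delta,w} - Id$ from equation (\ref{equ:log}), use the order-raising property of $\Theta$ to reduce each coefficient $a_{k,l}^{j}(\lambda)$ to a finite sum of polynomials in $\lambda$, and check that $\Theta$ preserves ${\mathbb C}[\lambda][[x,y]]$ by Krull convergence of the monomial-by-monomial expansion. The only cosmetic difference is that you obtain the order-raising property directly from the binomial expansion of $\Theta$ on monomials, whereas the paper derives it by first proving $X_{\lambda}^{k}((y(y-x)) \hat{\mathfrak m}_{x,y}^{j}) \subset (y(y-x)) \hat{\mathfrak m}_{x,y}^{j+k}$ for the infinitesimal generator via lemma \ref{lem:strlog} and then transferring it to $\Theta_{\lambda}$.
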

\begin{proof}
Denote $X_{\lambda} = \log \varphi_{\lambda \Delta,w}$. Denote by $\Theta_{\lambda}$ the operator
$\varphi_{\lambda \Delta,w} - Id$ for $\lambda \in {\mathbb C}$. Given $j \geq 0$ consider
an element $g$ of $(y(y-x)) \hat{\mathfrak m}_{x,y}^{j}$. The series $g$ belongs to
$\hat{\mathfrak m}_{x,y}^{j+2}$; we obtain
\[ X_{\lambda}(g) =X_{\lambda}(x) \frac{\partial g}{\partial x} +  X_{\lambda}(y) \frac{\partial g}{\partial y}
\in (y(y-x))  \hat{\mathfrak m}_{x,y}^{j+2-1} = (y(y-x))  \hat{\mathfrak m}_{x,y}^{j+1} \]
by lemma \ref{lem:strlog} for any $\lambda \in {\mathbb C}$. We iterate the argument to get
\begin{equation}
\label{equ:stair}
X_{\lambda}^{k}((y(y-x)) \hat{\mathfrak m}_{x,y}^{j}) \subset (y(y-x)) \hat{\mathfrak m}_{x,y}^{j+k}
\end{equation}
for all $j, k \geq 0$ and $\lambda \in {\mathbb C}$. Moreover, since
$\Theta_{\lambda}(g) = \sum_{k=1}^{\infty} X_{\lambda}^{k}(g)/j!$
the equation (\ref{equ:stair}) implies
$\Theta_{\lambda}((y(y-x)) \hat{\mathfrak m}_{x,y}^{j}) \subset (y(y-x)) \hat{\mathfrak m}_{x,y}^{j+1}$
for any $\lambda \in {\mathbb C}$. We obtain
\begin{equation}
\label{equ:xua2}
\Theta_{\lambda}^{k}((y(y-x)) \hat{\mathfrak m}_{x,y}^{j}) \subset (y(y-x)) \hat{\mathfrak m}_{x,y}^{j+k}
\end{equation}
for all $j \geq 0$, $k \geq 1$ and $\lambda \in {\mathbb C}$.
The series $\Theta_{\lambda}(g)$ belongs to $(y(y-x))$, thus the equation (\ref{equ:xua2})
implies $\Theta_{\lambda}^{j}(g) = \Theta_{\lambda}^{j-1}(\Theta_{\lambda}(g))
\in (y(y-x)) \hat{\mathfrak m}_{x,y}^{j-1}$
for all $g \in {\mathbb C}[[x,y]]$, $j \geq 1$ and $\lambda \in {\mathbb C}$.
Since $X_{\lambda}(g) = \sum_{j=1}^{\infty} (-1)^{j+1} \Theta_{\lambda}^{j}(g)/j$ then
\[ a_{k,l}^{1}(\lambda)  = \frac{1}{k! l!} \sum_{j=1}^{k+l+1} \frac{(-1)^{j+1}}{j}
\frac{\partial^{k+l} [\Theta_{\lambda}^{j}(x)/(y(y-x))]}{\partial x^{k} \partial y^{l}}(0,0) . \]
An analogous expression is obtained for $a_{k,l}^{2}$ for all $k,l \geq 0$. It suffices to prove
that $\Theta_{\lambda}({\mathbb C}[\lambda][[x,y]]) \subset {\mathbb C}[\lambda][[x,y]]$; then
$a_{k,l}^{1}$ and $a_{k,l}^{2}$ are finite sums of polynomials for all $k,l \geq 0$.

Given $h = \sum_{k,l \geq 0} h_{k,l}(\lambda) x^{k} y^{l} \in {\mathbb C}[\lambda][[x,y]]$ we have
\[ \Theta_{\lambda}(h) = \sum_{k,l \geq 0} h_{k,l}(\lambda) \Theta_{\lambda}(x^{k} y^{l}) \]
and then
\begin{equation}
\label{equ:polyn}
\Theta_{\lambda}(h) =
\sum_{k,l \geq 0} h_{k,l}(\lambda) [   (x+y(y-x) \lambda \Delta )^{k} (y+y(y-x)w )^{l} -  x^{k} y^{l}] .
\end{equation}
Since $\Theta_{\lambda}(x^{k} y^{l}) \in \hat{\mathfrak m}_{x,y}^{k+l}$ for all $k,l \geq 0$ and
$\lambda \in {\mathbb C}$ then the series (\ref{equ:polyn}) converges in the Krull topology to an element of
${\mathbb C}[\lambda][[x,y]]$.
\end{proof}
\begin{pro}
Let $\hat{f}_{\lambda}$ be the unique formal first integral of $\log \varphi_{\lambda \Delta,w}$ such
that $\hat{f}_{\lambda}(x,0)=x$. Then $\hat{f}_{\lambda}$ can be expressed in the form
\[ \hat{f}_{\lambda} = x + y \sum_{j+k \geq 1} f_{j,k}(\lambda) {x}^{j} {y}^{k} \]
where $f_{j,k} \in {\mathbb C}[\lambda]$ and $\deg f_{j,k} \leq j+k$ for any $j+k \geq 1$.
\end{pro}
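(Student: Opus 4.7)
The plan is to combine the existence--uniqueness statement for $\hat{f}_\lambda$ (the lemma preceding Proposition~\ref{pro:invfix}) with a careful book-keeping of the $\lambda$-degrees of the Taylor coefficients. Writing $\hat{f}_\lambda = \sum_{m,n \geq 0} c_{m,n}(\lambda) x^m y^n$, the normalization $\hat{f}_\lambda(x,0) = x$ forces $c_{m,0}(\lambda) = \delta_{m,1}$, while the bidegree-$(0,0)$ coefficient of $(\log \varphi_{\lambda \Delta, w})(\hat{f}_\lambda) = 0$ reduces, by Lemma~\ref{lem:strlog}, to $w(0,0) c_{0,1}(\lambda) = 0$, hence $c_{0,1} \equiv 0$. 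Setting $f_{j,k}(\lambda) = c_{j,k+1}(\lambda)$ for $j+k \geq 1$ puts $\hat{f}_\lambda$ in the stated form, so what remains is the bound $\deg_\lambda c_{m,n}(\lambda) \leq m+n-1$ for every $n \geq 1$ with $m+n \geq 2$.

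Next I would extract the recursion. Expand $L_{\lambda \Delta, w} = \sum_{k,l} a^1_{k,l}(\lambda) x^k y^l \partial/\partial x + \sum_{k,l} a^2_{k,l}(\lambda) x^k y^l \partial/\partial y$ with $a^j_{k,l} \in \mathbb{C}[\lambda]$ (Lemma~\ref{lem:logpol}), $a^1_{0,0} \equiv 0$, and $a^2_{0,0} \equiv w(0,0)$. The bidegree-$(p,q)$ component of $L_{\lambda \Delta, w}(\hat{f}_\lambda) = 0$ then solves for $c_{p,q+1}(\lambda)$:
\begin{equation*}
(q+1) w(0,0)\, c_{p,q+1}(\lambda) = -\sum a^j_{k,l}(\lambda)\, c_{m,n}(\lambda),
\end{equation*}
the sum running over contributing quadruples with $k+l+m+n = p+q+1$, $(k,l) \neq (0,0)$, and consequently $m+n \leq p+q$.

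The hard part will be the bound $\deg_\lambda a^j_{k,l}(\lambda) \leq k+l$. I would obtain it by introducing the weight $v$ on $\mathbb{C}[\lambda][[x,y]]$ defined on monomials by $v(\lambda^s x^a y^b) = a+b-s$ and extended to series by taking the minimum, together with the filtration $F^k = \{f : v(f) \geq k\}$. Note that $v$ is a valuation: $v(fg) \geq v(f)+v(g)$, with equality whenever the product of the homogeneous leading forms is nonzero. Because $\Delta \in \hat{\mathfrak m}_{x,y}$ and $w(0,0) \neq 0$, one has $v(\lambda y(y-x) \Delta) \geq 2$ and $v(y(y-x) w) \geq 2$; expanding
\begin{equation*}
(x + \lambda y(y-x) \Delta)^a (y + y(y-x) w)^b - x^a y^b
\end{equation*}
term by term shows that every summand carries weight at least $a+b+1$. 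Hence the operator $\Theta_\lambda = \varphi_{\lambda \Delta, w} - Id$ sends $F^k$ into $F^{k+1}$, and iterating gives $\Theta_\lambda^{j}(x), \Theta_\lambda^{j}(y) \in F^{j+1}$. Summing the Krull-convergent series~(\ref{equ:log}) yields $\log \varphi_{\lambda \Delta, w}(x), \log \varphi_{\lambda \Delta, w}(y) \in F^2$. Since $v(y(y-x)) = 2$ with leading form $y^2 - xy$, a nonzerodivisor on the graded algebra (the polynomial ring being an integral domain), exact division by $y(y-x)$ subtracts exactly $2$ from the weight, giving $a^1, a^2 \in F^0$, which is precisely $\deg_\lambda a^j_{k,l} \leq k+l$.

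The proposition then follows by strong induction on $N = m+n$. Assuming $\deg_\lambda c_{m,n}(\lambda) \leq m+n-1$ for all $(m,n)$ with $m+n \leq N$ (the small cases $c_{0,0}=0$, $c_{1,0}=1$, $c_{0,1}=0$ being trivially compatible), every product on the right-hand side of the recursion has $\lambda$-degree at most $(k+l)+(m+n-1) = N$, so $\deg_\lambda c_{p,q+1}(\lambda) \leq N = p+(q+1)-1$ whenever $p+q = N$. Once the weight-raising property of $\Theta_\lambda$ is in place, both the division step and this induction are routine.
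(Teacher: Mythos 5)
Your proposal is correct, and for the crucial degree bound it takes a genuinely different route from the paper. The polynomiality half (the bidegree recursion obtained by solving for $c_{p,q+1}$ using that the constant coefficient of $a^{2}$ is the unit $w(0,0)$, with $a^{j}_{k,l}\in{\mathbb C}[\lambda]$ supplied by lemma \ref{lem:logpol} and $a^{1}_{0,0}\equiv 0$, $a^{2}_{0,0}\equiv w(0,0)$ by lemma \ref{lem:strlog}) matches the paper's first half in substance. For $\deg f_{j,k}\le j+k$, however, the paper never estimates inside the logarithm series: it conjugates by the homothety $(\lambda x,\lambda y)$, introduces the auxiliary family $\tau_{\Delta,w,\lambda}=(x/\lambda,y/\lambda)\circ\varphi_{\Delta/\lambda,w}\circ(\lambda x,\lambda y)$, invokes an analogue of lemma \ref{lem:logpol} for $\log\tau_{\Delta,w,\lambda}/(\lambda y(y-x))$ to get a second polynomial family $\hat{g}_{\lambda}$, and reads the bound off the identity $\hat{f}_{1/\lambda}(\lambda x,\lambda y)=\lambda\hat{g}_{\lambda}(x,y)$, i.e. $\lambda f_{j,k}(1/\lambda)\lambda^{j+k}=\lambda g_{j,k}(\lambda)$. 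Your weight $v(\lambda^{s}x^{a}y^{b})=a+b-s$ is exactly the grading under which that homothety conjugation acts trivially, so your filtration argument is the infinitesimal version of the same scaling symmetry, carried out directly on $\Theta_{\lambda}$ and the series (\ref{equ:log}); it yields the sharper intermediate statement $\deg_{\lambda}a^{j}_{k,l}\le k+l$ and then propagates it through the recursion. What each buys: the paper's trick is shorter and reuses lemma \ref{lem:logpol} as a black box, but leaves the analogue for the scaled family asserted rather than proved; yours is self-contained and makes the growth of the generator's coefficients explicit. Two details to make precise in a write-up: first, your parenthetical justifying that the leading form $y^{2}-xy$ is a nonzerodivisor on the associated graded is slightly off as stated — the graded pieces are not a polynomial ring, but a weight-homogeneous element of weight $k$ is exactly $\lambda^{-k}h(\lambda x,\lambda y)$ for some $h\in{\mathbb C}[[x,y]]$, whence the associated graded is a domain and exact division by $y(y-x)$ does drop the weight by exactly $2$; second, one should note that Krull convergence makes each bidegree coefficient of $(\log\varphi_{\lambda\Delta,w})(x)$ a finite sum of polynomials obeying the weight bound (only $j\le$ the total degree contribute, since $\Theta_{\lambda}^{j}(x)\in(y(y-x))\hat{\mathfrak m}_{x,y}^{j-1}$), which is what converts termwise membership of $\Theta_{\lambda}^{j}(x)$ in $F^{j+1}$ into $a^{1},a^{2}\in F^{0}$. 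With these remarks filled in, your induction closes correctly, since every term $a^{j}_{k,l}c_{m,n}$ with $k+l+m+n=p+q+1$ and $k+l\ge 1$ has $\lambda$-degree at most $p+q$, and the base cases $c_{0,0}=0$, $c_{1,0}=1$, $c_{0,1}=0$ (your use of $w(0,0)c_{0,1}=0$ here is where $\Delta(0,0)=0$ enters, matching the paper's observation that $f_{0,0}\equiv 0$) are compatible with the bound.
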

\begin{proof}
Let $\tilde{\mathfrak m}$ be the ideal $(x,y)$ of the ring ${\mathbb C}[\lambda][[x,y]]$.
The property $(\log \varphi_{\lambda \Delta,w})  (\hat{f}_{\lambda})=0$ is equivalent to
$L_{\lambda \Delta, w} (\hat{f}_{\lambda})=0$.
We denote
\[ L_{\lambda \Delta, w}= a(\lambda, x,y) \partial / \partial x + b(\lambda, x,y) \partial / \partial y. \]
Then $a$ and $b$ belong to ${\mathbb C}[\lambda][[x,y]]$ by lemma \ref{lem:logpol}.
Moreover, we have $a \in \tilde{\mathfrak m}$ and $b(\lambda,0,0) \equiv w(0,0)$ by
lemma \ref{lem:strlog}.
The series $b$ is of the form $w(0,0) (1 - b_{0})$ where $b_{0} \in \tilde{\mathfrak m}$. Hence we obtain
\begin{equation}
\label{equ:equsim}
a  \frac{\partial \hat{f}_{\lambda}}{\partial x} +
b  \frac{\partial \hat{f}_{\lambda}}{\partial y} = 0 \Rightarrow
\frac{\partial \hat{f}_{\lambda}}{\partial y} = - a w(0,0)^{-1} (\sum_{j=0}^{\infty} b_{0}^{j})
\frac{\partial \hat{f}_{\lambda}}{\partial x} .
\end{equation}
Since $b_{0}^{j} \in \tilde{\mathfrak m}^{j}$ for any $j \geq 0$ then
$\sum_{j=0}^{\infty} b_{0}^{j}$ converges in the Krull topology to an element of ${\mathbb C}[\lambda][[x,y]]$.
Denote $c= - a w(0,0)^{-1} (\sum_{j=0}^{\infty} b_{0}^{j})$.
Then $c$ belongs to
${\mathbb C}[\lambda][[x,y]] \cap \tilde{\mathfrak m}$. The series
$c$ can be expressed in the form
$\sum_{j=0}^{\infty} c_{j}(\lambda,x) y^{j}$ where $c_{j} \in {\mathbb C}[\lambda][[x]]$
for any $j \geq 0$. Moreover $c_{0}$ belongs to the ideal $(x)$ of ${\mathbb C}[\lambda][[x]]$.
Denote $\hat{f}_{\lambda}= \sum_{j=0}^{\infty} f_{j}(\lambda,x) y^{j}$.
We have $f_{0} \equiv x$ by choice. We obtain
\begin{equation}
\label{equ:coefl}
\sum_{j=1}^{\infty} j f_{j}(\lambda,x) y^{j-1} = (\sum_{j=0}^{\infty} c_{j}(\lambda,x) y^{j})
\left({ \sum_{j=0}^{\infty} \frac{f_{j}(\lambda,x)}{\partial x} y^{j} }\right)
\end{equation}
by developing equation (\ref{equ:equsim}).
By comparing the coefficients of $y^{0}$ in both sides of equation (\ref{equ:coefl}) we obtain
$f_{1}=c_{0} (\partial f_{0}/\partial x)=c_{0} \in (x) \subset {\mathbb C}[\lambda][[x]]$.
Suppose that $f_{0}, \hdots, f_{k}$ belong to ${\mathbb C}[\lambda][[x]]$ for some $k \geq 0$. Since
\[ (k+1) f_{k+1} = \sum_{j=0}^{k} c_{j} (\partial f_{k-j}/ \partial x) \]
then $f_{k+1}$ belongs to ${\mathbb C}[\lambda][[x]]$. We deduce that $f_{j}$ belongs to
${\mathbb C}[\lambda][[x]]$ for any $j \geq 0$ by induction. Therefore
$\hat{f}_{\lambda}$ belongs to ${\mathbb C}[\lambda][[x,y]]$ and it can be expressed in the
form $ x + y \sum_{j+k \geq 0} f_{j,k}(\lambda) {x}^{j} {y}^{k}$ where
$f_{j,k} \in {\mathbb C}[\lambda]$ for all $j,k \geq 0$. Moreover $f_{0,0}$ is identically zero
since $f_{1}(\lambda,0) \equiv 0$.

  Consider
\[ \tau_{\Delta,w,\lambda}(x,y)= \left({ \frac{x}{\lambda}, \frac{y}{\lambda} }\right)
\circ \varphi_{\Delta/\lambda, w} \circ (\lambda x, \lambda y) . \]
We have
\[ \tau_{\Delta,w,\lambda}(x,y) = (x + y(y-x) \Delta(\lambda x, \lambda y), y + \lambda y (y-x) w(\lambda x, \lambda y)) . \]
We can proceed as in lemma \ref{lem:strlog} to prove
\[ \log \tau_{\Delta,w,\lambda}(x,y) = \lambda y (y-x) \left({
w(0,0) \frac{\partial}{\partial{y}} + h.o.t. }\right) . \]
There is an analogue of lemma \ref{lem:logpol} for $\log \tau_{\Delta,w,\lambda}/(\lambda y (y-x))$.
Again such an expression can be used to prove that the unique first integral
\[ \hat{g}_{\lambda} = x + y \sum_{j+k \geq 1} g_{j,k}(\lambda) {x}^{j} {y}^{k} \]
of $\log \tau_{\Delta,w,\lambda}$ such that $\hat{g}_{\lambda}(x,0)=x$ satisfies
$g_{j,k} \in {\mathbb C}[\lambda]$ for any
$j+k \geq 1$. The relation between $\varphi_{\lambda \Delta,w}$ and $\tau_{\Delta,w,\lambda}$ implies
\[ \hat{f}_{1/\lambda}(\lambda x, \lambda y) = \lambda \hat{g}_{\lambda}(x,y) . \]
We obtain $\lambda {f}_{j,k}(1/\lambda) {\lambda}^{j+k} = \lambda g_{j,k}(\lambda)$ for any $j+k \geq 1$.
Since ${f}_{j,k}$ and ${g}_{j,k}$ are polynomials we deduce that ${f}_{j,k}$
is a polynomial of degree at most $j+k$ for any $j+k \geq 1$.
\end{proof}
  We have $\hat{f}_{\lambda}(x,x) = x + \sum_{j+k \geq 1} f_{j,k}(\lambda) {x}^{j+k+1}$.
Next result is crucial.
\begin{pro}[\cite{PM2,PM4}]
\label{pro:PM}
Let $\hat{P} = \sum_{j \geq 0} P_{j}(\lambda) {x}^{j}$ where $P_{j} \in {\mathbb C}[\lambda]$ and
$\deg P_{j} \leq Aj+B$ for some $A, B \in {\mathbb R}$ and any $j \in {\mathbb N}$. Then either
$\hat{P}(\lambda,x)$ is convergent in a neighborhood of $x=0$ or
$\hat{P}(\lambda) \in {\mathbb C}[[x]] \setminus {\mathbb C}\{x\}$ for any $\lambda \in {\mathbb C}$
outside a polar set.
\end{pro}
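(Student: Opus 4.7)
The plan is to study the sequence of subharmonic functions $u_j(\lambda) := j^{-1}\log|P_j(\lambda)|$ on ${\mathbb C}$ and their pointwise upper limit $v(\lambda) := \limsup_{j\to\infty} u_j(\lambda) \in [-\infty,+\infty]$. By the Cauchy--Hadamard formula, the radius of convergence in $x$ of $\hat P(\lambda,x)$ equals $e^{-v(\lambda)}$, so $\hat P(\lambda,\cdot) \in {\mathbb C}\{x\}$ if and only if $v(\lambda) < \infty$. The proposition will then follow from the dichotomy: either $\{u_j\}$ is locally uniformly bounded above on ${\mathbb C}$, in which case $\hat P$ converges on a bidisc around $(\lambda_0,0)$ for every $\lambda_0 \in {\mathbb C}$, or the set $\{v < \infty\}$ is polar.

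The main tool is the Bernstein--Walsh inequality: for every compact $K \subset {\mathbb C}$ of positive logarithmic capacity and every polynomial $P$ of degree $d$,
\[ |P(\lambda)| \leq \|P\|_K\, \exp(d\, g_K(\lambda)), \qquad \lambda \in {\mathbb C}, \]
where $g_K \geq 0$ is the Green function of the unbounded component of ${\mathbb C}\setminus K$ with pole at infinity, which is locally bounded on ${\mathbb C}$. For each $(M,N,R) \in {\mathbb N}^{3}$ I introduce the bounded Borel set
\[ F_{M,N,R} := \{\lambda \in \overline{D(0,R)} : u_j(\lambda) \leq M+1 \text{ for every } j \geq N\}, \]
and observe that $\{v < \infty\} = \bigcup_{M,N,R} F_{M,N,R}$. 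If every $F_{M,N,R}$ is polar, then $\{v<\infty\}$ is polar by countable subadditivity of outer capacity, yielding the second alternative. Otherwise some $F_{M,N,R}$ has positive outer capacity; by Choquet capacitability one extracts a compact $K \subset F_{M,N,R}$ with positive capacity on which $\|P_j\|_K \leq e^{(M+1)j}$ for every $j \geq N$. Since $\deg P_j \leq Aj+B$, Bernstein--Walsh then gives
\[ u_j(\lambda) \leq (M+1) + (A + B/N)\, g_K(\lambda) \qquad \text{for every } \lambda \in {\mathbb C},\ j \geq N. \]
As $g_K$ is locally bounded, on any compact $L \subset {\mathbb C}$ the right-hand side is bounded by some constant $C_L$; hence $\sum_j P_j(\lambda) x^j$ converges absolutely and uniformly on $L \times \{|x| < e^{-C_L - 1}\}$, and $\hat P$ is convergent in a neighborhood of $x=0$.

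The hard step is the passage from \emph{``$v$ finite on a non-polar set''} to \emph{``$\{u_j\}$ uniformly bounded above on a compact set of positive capacity''}, because the threshold $j_0(\lambda)$ beyond which $u_j(\lambda) \leq M+1$ genuinely depends on $\lambda$. The Borel decomposition $\{v<\infty\} = \bigcup F_{M,N,R}$ combined with countable subadditivity of outer capacity and Choquet capacitability converts this pointwise information into the bound $u_j \leq M+1$ on a single compact set of positive capacity, uniformly for all $j \geq N$; this is precisely the input needed for Bernstein--Walsh to propagate the bound to all of ${\mathbb C}$ and close the argument.
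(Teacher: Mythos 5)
Your proof is correct. Note that the paper does not actually prove this proposition --- it is imported from \cite{PM2,PM4} --- and your argument (the Cauchy--Hadamard reduction to $v=\limsup_{j\to\infty} j^{-1}\log|P_j|$, the countable decomposition $\{v<\infty\}=\bigcup_{M,N,R} F_{M,N,R}$, countable subadditivity of capacity, and the Bernstein--Walsh propagation step, which is exactly where the hypothesis $\deg P_j\leq Aj+B$ enters and without which the statement fails, as the paper remarks) is precisely the standard potential-theoretic proof of P\'{e}rez Marco to which the citation points. Two cosmetic remarks: since each $\log|P_j|$ is continuous as a map into $[-\infty,\infty)$, every $F_{M,N,R}$ is an intersection of closed sets with $\overline{D}(0,R)$ and hence already compact, so the appeal to Choquet capacitability is superfluous; and when $B<0$ the uniform exponent $A+B/N$ should be replaced by, say, $|A|+|B|$, a harmless bookkeeping point since $g_K\geq 0$.
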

The series $\hat{P}(\lambda,x)$ is convergent in a neighborhood of $x=0$ if there exists a neighborhood
$V$ of $x=0$ in ${\mathbb C}^{2}$ and a function $P$ holomorphic in $V$ such that $\hat{P}(\lambda,x)$
is the power series development of $P$. This property implies
$\hat{P}(\lambda,x) \in {\mathbb C}\{x\}$ for any $\lambda \in {\mathbb C}$.
The reciprocal is false if we drop the condition on the linear growth of $\deg P_{j}$.
%

 A polar set (see \cite{ran}) has measure zero as well as Haussdorff dimension zero.
Moreover, it is totally disconnected.
\begin{cor}
\label{cor:polar}
  Fix $\Delta \in {\mathfrak m}_{x,y}$ and $w \in  {\mathbb C}\{x,y\}$ with $w(0,0) \neq 0$. Either
$(\lambda, x) \mapsto Tr_{\lambda \Delta, w}(x)$ is convergent in a neighborhood of $x=0$ or
$x \mapsto Tr_{\lambda \Delta, w}(x)$ is divergent for any $\lambda \in {\mathbb C}$ outside a polar set.
\end{cor}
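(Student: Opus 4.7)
The plan is to reduce the statement directly to Proposition \ref{pro:PM} via the explicit form of $\hat{f}_{\lambda}(x,x)$ obtained in the previous proposition, and then to transfer the dichotomy from $\hat{f}_{\lambda}(x,x)$ to its compositional inverse, which is essentially what defines $Tr_{\lambda \Delta,w}$.

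First I would rewrite
\[
\hat{f}_{\lambda}(x,x) \;=\; x + \sum_{j+k \geq 1} f_{j,k}(\lambda)\, x^{j+k+1}
\]
by grouping terms of equal total degree in $x$: set
\[
P_{n}(\lambda) \;=\; \sum_{\substack{j,k \geq 0 \\ j+k = n-1}} f_{j,k}(\lambda), \qquad n \geq 2,
\]
so that $\hat{f}_{\lambda}(x,x) = x + \sum_{n \geq 2} P_{n}(\lambda)\, x^{n}$. The preceding proposition gives $\deg f_{j,k} \leq j+k$, hence $\deg P_{n} \leq n-1$, which satisfies the linear growth hypothesis of Proposition \ref{pro:PM} (e.g.\ with $A=1$, $B=0$). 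Applying that proposition I obtain the dichotomy: either $\hat{f}_{\lambda}(x,x)$ is convergent as a function of $(\lambda,x)$ in a neighborhood of $x=0$, or $\hat{f}_{\lambda}(x,x) \in \mathbb{C}[[x]] \setminus \mathbb{C}\{x\}$ for every $\lambda$ outside a polar set $E \subset \mathbb{C}$.

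Next I would transfer this dichotomy to $Tr_{\lambda \Delta,w}$. From section \ref{sec:polfam} we have the identification
\[
Tr_{\lambda \Delta, w}(x,0) \;=\; \bigl({(\hat{f}_{\lambda}(x,x))}^{(-1)} ,\, {(\hat{f}_{\lambda}(x,x))}^{(-1)} \bigr),
\]
and the series $\hat{f}_{\lambda}(x,x)$ has $x$-derivative equal to $1$ at $x=0$ for every $\lambda$. In the convergent case, the holomorphic inverse function theorem with a holomorphic parameter guarantees that ${(\hat{f}_{\lambda}(x,x))}^{(-1)}$ is itself holomorphic in $(\lambda,x)$ near $x=0$, whence $Tr_{\lambda \Delta,w}(x)$ is convergent there. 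In the divergent case, for any $\lambda \notin E$, if ${(\hat{f}_{\lambda}(x,x))}^{(-1)}$ were convergent then composing it with itself would force $\hat{f}_{\lambda}(x,x)$ to be convergent, contradicting $\lambda \notin E$; therefore $Tr_{\lambda \Delta,w}$ diverges for all $\lambda$ outside $E$.

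The only nontrivial step is checking the degree bound $\deg P_{n} \leq n-1$, which is immediate from the previous proposition, and verifying that compositional inversion preserves convergence jointly in the parameter, which reduces to the standard implicit function theorem with holomorphic dependence. The genuine analytic content is wholly contained in Proposition \ref{pro:PM}; everything else is bookkeeping.
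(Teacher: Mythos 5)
Your proposal is correct and follows exactly the route the paper intends: the corollary is stated as an immediate consequence of the preceding proposition (giving $\hat{f}_{\lambda}(x,x)=x+\sum_{j+k\geq 1}f_{j,k}(\lambda)x^{j+k+1}$ with $\deg f_{j,k}\leq j+k$), Proposition \ref{pro:PM}, and the identity $Tr_{\lambda\Delta,w}(x,0)=\bigl((\hat{f}_{\lambda}(x,x))^{(-1)},(\hat{f}_{\lambda}(x,x))^{(-1)}\bigr)$ stated at the start of Section \ref{sec:polfam}, which already records that $Tr_{\lambda\Delta,w}$ converges if and only if $\hat{f}_{\lambda}(x,x)\in{\mathbb C}\{x\}$. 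You merely make explicit the degree bookkeeping ($\deg P_{n}\leq n-1$) and the standard parametric inverse-function-theorem step that the paper leaves tacit, so the argument matches the paper's.
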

\begin{pro}
\label{pro:proequ}
Let  $w \in  {\mathbb C}\{x,y\} \setminus {\mathfrak m}_{x,y}$ and $\Delta \in {\mathfrak m}_{x,y}$
(see def. \ref{def:max}). Suppose that the formal class of $\varphi_{\lambda \Delta,w}$ is
embeddable for any $\lambda \in {\mathbb C}$. Then the equation
\[ \hat{\epsilon} - \hat{\epsilon} \circ \varphi_{0,w} = y(y-x) \Delta(x,y)
\ \ \ {\rm (homological \ equation)} \]
has a solution $\hat{\epsilon}_{\Delta} \in {\mathbb C}[[x,y]]$ such that
$\hat{\epsilon}_{\Delta}(x,x) - \hat{\epsilon}_{\Delta}(x,0) \in {\mathbb C}\{x \}$.
\end{pro}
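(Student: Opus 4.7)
The plan is to identify $\hat{\epsilon}_\Delta$ as the first-order coefficient in $\lambda$ of the distinguished first integral $\hat{f}_\lambda$ of $\log \varphi_{\lambda\Delta,w}$, and then to extract its convergence on the diagonal $y=x$ from the convergence of the family $Tr_{\lambda\Delta,w}$ via the dichotomy in Corollary \ref{cor:polar}.

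First I would combine Proposition \ref{pro:tracon} with the hypothesis to conclude that $Tr_{\lambda\Delta,w}$ is convergent for \emph{every} $\lambda\in\mathbb{C}$. Then I invoke Corollary \ref{cor:polar}: the second alternative would force divergence on the complement in $\mathbb{C}$ of a polar set, but $\mathbb{C}$ itself is not polar. Hence the first alternative must hold, i.e.\ $(\lambda,x)\mapsto Tr_{\lambda\Delta,w}(x,0)$ is jointly analytic in a neighborhood of the origin of $\mathbb{C}^{2}$. Writing $Tr_{\lambda\Delta,w}(x,0)=(h(\lambda,x),h(\lambda,x))$ with $h(\lambda,\cdot)=(\hat{f}_\lambda(x,x))^{(-1)}$, this gives $h\in\mathbb{C}\{\lambda,x\}$.

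Next I expand $\hat{f}_\lambda$ in powers of $\lambda$ (this is legitimate by the Proposition preceding Corollary \ref{cor:polar}, which shows $\hat{f}_\lambda\in\mathbb{C}[\lambda][[x,y]]$). Since $\log\varphi_{0,w}$ admits $x$ as a first integral (because $x\circ\varphi_{0,w}=x$ combined with Lemma \ref{lem:expdif}), the normalization $\hat{f}_\lambda(x,0)=x$ forces $\hat{f}_0=x$. I then define
\[
\hat{\epsilon}_\Delta(x,y) := \left.\frac{\partial \hat{f}_\lambda}{\partial\lambda}\right|_{\lambda=0}\in\mathbb{C}[[x,y]].
\]
Differentiating $\hat{f}_\lambda(x,0)=x$ in $\lambda$ at $\lambda=0$ yields $\hat{\epsilon}_\Delta(x,0)=0$. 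Differentiating the identity $\hat{f}_\lambda\circ\varphi_{\lambda\Delta,w}=\hat{f}_\lambda$ (Lemma \ref{lem:expdif}) in $\lambda$ at $\lambda=0$ and using $\hat{f}_0=x$ together with the shape of $\varphi_{\lambda\Delta,w}$ gives
\[
\hat{\epsilon}_\Delta(x,y)=\hat{\epsilon}_\Delta\circ\varphi_{0,w}(x,y)+y(y-x)\Delta(x,y),
\]
which is the homological equation.

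Finally, differentiating $g(\lambda,h(\lambda,x))=x$ with $g(\lambda,u)=\hat{f}_\lambda(u,u)$ in $\lambda$ at $\lambda=0$ (where $h(0,x)=x$ and $\partial_u g(0,x)=1$) produces
\[
\hat{\epsilon}_\Delta(x,x)=\left.\partial_\lambda g(\lambda,x)\right|_{\lambda=0}=-\left.\partial_\lambda h(\lambda,x)\right|_{\lambda=0}\in\mathbb{C}\{x\},
\]
the last membership being the harvest of Step~2. Combining with $\hat{\epsilon}_\Delta(x,0)=0$ yields $\hat{\epsilon}_\Delta(x,x)-\hat{\epsilon}_\Delta(x,0)\in\mathbb{C}\{x\}$, as required. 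The genuine content is concentrated in the first step: ruling out the generic divergence scenario of Corollary \ref{cor:polar} is exactly where the hypothesis ``for all $\lambda$'' is used in an essential way, while the rest is a routine first-order expansion.
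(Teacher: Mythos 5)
Your proposal is correct and follows essentially the same route as the paper's proof: you define $\hat{\epsilon}_{\Delta}=\partial_{\lambda}\hat{f}_{\lambda}|_{\lambda=0}$, obtain the homological equation by differentiating the invariance $\hat{f}_{\lambda}\circ\varphi_{\lambda\Delta,w}=\hat{f}_{\lambda}$ at $\lambda=0$ using $\hat{f}_{0}=x$, and get the convergence on $y=x$ from Proposition \ref{pro:tracon} combined with the dichotomy of Corollary \ref{cor:polar}. The only deviation is cosmetic: where the paper directly asserts that $(\lambda,x)\mapsto\hat{f}_{\lambda}(x,x)$ is convergent near $(\lambda,x)=(0,0)$, you pass through the transport mapping $h(\lambda,\cdot)=(\hat{f}_{\lambda}(x,x))^{(-1)}$ and differentiate the inverse relation $g(\lambda,h(\lambda,x))=x$, which merely makes explicit the inversion step the paper leaves implicit.
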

\begin{proof}
Let $\hat{f}_{\lambda}$ be the first integral of $\log \varphi_{\lambda \Delta,w}$ such that
$\hat{f}_{\lambda}(x,0)=x$. It satisfies $\hat{f}_{\lambda} \circ \varphi_{\lambda \Delta,w} = \hat{f}_{\lambda}$
for any $\lambda \in {\mathbb C}$ by lemma \ref{lem:expdif}. We obtain
\[ {\frac{\partial(\hat{f}_{\lambda} \circ \varphi_{\lambda \Delta,w})}{\partial{\lambda}}}_{|\lambda=0}
= {\frac{\partial \hat{f}_{\lambda}}{\partial{\lambda}}}_{|\lambda=0} . \]
Since $\hat{f}_{0}=x$ then
\[ y(y-x) \Delta(x,y)  + \left({
{\frac{\partial \hat{f}_{\lambda}}{\partial{\lambda}}}_{|\lambda=0} }\right)
\circ \varphi_{0,w} = {\frac{\partial \hat{f}_{\lambda}}{\partial{\lambda}}}_{|\lambda=0} . \]
We define $\hat{\epsilon}_{\Delta}(x,y) = (\partial \hat{f}_{\lambda}/ \partial{\lambda})(0,x,y)$.
We have $\hat{\epsilon}_{\Delta}(x,0)=0$ by definition of $\hat{f}_{\lambda}$.
The corollary \ref{cor:polar} and proposition \ref{pro:tracon} imply that
$(\lambda, x) \mapsto \hat{f}_{\lambda}(x,x)$ is convergent in a neighborhood of $(\lambda,x)=(0,0)$.
Therefore $\hat{\epsilon}_{\Delta}(x,x) \in {\mathbb C} \{ x \}$ and then
$\hat{\epsilon}_{\Delta}(x,x) - \hat{\epsilon}_{\Delta}(x,0) \in {\mathbb C}\{x \}$.
\end{proof}
Note that in this section we related the existence of a diffeomorphism $\varphi_{\Delta,w}$
with $\Delta \neq 0$ and no embeddable formal class with the properties of $\varphi_{0,w}$
(which by the way has embeddable formal class).

The rest of the paper is devoted to prove the existence of a homological equation such that
$\hat{\epsilon}(x,x) - \hat{\epsilon}(x,0)$ diverges for any formal solution $\hat{\epsilon}(x,y)$.
%
%
\section{The homological equation}
The main result in this section is proving that the homological equation
can be replaced by a differential equation.
\begin{lem}
\label{lem:pary}
$\log \varphi_{0,w}$ is of the form $y(y-x) (w(0,0) + h.o.t.) \partial / \partial{y}$.
\end{lem}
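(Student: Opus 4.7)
The plan is to combine the general structural result from Lemma \ref{lem:strlog} with the specific feature that $\varphi_{0,w}$ fixes the coordinate $x$. Specifically, since
\[ \varphi_{0,w}(x,y) = (x, y + y(y-x) w(x,y)), \]
we have the identity $x \circ \varphi_{0,w} = x$ as an element of ${\mathbb C}[[x,y]]$.

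Applying Lemma \ref{lem:expdif} to $\sigma = \varphi_{0,w}$ and $\hat{f} = x$, the equality $x \circ \varphi_{0,w} = x$ translates into $(\log \varphi_{0,w})(x) = 0$. Since any formal vector field $X$ on $({\mathbb C}^{2},0)$ decomposes as $X = X(x) \partial/\partial x + X(y) \partial/\partial y$, this shows that the $\partial/\partial x$ component of $\log \varphi_{0,w}$ vanishes identically, so
\[ \log \varphi_{0,w} = (\log \varphi_{0,w})(y) \frac{\partial}{\partial y}. \]

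To finish, invoke Lemma \ref{lem:strlog} with $\Delta = 0$: it yields $\log \varphi_{0,w} = y(y-x) \bigl( w(0,0) \partial/\partial y + h.o.t.\bigr)$, where $h.o.t.$ denotes a formal vector field whose coefficients lie in $\hat{\mathfrak m}_{x,y}$. Since we have just shown that the $\partial/\partial x$ component is zero, the entire $h.o.t.$ term must be proportional to $\partial/\partial y$, and we conclude
\[ \log \varphi_{0,w} = y(y-x)\bigl(w(0,0) + h.o.t.\bigr) \frac{\partial}{\partial y}, \]
with $h.o.t.$ now standing for a series in $\hat{\mathfrak m}_{x,y}$, as required. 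There is no real obstacle here; the lemma is essentially a direct specialization of Lemma \ref{lem:strlog} once the fixed-coordinate observation is combined with Lemma \ref{lem:expdif}.
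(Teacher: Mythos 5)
Your proof is correct and follows essentially the same route as the paper: the paper likewise combines the structure given by Lemma \ref{lem:strlog} with the observation that $x \circ \varphi_{0,w} = x$, using Lemma \ref{lem:expdif} to conclude $(\log \varphi_{0,w})(x) = 0$ and hence that the $\partial/\partial x$ component vanishes. Nothing is missing.
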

\begin{proof}
  By lemma \ref{lem:strlog} $\log \varphi_{0,w}$ is of the form
\[ y(y-x) (a(x,y) \partial / \partial x + b(x,y) \partial / \partial y) \]
where $a,b \in {\mathbb C}[[x,y]]$, $a(0,0)=0$ and $b(0,0)=w(0,0)$. It suffices to prove
that $a \equiv 0$ or equivalently $(\log \varphi_{0,w})(x) = 0$.
Since $x \circ \varphi_{0,w} =x$ then we have $(\log \varphi_{0,w})(x) = 0$ by
lemma \ref{lem:expdif}.
\end{proof}
\begin{lem}
\label{lem:funwd1}
The equation $\hat{\epsilon}  - \hat{\epsilon}  \circ \varphi_{0,w} = y(y-x) \Delta(x,y)$
has a solution $\hat{\epsilon} = \hat{\epsilon}_{\Delta}\in {\mathbb C}[[x,y]]$ for any
$\Delta \in {\mathbb C}[[x,y]]$.
\end{lem}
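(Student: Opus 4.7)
The plan is to factor the difference operator $\Theta := \varphi_{0,w} - \mathrm{Id}$ as $\Theta = X \circ U$, where $X = \log \varphi_{0,w}$ and $U$ is a formally invertible operator on ${\mathbb C}[[x,y]]$. This reduces the homological equation to a first order linear ODE in $y$ that is solvable by one integration. By Lemma~\ref{lem:pary} we have $X = y(y-x)\hat w \, \partial/\partial y$ with $\hat w \in {\mathbb C}[[x,y]]$ a unit (since $\hat w(0,0) = w(0,0) \neq 0$); and since $\varphi_{0,w} = {\rm exp}(X)$, the identity $g \circ \varphi_{0,w} - g = \sum_{k \geq 1} X^{k}(g)/k!$ together with the obvious fact that $X$ commutes with itself rearranges to $\Theta = X \circ U$ with
\[ U \;=\; \sum_{k \geq 0} \frac{X^{k}}{(k+1)!} \;=\; \mathrm{Id} + \frac{X}{2} + \frac{X^{2}}{6} + \cdots . \]

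A direct calculation gives $X(x^{i} y^{j}) = j \hat w (y-x) x^{i} y^{j}$, so $X$ sends $\hat{\mathfrak m}_{x,y}^{n}$ into $\hat{\mathfrak m}_{x,y}^{n+1}$ for every $n \geq 0$. Hence the series defining $U$ converges in the Krull topology, and $U = \mathrm{Id} + M$ where $M$ strictly raises the order. The Neumann series $U^{-1} = \sum_{k \geq 0} (-M)^{k}$ therefore also converges in the Krull topology, so $U$ is invertible on ${\mathbb C}[[x,y]]$.

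The homological equation $\hat\epsilon - \hat\epsilon \circ \varphi_{0,w} = y(y-x)\Delta$ rewrites as $\Theta(\hat\epsilon) = -y(y-x)\Delta$, and setting $\hat F := U(\hat\epsilon)$ turns it into $X(\hat F) = -y(y-x)\Delta$, i.e.\ $\partial \hat F/\partial y = -\Delta/\hat w$. Since $\hat w$ is a unit, $-\Delta/\hat w$ lies in ${\mathbb C}[[x,y]]$, so one may take
\[ \hat F(x,y) = -\int_{0}^{y} \frac{\Delta(x,t)}{\hat w(x,t)}\, dt \;\in\; {\mathbb C}[[x,y]], \]
and then $\hat\epsilon_{\Delta} := U^{-1}(\hat F)$ is a formal solution. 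No substantial obstacle arises: the only points needing care are the order-raising property of $X$ (which makes the series for both $U$ and $U^{-1}$ Krull-convergent) and the legitimacy of the formal rearrangement $\Theta = X \circ U$, both of which are elementary consequences of the explicit form of $X$ furnished by Lemma~\ref{lem:pary}.
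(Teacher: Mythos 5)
Your proof is correct, and it is organized differently from the paper's. You exploit the explicit form $X=\log \varphi_{0,w}=y(y-x)\hat{w}\,\partial/\partial y$ from lemma \ref{lem:pary} to factor the difference operator exactly as $\Theta=X\circ U$ with $U=\sum_{k\geq 0}X^{k}/(k+1)!$, invert $U$ by a Neumann series, and then solve the reduced equation $\partial \hat{F}/\partial y=-\Delta/\hat{w}$ by a single formal integration; all the convergence claims (Krull convergence of the series for $U$ and $U^{-1}$, and the legitimacy of pulling $X$ out of the infinite sum) follow, as you say, from the order-raising property $X(\hat{\mathfrak m}_{x,y}^{n})\subset \hat{\mathfrak m}_{x,y}^{n+1}$, which makes $X$ Krull-continuous. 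The paper instead runs a successive-approximation scheme: it solves $(\log \varphi_{0,w})(\tilde{\epsilon}_{j})=-y(y-x)\Delta_{j}$ at each stage, shows that the residual $\Delta_{j+1}$, produced by the terms of order at least $2$ of the exponential, gains one order in $\tilde{\nu}$, and sums $\tilde{\epsilon}_{0}+\tilde{\epsilon}_{1}+\cdots$ in the Krull topology. The two arguments have the same analytic content: setting $N=\sum_{k\geq 2}X^{k-1}/k!$ one has $\Theta=(\mathrm{Id}+N)\circ X$, the operator $\mathrm{Id}+N$ is exactly your $U$ (which commutes with $X$, being a power series in $X$), and the paper's iteration is precisely the Neumann series $U^{-1}=\sum_{k\geq 0}(-N)^{k}$ applied to the right-hand side rather than stated abstractly. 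What your packaging buys is brevity and a closed-form solution $\hat{\epsilon}_{\Delta}=U^{-1}(\hat{F})$ with $\hat{F}=-\int_{0}^{y}\Delta(x,t)/\hat{w}(x,t)\,dt$; what the paper's version buys is slightly greater generality, since it only uses that $L_{0,w}$ is nonsingular (lemma \ref{lem:strlog}) and that $X(\tilde{\epsilon})=h$ is solvable with a gain of order, not the collinearity of $X$ with $\partial/\partial y$, so the identical iteration would work for any $\varphi_{\Delta,w}$.
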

\begin{proof}
We define $\tilde{\nu}(A)= \sup \{ j \in {\mathbb N} \cup \{0\} : A \in \hat{\mathfrak m}_{x,y}^{j} \}$
for $A \in {\mathbb C}[[x,y]]$. We define $\Delta_{0}=\Delta$. Consider the equation
\begin{equation}
\label{equ:plug}
(\log \varphi_{0,w}) (\tilde{\epsilon}_{0}) = -y (y-x) \Delta_{0} .
\end{equation}
Since $L_{0,w}$ is non-singular at $(0,0)$ there exists a solution
$\tilde{\epsilon}_{0} \in {\mathbb C}[[ x,y ]]$ such that
$\tilde{\nu}(\tilde{\epsilon}_{0}) \geq \tilde{\nu}(\Delta_{0}) + 1$.
We consider
\begin{equation}
\label{equ:red1}
 (\tilde{\epsilon}_{0} + \epsilon_{1}) - (\tilde{\epsilon}_{0} + \epsilon_{1}) \circ \varphi_{0,w}=
y(y-x) \Delta_{0} .
\end{equation}
A solution $\epsilon_{1} \in {\mathbb C}[[x,y]]$ of equation (\ref{equ:red1}) provides a
solution of the original equation. By plugging the equation (\ref{equ:plug}) in the development of
the exponential $\tilde{\epsilon}_{0} \circ \varphi_{0,w} = \tilde{\epsilon}_{0}
+ \sum_{j=1}^{\infty} (\log \varphi_{0,w})^{j}(\tilde{\epsilon}_{0}) / j!$ we obtain
\[ \tilde{\epsilon}_{0} \circ \varphi_{0,w} = \tilde{\epsilon}_{0} - y (y-x) \Delta_{0}
+ \sum_{j=2}^{\infty} \frac{(\log \varphi_{0,w})^{j-1}(-y(y-x) \Delta_{0})}{j!} . \]
As a consequence the equation (\ref{equ:red1}) is equivalent to
\[ \epsilon_{1} - \epsilon_{1} \circ \varphi_{0,w} = \sum_{k \geq 2} \frac{1}{k!}
(\log \varphi_{0,w})^{k-1} (-y(y-x) \Delta_{0}(x,y)) . \]
We denote the term in the right-hand side by $y(y-x) \Delta_{1}$. The series $\Delta_{1}$ satisfies
$\tilde{\nu}(\Delta_{1}) \geq \tilde{\nu}(\Delta_{0})+1$. We have
$\tilde{\epsilon}_{0} - \tilde{\epsilon}_{0} \circ \varphi_{0,w} = y(y-x) (\Delta - \Delta_{1})$
by construction. We proceed by induction. Given
$\Delta_{j} \in  {\mathbb C}[[x,y]]$ there
exists $\tilde{\epsilon}_{j} \in {\mathbb C}[[x,y]]$ such that
$(\log \varphi_{0,w}) (\tilde{\epsilon}_{j}) = -y (y-x) \Delta_{j}$
and $\tilde{\nu}(\tilde{\epsilon}_{j}) \geq \tilde{\nu}(\Delta_{j})+1$. As previously we define
\[ y (y-x) \Delta_{j+1} = \sum_{k \geq 2} \frac{1}{k!}
(\log \varphi_{0,w})^{k-1} (-y(y-x) \Delta_{j}(x,y)) . \]
We obtain  $\tilde{\nu}(\Delta_{j+1}) \geq \tilde{\nu}(\Delta_{j})+1$ for any $j \geq 0$.
The construction implies that
$\tilde{\epsilon}_{j} - \tilde{\epsilon}_{j} \circ \varphi_{0,w} = y(y-x) (\Delta_{j} - \Delta_{j+1})$
and then
\begin{equation}
\label{equ:finlevl}
 (\tilde{\epsilon}_{0} + \hdots + \tilde{\epsilon}_{j}) -
(\tilde{\epsilon}_{0} + \hdots + \tilde{\epsilon}_{j}) \circ \varphi_{0,w} =
y(y-x) (\Delta - \Delta_{j+1})
\end{equation}
for any $j \in {\mathbb N} \cup \{0\}$. Since $\tilde{\nu}(\Delta_{j+1}) \geq \tilde{\nu} (\Delta_{j}) +1$
for $j \geq 0$ then $\tilde{\nu}(\Delta_{j}) \geq j$ and $\tilde{\nu}(\tilde{\epsilon}_{j}) \geq j+1$
for any $j \geq 0$. The series $\Delta - \Delta_{j+1}$ converges to $\Delta$ in the Krull topology
when $j \to \infty$. Analogously $\sum_{k=0}^{j} \tilde{\epsilon}_{j}$
converges in the Krull topology to some $\hat{\epsilon}_{\Delta} \in {\mathbb C}[[x,y]]$
when $j \to \infty$. By taking limits in equation (\ref{equ:finlevl}) when $j \to \infty$
we obtain $\hat{\epsilon}_{\Delta} - \hat{\epsilon}_{\Delta} \circ \varphi_{0,w} = y(y-x) \Delta$.
\end{proof}
\begin{lem}
\label{lem:funwd2}
Fix $\Delta \in {\mathbb C}[[x,y]]$.
The series $\hat{\epsilon}_{\Delta}(x,x) - \hat{\epsilon}_{\Delta}(x,0)$ does not depend on the
solution $\hat{\epsilon}_{\Delta} \in {\mathbb C}[[x,y]]$ of
$\hat{\epsilon}  - \hat{\epsilon}  \circ \varphi_{0,w} = y(y-x) \Delta(x,y)$.
\end{lem}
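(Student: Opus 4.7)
The plan is to compare two solutions via their difference, show that it lies in the kernel of the operator $\hat{\eta} \mapsto \hat{\eta} - \hat{\eta} \circ \varphi_{0,w}$, and then use the very explicit structure of $\log \varphi_{0,w}$ given by Lemma \ref{lem:pary} to conclude that any such difference depends only on $x$.

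More concretely, suppose $\hat{\epsilon}_1, \hat{\epsilon}_2 \in {\mathbb C}[[x,y]]$ are both solutions of the homological equation. Setting $\hat{\eta} = \hat{\epsilon}_1 - \hat{\epsilon}_2$, subtraction gives $\hat{\eta} = \hat{\eta} \circ \varphi_{0,w}$. By Lemma \ref{lem:expdif} this is equivalent to $(\log \varphi_{0,w})(\hat{\eta}) = 0$. The goal then is to show that $\hat{\eta} \in {\mathbb C}[[x]]$, which gives at once $\hat{\eta}(x,x) - \hat{\eta}(x,0) = 0$, hence $\hat{\epsilon}_1(x,x) - \hat{\epsilon}_1(x,0) = \hat{\epsilon}_2(x,x) - \hat{\epsilon}_2(x,0)$.

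The key ingredient is Lemma \ref{lem:pary}, which says that
\[ \log \varphi_{0,w} = y(y-x)\, \hat{u}(x,y)\, \frac{\partial}{\partial y} \]
with $\hat{u} \in {\mathbb C}[[x,y]]$ and $\hat{u}(0,0) = w(0,0) \neq 0$, so $\hat{u}$ is a unit in ${\mathbb C}[[x,y]]$. Thus $(\log \varphi_{0,w})(\hat{\eta}) = 0$ reads
\[ y(y-x)\, \hat{u}(x,y)\, \frac{\partial \hat{\eta}}{\partial y} = 0 . \]
Since ${\mathbb C}[[x,y]]$ is an integral domain and both $y(y-x)$ and $\hat{u}$ are nonzero, this forces $\partial \hat{\eta}/\partial y = 0$, i.e. $\hat{\eta} \in {\mathbb C}[[x]]$, which is exactly what is needed.

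There is no real obstacle: the argument is a direct application of the two lemmas just proved. The only subtlety worth noting is that one uses both the uniqueness provided by the $\log/\exp$ correspondence (Lemma \ref{lem:expdif}) to pass from the functional equation $\hat{\eta} = \hat{\eta} \circ \varphi_{0,w}$ to the infinitesimal equation $(\log \varphi_{0,w})(\hat{\eta}) = 0$, and the very special collinearity of $\log \varphi_{0,w}$ with $\partial/\partial y$ (Lemma \ref{lem:pary}), which is itself a consequence of $x \circ \varphi_{0,w} = x$. Without this collinearity one would only conclude that $\hat{\eta}$ is a first integral of $\log \varphi_{0,w}$, which in general is a one-parameter family and would not suffice.
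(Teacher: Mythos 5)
Your proof is correct and follows essentially the same route as the paper: both reduce by linearity to a solution of the homogeneous equation $\hat{\eta} = \hat{\eta} \circ \varphi_{0,w}$, pass to $(\log \varphi_{0,w})(\hat{\eta}) = 0$ via Lemma \ref{lem:expdif}, and invoke the collinearity of $\log \varphi_{0,w}$ with $\partial/\partial y$ from Lemma \ref{lem:pary} to conclude $\hat{\eta} \in {\mathbb C}[[x]]$. Your explicit remark that the integral-domain property of ${\mathbb C}[[x,y]]$ forces $\partial \hat{\eta}/\partial y = 0$ simply spells out a step the paper leaves implicit.
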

\begin{proof}
  It suffices to prove $\hat{\epsilon}(x,x) - \hat{\epsilon}(x,0) =0$ for any solution
$\hat{\epsilon}$ in ${\mathbb C}[[x,y]]$ of
$\hat{\epsilon}  - \hat{\epsilon}  \circ \varphi_{0,w} = 0$. The series $\hat{\epsilon}$
belongs to ${\mathcal F}(\log \varphi_{0,w})$ by lemma \ref{lem:expdif}. Moreover, lemma \ref{lem:pary}
implies $\partial{\hat{\epsilon}}/\partial{y} =0$. Hence $\hat{\epsilon}$ belongs to
${\mathbb C}[[x]]$ and clearly $\hat{\epsilon}(x,x) - \hat{\epsilon}(x,0) =0$.
\end{proof}
Given $\Delta \in {\mathbb C}[[x,y]]$ and a solution
$\hat{\epsilon}_{\Delta} \in {\mathbb C}[[x,y]]$ of the equation
\[ \hat{\epsilon}  - \hat{\epsilon}  \circ \varphi_{0,w} = y(y-x) \Delta(x,y) \]
we define $S_{w}(\Delta) = \hat{\epsilon}_{\Delta}(x,x) - \hat{\epsilon}_{\Delta}(x,0)$.
The lemmas \ref{lem:funwd1} and \ref{lem:funwd2}
imply that $S_{w}:{\mathbb C}[[x,y]] \to {\mathbb C}[[x]]$ is a well-defined linear functional.
Proposition \ref{pro:proequ} implies that if $\varphi_{\Delta,w}$ has embeddable formal class for
any $\Delta \in {\mathfrak m}_{x,y}$ then $S_{w} ({\mathfrak m}_{x,y}) \subset {\mathbb C}\{x\}$.
\begin{lem}
\label{lem:izs}
  Let $w \in {\mathbb C}\{x,y\} \setminus {\mathfrak m}_{x,y}$. Then we have
\[ S_{w} \left({ \frac{\log \varphi_{0,w}}{y(y-x)} [y(y-x)\Delta(x,y)] }\right) =0 \]
for any $\Delta \in {\mathbb C}[[x,y]]$.
\end{lem}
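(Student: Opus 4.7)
The plan is to reformulate $S_w$ as reading off boundary values of a solution of a \emph{differential} equation, and then to exhibit a one-line solution that vanishes on both $y=0$ and $y=x$.

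The first step is to prove an auxiliary claim: for every $\tilde\Delta \in \mathbb{C}[[x,y]]$ and every solution $\hat\Gamma \in \mathbb{C}[[x,y]]$ of
\[
(\log \varphi_{0,w})(\hat\Gamma) \,=\, -\,y(y-x)\tilde\Delta,
\]
one has $S_w(\tilde\Delta) = \hat\Gamma(x,x) - \hat\Gamma(x,0)$. Set $X = \log \varphi_{0,w}$. The difference equation for $\hat\epsilon_{\tilde\Delta}$ reads $(e^X-1)(\hat\epsilon)=-y(y-x)\tilde\Delta = X(\hat\Gamma)$. Factoring $e^X-1 = X\circ\phi(X)$ with $\phi(X)=\sum_{k\geq 0}X^k/(k+1)!$ (convergent in the Krull topology by (\ref{equ:stair})) gives $X\bigl(\phi(X)(\hat\epsilon)-\hat\Gamma\bigr)=0$.

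Next, by Lemma \ref{lem:pary} the vector field $X$ is of the form $y(y-x)U\,\partial/\partial y$ with $U$ a unit, so $\ker X\cap\mathbb{C}[[x,y]]=\mathbb{C}[[x]]$ and therefore $\phi(X)(\hat\epsilon)=\hat\Gamma+c(x)$ for some $c\in\mathbb{C}[[x]]$. Because $X$ takes values in the ideal $(y(y-x))$, every iterate $X^k$ with $k\geq 1$ also takes values there, so $X^k(\hat\epsilon)$ vanishes identically on $y=0$ and on $y=x$; this means $\phi(X)(\hat\epsilon)$ restricts to $\hat\epsilon$ itself on each of the two curves. Evaluating $\phi(X)(\hat\epsilon)=\hat\Gamma+c(x)$ on $y=0$ and on $y=x$ and subtracting kills $c(x)$, yielding
\[
S_w(\tilde\Delta) \,=\, \hat\epsilon(x,x)-\hat\epsilon(x,0) \,=\, \hat\Gamma(x,x)-\hat\Gamma(x,0).
\]

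The lemma then follows by an immediate substitution. Put $F=y(y-x)\Delta$, so the input to $S_w$ is $\tilde\Delta = (\log\varphi_{0,w}/(y(y-x)))[F] = X(F)/(y(y-x))$. The function $\hat\Gamma := -F = -y(y-x)\Delta$ satisfies $X(\hat\Gamma)=-X(F)=-y(y-x)\tilde\Delta$, hence solves the required differential equation, while $\hat\Gamma(x,0)=\hat\Gamma(x,x)=0$. The boxed formula above gives $S_w(\tilde\Delta)=0$.

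The only delicate step is the auxiliary reformulation in the first paragraph; concretely, one must verify that $\phi(X)$ becomes the identity operator upon restriction to $y=0$ or $y=x$, which reduces to the vanishing of $X^k$ on those curves for $k\geq 1$. Everything else is a direct substitution using the splitting $X=y(y-x)L_{0,w}$ provided by Lemma \ref{lem:pary} together with the fact that $x$ is a first integral of $X$.
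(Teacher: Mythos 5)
Your proof is correct, and it takes a genuinely different route from the paper's. The paper proves Lemma \ref{lem:izs} dynamically: it pre-composes the difference equation with the formal flow ${\rm exp}(t \log \varphi_{0,w})$, uses that this flow commutes with $\varphi_{0,w}$ (equation (\ref{equ:comm})) and fixes the lines $y=0$ and $y=x$ pointwise, so that $\hat{\epsilon}_{t} = \hat{\epsilon}_{0} \circ {\rm exp}(t \log \varphi_{0,w})$ has $t$-independent boundary difference; hence $S_{w}$ annihilates the difference quotient $([y(y-x)\Delta] \circ {\rm exp}(t \log \varphi_{0,w}) - y(y-x)\Delta)/(t\,y(y-x))$ for every $t \in {\mathbb C}^{*}$, and the lemma follows by letting $t \to 0$ (a step whose justification implicitly rests on the polynomial dependence on $t$ from Proposition \ref{pro:exp}). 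The lemma is then used in the paper to derive Proposition \ref{pro:difequ}, via an auxiliary series $\Pi$. You run the logic in the opposite direction: your ``auxiliary claim'' is exactly Proposition \ref{pro:difequ}, which you prove from scratch via the operator factorization $e^{X}-1 = X \circ \phi(X)$ with $\phi(X) = \sum_{k \geq 0} X^{k}/(k+1)!$ (Krull-convergent by (\ref{equ:stair}) at $\lambda = 0$), the identification $\ker X \cap {\mathbb C}[[x,y]] = {\mathbb C}[[x]]$ supplied by Lemma \ref{lem:pary}, and the observation that $X^{k}(\hat{\epsilon})$ vanishes on both lines for $k \geq 1$, so that $\phi(X)(\hat{\epsilon})$ restricts to $\hat{\epsilon}$ there; the lemma itself then drops out by the explicit solution $\hat{\Gamma} = -y(y-x)\Delta$, which vanishes on both lines. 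This is non-circular, since your argument uses only Lemmas \ref{lem:funwd1}, \ref{lem:funwd2} and \ref{lem:pary}, all of which precede Lemma \ref{lem:izs}. What each approach buys: yours eliminates the $t \to 0$ limit and the continuity considerations it entails, replacing them with a purely algebraic identity, and it renders the paper's separate proof of Proposition \ref{pro:difequ} redundant; the paper's flow argument, by contrast, uses only that $X$ takes values in the ideal $(y(y-x))$ and not that $x$ is a first integral, so it would transfer verbatim to $\varphi_{\Delta,w}$ with $\Delta \neq 0$, whereas your kernel computation is special to $\varphi_{0,w}$ --- a harmless restriction here, since $S_{w}$ is only ever attached to $\varphi_{0,w}$, and the well-definedness of $S_{w}$ (Lemma \ref{lem:funwd2}) already relies on the same kernel fact.
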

\begin{proof}
Let $\hat{\epsilon}_{0} \in {\mathbb C}[[x,y]]$ be a solution of
$\hat{\epsilon}  - \hat{\epsilon}  \circ \varphi_{0,w} = y(y-x) \Delta(x,y)$.
Let $t \in {\mathbb C}$.
By pre-composing the previous equation with ${\rm exp}(t \ln \varphi_{0,w})$ we obtain
\[ \hat{\epsilon}_{0} \circ {\rm exp}(t \ln \varphi_{0,w}) -
\hat{\epsilon}_{0} \circ \varphi_{0,w} \circ {\rm exp}(t \ln \varphi_{0,w})
=  [y(y-x) \Delta] \circ {\rm exp}(t \ln \varphi_{0,w}). \]
Denote $\hat{\epsilon}_{t} = \hat{\epsilon}_{0} \circ {\rm exp}(t \ln \varphi_{0,w})$.
The formal diffeomorphisms $\varphi_{0,w}$ and ${\rm exp}(t \ln \varphi_{0,w})$ commute
for any $t \in {\mathbb C}$ (see equation (\ref{equ:comm}) in section \ref{sec:basic}).
Thus $\hat{\epsilon}_{t}$ satisfies the equation
\[ \hat{\epsilon}_{t}  - \hat{\epsilon}_{t}  \circ \varphi_{0,w} = [y(y-x) \Delta(x,y)] \circ
{\rm exp}(t \ln \varphi_{0,w}) \]
for any $t \in {\mathbb C}$. Moreover, we have
\[ \hat{\epsilon}_{0} \circ {\rm exp}(t \ln \varphi_{0,w})(x,x) -
\hat{\epsilon}_{0} \circ {\rm exp}(t \ln \varphi_{0,w})(x,0) = \hat{\epsilon}_{0}(x,x) - \hat{\epsilon}_{0}(x,0) . \]
That implies
\[ S_{w} \left({
\frac{[y(y-x)\Delta(x,y)] \circ {\rm exp}(t \ln \varphi_{0,w}) - [y(y-x)\Delta(x,y)]}{t y (y-x)}
}\right) =0 \]
for any $t \in {\mathbb C}^{*}$. By taking the limit at $t=0$ we obtain
the thesis of the lemma.
\end{proof}
Next we replace our difference equation with a differential equation.
\begin{pro}
\label{pro:difequ}
  Let $w \in {\mathbb C}\{x,y\} \setminus {\mathfrak m}_{x,y}$  and $\Delta \in {\mathbb C}[[x,y]]$.
Consider a solution $\hat{\Gamma}_{\Delta}$ of $(\log \varphi_{0,w} ) (\hat{\Gamma})= - y(y-x) \Delta$. Then we have
\[ S_{w}(\Delta) = \hat{\Gamma}_{\Delta}(x,x) - \hat{\Gamma}_{\Delta}(x,0) . \]
\end{pro}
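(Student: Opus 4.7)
The plan is to show that any solution $\hat{\Gamma}_{\Delta}$ of the differential equation $(\log \varphi_{0,w})(\hat{\Gamma}) = -y(y-x)\Delta$ is, modulo an error of the form $L_{0,w}(y(y-x)\tilde{H})$ to which Lemma \ref{lem:izs} applies, a solution of the homological equation defining $S_{w}$. Existence of such a $\hat{\Gamma}_{\Delta}$, as well as the fact that $\hat{\Gamma}_{\Delta}(x,x) - \hat{\Gamma}_{\Delta}(x,0)$ does not depend on the choice of solution, are both immediate from Lemma \ref{lem:pary}: the equation reduces to $\partial \hat{\Gamma}/\partial y$ times a unit equals $-\Delta$, and two solutions differ by an element of $\ker \log \varphi_{0,w} = \mathbb{C}[[x]]$, which is annihilated by the operator $\hat{h} \mapsto \hat{h}(x,x) - \hat{h}(x,0)$.

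Set $X = \log \varphi_{0,w}$. Expanding $\hat{\Gamma}_{\Delta} \circ \varphi_{0,w}$ via (\ref{equ:fflow}), substituting the hypothesis $X(\hat{\Gamma}_{\Delta}) = -y(y-x)\Delta$ in the first-order term, and reindexing $j = k+1$ in the tail yields
\[ \hat{\Gamma}_{\Delta} - \hat{\Gamma}_{\Delta} \circ \varphi_{0,w} = y(y-x)\Delta + X(\hat{H}), \qquad \hat{H} := \sum_{m \geq 0} \frac{X^{m}(y(y-x)\Delta)}{(m+2)!}, \]
with Krull convergence of $\hat{H}$ guaranteed by the nilpotency-type estimate as in (\ref{equ:stair}). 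The key algebraic step is to verify $\hat{H} \in (y(y-x))$. Here Lemma \ref{lem:pary} is essential: since $x \circ \varphi_{0,w} = x$, the vector field $X$ has vanishing $\partial/\partial x$-component and its $\partial/\partial y$-coefficient lies in $(y(y-x))$, so $X$ stabilises the ideal $(y(y-x))$. An induction then gives $X^{m}(y(y-x)\Delta) \in (y(y-x))$ for every $m \geq 0$, so $\hat{H} = y(y-x)\tilde{H}$ for some $\tilde{H} \in \mathbb{C}[[x,y]]$. Using $X = y(y-x)L_{0,w}$ from Lemma \ref{lem:strlog}, the error rewrites as
\[ X(\hat{H}) = y(y-x)\, L_{0,w}\bigl(y(y-x)\tilde{H}\bigr). \]

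Consequently $\hat{\Gamma}_{\Delta}$ is a formal solution of the homological equation with right-hand side $y(y-x)\bigl(\Delta + L_{0,w}(y(y-x)\tilde{H})\bigr)$. Invoking the linearity and well-definedness of $S_{w}$ (Lemmas \ref{lem:funwd1} and \ref{lem:funwd2}) together with Lemma \ref{lem:izs} applied to $\tilde{H}$,
\[ \hat{\Gamma}_{\Delta}(x,x) - \hat{\Gamma}_{\Delta}(x,0) = S_{w}(\Delta) + S_{w}\bigl(L_{0,w}(y(y-x)\tilde{H})\bigr) = S_{w}(\Delta). \]
I expect the main obstacle to be the bookkeeping of the index shift that produces the clean factorisation $(\text{error}) = X(\hat{H})$ with $\hat{H} \in (y(y-x))$; once this structural observation is in place, the conclusion is a direct application of the earlier results.
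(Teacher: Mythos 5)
Your proof is correct and follows essentially the same route as the paper: your $\hat{H}$ is exactly the series the paper writes as $y(y-x)\Pi$, and both arguments conclude by feeding the error term $X(\hat{H}) = y(y-x)\,L_{0,w}(y(y-x)\tilde{H})$ into Lemma \ref{lem:izs} (the paper phrases this via the difference $\hat{\alpha} = \hat{\Gamma}_{\Delta} - \hat{\epsilon}_{\Delta}$, you via linearity of $S_{w}$, which is the same thing). The only cosmetic difference is that for $\hat{H} \in (y(y-x))$ the divisibility $X = y(y-x)L_{0,w}$ from Lemma \ref{lem:strlog} already suffices, so Lemma \ref{lem:pary} is not actually essential at that step.
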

\begin{proof}
We define $\Pi \in {\mathbb C}[[x,y]]$ such that
\[ y(y-x) \Pi = \sum_{k \geq 2} \frac{(\log \varphi_{0,w})^{k-2} (y(y-x)\Delta(x,y))}{k!} . \]
The definition is correct since the right hand side belongs to the ideal $(y(y-x))$ of ${\mathbb C}[[x,y]]$.
By using
$\hat{\Gamma}_{\Delta} \circ \varphi_{0,w} =
\hat{\Gamma}_{\Delta} + \sum_{j=1}^{\infty} (\log \varphi_{0,w})^{j}(\hat{\Gamma}_{\Delta})/j!$
and $(\log \varphi_{0,w} ) (\hat{\Gamma}_{\Delta})= - y(y-x) \Delta$ we obtain
\[ \hat{\Gamma}_{\Delta} - \hat{\Gamma}_{\Delta} \circ \varphi_{0,w} =
y (y-x) \Delta + (\log \varphi_{0,w})(y(y-x) \Pi)   . \]
Consider a solution $\hat{\epsilon}_{\Delta} \in {\mathbb C}[[x,y]]$ of
$\hat{\epsilon}  - \hat{\epsilon}  \circ \varphi_{0,w} = y(y-x) \Delta(x,y)$.
Then $\hat{\alpha} = \hat{\Gamma}_{\Delta} - \hat{\epsilon}_{\Delta}$ is a solution of
\[ \hat{\alpha} - \hat{\alpha} \circ \varphi_{0,w} = y(y-x) \frac{\log \varphi_{0,w}}{y(y-x)}[y(y-x) \Pi] . \]
The right hand side is a formal power series. We obtain
\[ (\hat{\Gamma}_{\Delta} - \hat{\epsilon}_{\Delta})(x,x) - (\hat{\Gamma}_{\Delta} - \hat{\epsilon}_{\Delta})(x,0)
= 0 \]
by lemma \ref{lem:izs}.
\end{proof}
\begin{cor}
  Suppose $\log \varphi_{0,w} \in {\mathcal X}_{N} \cn{2}$. Then
$S_{w} ({\mathbb C}\{x,y\})$ is contained in ${\mathbb C}\{x\}$.
\end{cor}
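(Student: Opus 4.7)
The plan is to combine Proposition \ref{pro:difequ} with the special structure of $\log \varphi_{0,w}$ given by Lemma \ref{lem:pary} to reduce the problem to a convergent ordinary differential equation in $y$ (with $x$ as parameter), which can be solved by direct integration.

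First, by Proposition \ref{pro:difequ}, for any $\Delta \in {\mathbb C}\{x,y\}$ we have
\[ S_{w}(\Delta) = \hat{\Gamma}_{\Delta}(x,x) - \hat{\Gamma}_{\Delta}(x,0) \]
where $\hat{\Gamma}_{\Delta} \in {\mathbb C}[[x,y]]$ is any solution of $(\log \varphi_{0,w})(\hat{\Gamma}) = -y(y-x)\Delta$. So it suffices to exhibit a \emph{convergent} solution $\hat{\Gamma}_{\Delta}$ of this equation when $\log \varphi_{0,w}$ is a germ of holomorphic vector field.

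Next, by Lemma \ref{lem:pary} we can write
\[ \log \varphi_{0,w} = y(y-x) \hat{b}(x,y) \frac{\partial}{\partial y}, \]
where $\hat{b} \in {\mathbb C}[[x,y]]$ satisfies $\hat{b}(0,0)=w(0,0) \neq 0$. The hypothesis $\log \varphi_{0,w} \in {\mathcal X}_{N}\cn{2}$ forces $\hat{b} \in {\mathbb C}\{x,y\}$, so $\hat{b}$ is a convergent unit. The differential equation $(\log \varphi_{0,w})(\hat{\Gamma}) = -y(y-x)\Delta$ thus simplifies to
\[ \frac{\partial \hat{\Gamma}}{\partial y} = - \frac{\Delta(x,y)}{\hat{b}(x,y)}, \]
after canceling the common factor $y(y-x)$. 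Since the right-hand side belongs to ${\mathbb C}\{x,y\}$, the function
\[ \Gamma_{\Delta}(x,y) := - \int_{0}^{y} \frac{\Delta(x,s)}{\hat{b}(x,s)}\, ds \]
defines a germ in ${\mathbb C}\{x,y\}$ solving the equation.

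Finally, Lemma \ref{lem:funwd2} (or equivalently the fact that Proposition \ref{pro:difequ} produces a well-defined value) guarantees that $\hat{\Gamma}_{\Delta}(x,x) - \hat{\Gamma}_{\Delta}(x,0)$ is independent of the particular solution chosen, so we may use our convergent $\Gamma_{\Delta}$. Then
\[ S_{w}(\Delta) = \Gamma_{\Delta}(x,x) - \Gamma_{\Delta}(x,0) = - \int_{0}^{x} \frac{\Delta(x,s)}{\hat{b}(x,s)}\, ds \]
is manifestly an element of ${\mathbb C}\{x\}$, which gives the inclusion $S_{w}({\mathbb C}\{x,y\}) \subset {\mathbb C}\{x\}$. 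There is no real obstacle here; the entire content of the corollary is that once $\log \varphi_{0,w}$ converges, the differential equation in Proposition \ref{pro:difequ} is a convergent ODE in a single variable and admits a convergent solution by quadrature.
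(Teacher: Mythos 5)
Your proof is correct and is precisely the argument the paper intends: the corollary is stated there without a written proof as an immediate consequence of Proposition \ref{pro:difequ} and Lemma \ref{lem:pary}, namely that once $\log \varphi_{0,w}$ converges one may take a convergent solution of the differential equation by quadrature, exactly as you do. The only implicit ingredient, which you correctly assert, is that convergence of $\log \varphi_{0,w}$ forces the formal unit $\hat{b}$ to be convergent (a formal quotient of a convergent series by the polynomial $y(y-x)$ is again convergent), after which the cancellation and the integral formula for $S_{w}(\Delta)$ go through verbatim.
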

\begin{rem}
Given $\log \varphi_{0,w} \in {\mathcal X}_{N} \cn{2}$ there is no a convergent solution
$\hat{\epsilon}_{\Delta}$
of the equation
$\hat{\epsilon}  - \hat{\epsilon}  \circ \varphi_{0,w} = y(y-x) \Delta(x,y)$ for general
$\Delta \in {\mathbb C}\{x,y\}$.
The divergence of $S_{w}(\Delta)$ is a stronger property than the divergence of every $\hat{\epsilon}_{\Delta}$.
\end{rem}
\section{The induced differential equation}
Let $v \in {\mathbb C}[[x,y]]$. Let
$D_{v}:{\mathbb C}\{x,y\} \to {\mathbb C}[[x]]$ be the operator defined by
$D_{v}(H) = \hat{\epsilon}_{H}(x,x) - \hat{\epsilon}_{H}(x,0)$
where $\hat{\epsilon}_{H} \in {\mathbb C}[[x,y]]$ is a solution of the equation
$\partial \hat{\epsilon} / \partial{y} = v H$. The definition of $D_{v}(H)$
does not depend on the choice of $\hat{\epsilon}_{H}$.
This section is devoted to prove
\begin{pro}
\label{pro:main}
Let $v \in {\mathbb C}[[x,y]]$. If $D_{v}({\mathbb C}\{x,y\}) \subset {\mathbb C}\{x\}$ then
$v$ belongs to ${\mathbb C}\{x,y\}$.
\end{pro}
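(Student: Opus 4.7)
The idea is functional-analytic: promote $D_{v}$ to a continuous operator, test it against Cauchy kernels in the $y$-variable, and invert a Hilbert moment system to recover exponential control on the Taylor coefficients of $v$. First I would verify that $D_{v}:{\mathbb C}\{x,y\}\to{\mathbb C}\{x\}$ has a closed graph. Regard both spaces as $(LF)$-spaces ${\mathbb C}\{x,y\}=\varinjlim_{r\to 0^{+}}A_{r}$ and ${\mathbb C}\{x\}=\varinjlim_{r_{0}\to 0^{+}}B_{r_{0}}$, where $A_{r}$ is the Banach space of functions holomorphic and bounded on $\{|x|,|y|<r\}$ with sup norm $\|\cdot\|_{r}$. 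The formal identity $D_{v}(H)(x)=\int_{0}^{x}v(x,s)H(x,s)\,ds$ shows that each Taylor coefficient $[D_{v}(H)]_{p}$ depends polynomially on finitely many Taylor coefficients of $H$, so $D_{v}$ is Krull-continuous. Since convergence in $A_{r}$ and $B_{r_{0}}$ refines Krull convergence, $D_{v}$ has closed graph, hence is continuous by the closed graph theorem for these $(DFN)$ spaces: for every $r>0$ there exist $r_{0}>0$ and $C>0$ such that $\|D_{v}(H)\|_{r_{0}}\leq C\|H\|_{r}$ for every $H\in A_{r}$.

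Next, for each $y_{0}\in{\mathbb C}$ with $|y_{0}|>r$, apply the continuity estimate to the Cauchy kernel $H_{y_{0}}(x,y)=(y_{0}-y)^{-1}$, which lies in $A_{r}$ with norm $(|y_{0}|-r)^{-1}$. A direct calculation gives $D_{v}(H_{y_{0}})(x)=K(x,y_{0})$, where, formally,
\[
K(x,y)=\int_{0}^{x}\frac{v(x,s)}{y-s}\,ds=\sum_{n\geq 0}\psi_{n}(x)\,y^{-n-1},\qquad \psi_{n}(x):=D_{v}(y^{n})(x).
\]
Continuity yields $|K(x,y_{0})|\leq C/(|y_{0}|-r)$ on $|x|<r_{0}$, $|y_{0}|>r$; a Cauchy estimate in $y_{0}$ on $|y_{0}|=2r$ then gives $|\psi_{n}(x)|\leq 2C(2r)^{n}$ on $|x|<r_{0}$, and a further Cauchy estimate in $x$ produces
\[
|[\psi_{n}]_{p}|\leq 2C\,(2r)^{n}\,r_{0}^{-p}\qquad\forall\,n,p\geq 0.
\]

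Write $v=\sum_{j,k}a_{jk}x^{j}y^{k}$ and group by total degree via $b_{qm}:=a_{q-m,m}$ for $0\leq m\leq q$. From $\psi_{n}(x)=\sum_{m}v_{m}(x)\,x^{m+n+1}/(m+n+1)$ one reads off
\[
[\psi_{n}]_{q+n+1}=\sum_{m=0}^{q}\frac{b_{qm}}{m+n+1},
\]
so for fixed $q$ and $n=0,1,\ldots,q$ this is a $(q+1)\times(q+1)$ linear system with coefficient matrix the Hilbert matrix $H_{q}=(1/(i+j+1))_{0\leq i,j\leq q}$. Combining the classical bound $\|H_{q}^{-1}\|_{\infty}\leq AB^{q}$ (absolute constants $A,B>0$) with the previous moment estimate yields $|b_{qm}|\leq MR^{q}$ for some constants $M,R>0$; since $j+k=q$ when $b_{qm}=a_{jk}$, this is the exponential bound $|a_{jk}|\leq MR^{j+k}$, so $v$ converges on the polydisc $\{|x|,|y|<1/R\}$, i.e.\ $v\in{\mathbb C}\{x,y\}$.

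The main obstacle is the moment-inversion step: the Hilbert matrix is notoriously ill-conditioned, with $\|H_{q}^{-1}\|$ of order $(1+\sqrt{2})^{4q}$, so one must check carefully that only the $q+1$ moment bounds for $n\leq q$ are required and that the $n$-profile of the Cauchy estimate on $K$ is sharp enough to absorb the exponential blow-up of $\|H_{q}^{-1}\|$ while still delivering a purely $q$-exponential bound on the $b_{qm}$.
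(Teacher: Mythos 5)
Your proposal is correct, and its decisive second half coincides with the paper's: test $D_{v}$ against the monomials $y^{n}$, observe that the total-degree-$q$ coefficients of $v$ satisfy a $(q+1)\times(q+1)$ moment system whose matrix is the Hilbert matrix $\bigl(1/(m+n+1)\bigr)_{0\leq m,n\leq q}$, and exploit that $\|\mathrm{Hilb}_{q}^{-1}\|$ grows only like $(1+\sqrt{2})^{4q}$ (the paper cites Kalyabin for exactly this asymptotic), so that exponential bounds on the moment data yield exponential bounds on the coefficients $b_{qm}$ and hence convergence of $v$ on a polydisc. Where you genuinely diverge is the functional-analytic input. The paper never invokes a closed graph theorem: it fixes the single Banach space $B_{1,1}$ with the $\ell^{1}$-type coefficient norm, decomposes $D_{v}=\sum_{j}D_{v}^{j}x^{j}$ into continuous coefficient functionals, and proves via the uniform boundedness principle the dichotomy of lemma \ref{lem:boupri} --- either $\limsup_{j\to\infty}\sqrt[j]{\|D_{v}^{j}\|}<\infty$ or $D_{v}(H)\notin{\mathbb C}\{x\}$ for a dense set of $H$ --- so the hypothesis directly forces $\|D_{v}^{j}\|\leq C^{j}$, after which it evaluates on $y^{r-1}$ with no kernel needed. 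Your route through the closed graph theorem on $\varinjlim A_{r}\to\varinjlim B_{r_{0}}$ is also legitimate, but it rests on heavier machinery that you should make explicit: De Wilde's closed graph theorem (ultrabornological source, webbed target) and the regularity of the (DFS)-limit ${\mathbb C}\{x\}$, which is what allows a continuous restriction $A_{r}\to{\mathbb C}\{x\}$ to factor through a fixed step $B_{r_{0}}$ with a norm estimate. One justification is misstated: sup-norm convergence in $A_{r}$ does \emph{not} refine Krull convergence (constants $\epsilon_{n}\to 0$ already fail); what actually closes the graph is your preceding observation that each Taylor coefficient of $D_{v}(H)$ is a continuous \emph{linear} (not merely polynomial) function of finitely many Taylor coefficients of $H$. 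Also, the Cauchy-kernel detour is dispensable, since $\|\psi_{n}\|_{r_{0}}=\|D_{v}(y^{n})\|_{r_{0}}\leq Cr^{n}$ follows at once from your continuity estimate, though the identity $K(x,y)=\int_{0}^{x}v(x,s)(y-s)^{-1}\,ds$ does make the appearance of the Hilbert matrix conceptually transparent. In sum: the paper's version buys elementarity (one Banach space and one Banach--Steinhaus argument), yours buys a cleaner a priori continuity statement at the cost of nontrivial (LF)-space theory; your closing worry about the ill-conditioning of $\mathrm{Hilb}_{q}$ is resolved exactly as in the paper, since exponential-times-exponential growth is still exponential in $q$.
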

Fix $\epsilon,\delta>0$. We define the Banach space $B_{\epsilon,\delta}$ whose elements are
the series  $H=\sum_{0 \leq j,k} H_{j,k} {x}^{j} {y}^{k} \in {\mathbb C}[[x,y]]$ such that
\[ ||H||_{\epsilon,\delta} = \sum_{0 \leq j,k} |H_{j,k}| {\epsilon}^{j} {\delta}^{k} < +\infty . \]
We have $B_{\epsilon,\delta} \subset {\mathbb C}\{x,y\}$. Moreover, a function $H \in B_{\epsilon,\delta}$
is holomorphic in $B(0,\epsilon) \times B(0,\delta)$ and continuous in
$\overline{B}(0,\epsilon) \times \overline{B}(0,\delta)$. Given $v$ in  ${\mathbb C}[[x,y]]$
we can define for $j \geq 1$ the linear functionals $D_{v}^{j}:B_{\epsilon,\delta} \to {\mathbb C}$
such that
\[ D_{v}(H) = \sum_{j \geq 1} D_{v}^{j}(H) {x}^{j} \]
for any $H \in B_{\epsilon, \delta}$.
\begin{lem}
Let $v \in {\mathbb C}[[x,y]]$. Then $D_{v}^{j}:B_{\epsilon,\delta} \to {\mathbb C}$ is a
continuous linear functional for any $j \in {\mathbb N}$.
\end{lem}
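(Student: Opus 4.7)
The plan is to give an explicit formula for $D_{v}^{j}(H)$ exhibiting it as a finite $\mathbb{C}$-linear combination of Taylor-coefficient functionals on $B_{\epsilon,\delta}$. First I would solve $\partial \hat{\epsilon}/\partial y = v H$ by formal integration in $y$. Writing $v = \sum v_{p,q} x^{p} y^{q}$ and $H = \sum H_{r,s} x^{r} y^{s}$, the coefficient of $x^{a} y^{b}$ in $v H$ is the finite sum $c_{a,b} := \sum_{p+r=a,\, q+s=b} v_{p,q} H_{r,s}$, which is well-defined even though $v$ may diverge. Integrating term by term in $y$, the series
\[ \hat{\epsilon}_{H}(x,y) = \sum_{a,b \geq 0} \frac{c_{a,b}}{b+1}\, x^{a} y^{b+1} \]
is a particular solution. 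Any two solutions differ by a series in $x$ alone, so the difference $\hat{\epsilon}_{H}(x,x) - \hat{\epsilon}_{H}(x,0) = \sum_{a,b \geq 0} \frac{c_{a,b}}{b+1}\, x^{a+b+1}$ depends only on $vH$.

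Collecting the coefficient of $x^{j}$ yields the key formula
\[ D_{v}^{j}(H) = \sum_{a+b = j-1} \frac{1}{b+1} \sum_{p+r=a,\, q+s=b} v_{p,q} H_{r,s}. \]
This is a finite expression that only involves the Taylor coefficients $H_{r,s}$ with $r+s \leq j-1$, and only the coefficients $v_{p,q}$ of $v$ with $p+q \leq j-1$; in particular the continuity statement is insensitive to the possible divergence of $v$.

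The continuity then reduces to the observation that each coordinate functional $H \mapsto H_{r,s}$ is continuous on $B_{\epsilon,\delta}$: indeed the estimate $|H_{r,s}|\, \epsilon^{r} \delta^{s} \leq \|H\|_{\epsilon,\delta}$ is immediate from the definition of the norm. Since $D_{v}^{j}$ is a fixed finite $\mathbb{C}$-linear combination of such functionals, it is a continuous linear functional on $B_{\epsilon,\delta}$, with operator norm bounded by $\sum_{a+b=j-1} \sum_{p+r=a,\, q+s=b} |v_{p,q}|\, \epsilon^{-r} \delta^{-s}/(b+1)$.

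The main obstacle is essentially nonexistent; the content of the lemma is just the bookkeeping that isolates the finitely many coefficients of $v$ and $H$ that enter $D_{v}^{j}$. The only point requiring care is that $v$ is only assumed to lie in $\mathbb{C}[[x,y]]$, and the formula above makes it transparent that this causes no issue, since each $D_{v}^{j}$ sees only a polynomial truncation of $v$.
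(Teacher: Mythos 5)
Your proof is correct and takes essentially the same route as the paper: both integrate $vH$ term by term in $y$ to obtain an explicit finite formula for $D_{v}^{j}$ (your $\sum_{a+b=j-1}\frac{1}{b+1}\sum_{p+r=a,\,q+s=b} v_{p,q}H_{r,s}$ coincides with the paper's $\sum_{k+l<j}\bigl(\sum_{a+b=j-k-l-1} v_{a,b}/(b+l+1)\bigr)H_{k,l}$), and both conclude continuity from the bound $|H_{k,l}(H)| \leq \epsilon^{-k}\delta^{-l}\,||H||_{\epsilon,\delta}$ on the coordinate functionals. Your added remarks (well-definedness via solutions differing by a series in $x$ alone, and the explicit operator-norm bound) are harmless refinements of the same argument.
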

\begin{proof}
We denote $H=\sum_{0 \leq k,l} H_{k,l}(H) {x}^{k} {y}^{l}$. Denote
$v= \sum_{a,b \geq 0} v_{a,b} x^{a} y^{b}$. A solution of
\[ \frac{\partial \hat{\epsilon}}{\partial y} = v H = \sum_{a,b,k,l \geq 0} v_{a,b} H_{k,l}(H) x^{a+k} y^{b+l} \]
is obtained by making
$\hat{\epsilon}  = \sum_{a,b,k,l \geq 0}  (b+l+1)^{-1} v_{a,b} H_{k,l}(H) x^{a+k} y^{b+l+1}$.
Hence we have
\[ D_{v}(H) = \sum_{a,b,k,l \geq 0} \frac{v_{a,b} H_{k,l}(H)}{b+l+1} x^{a+k+b+l+1} . \]
and then
\[ D_{v}^{j} = \sum_{k+l<j} \left({ \sum_{a+b=j-k-l-1} \frac{v_{a,b}}{b+l+1} }\right) H_{k,l}
\ \ \forall j \in {\mathbb N} .\]
It suffices to prove that $H_{k,l}: B_{\epsilon,\delta} \to {\mathbb C}$ is a continuous functional
for all $0 \leq k,l$.  Since $|H_{k,l}(H)| \epsilon^{k} \delta^{l} \leq ||H||_{\epsilon, \delta}$
for any $H \in B_{\epsilon, \delta}$ then we obtain
$||H_{k,l}|| = \sup \{ |H_{k,l}(H)|: H \in B_{\epsilon,\delta} \ {\rm with} \ ||H||_{\epsilon,\delta} \leq 1\}
\leq {\epsilon}^{-k} {\delta}^{-l}$.
\end{proof}
\begin{lem}
\label{lem:boupri}
Let $v \in {\mathbb C}[[x,y]]$. Consider the operators $D_{v}^{j}: B_{\epsilon, \delta} \to {\mathbb C}$
for $j \in {\mathbb N}$. Either
$\lim \sup_{j \to \infty} \sqrt[j]{||D_{v}^{j}||} < + \infty$ or
$D_{v}(H) \not \in {\mathbb C}\{x\}$ for any $H$ in a dense subset of $B_{\epsilon,\delta}$.
\end{lem}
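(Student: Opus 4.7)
The plan is to establish this dichotomy as a standard Baire category / uniform boundedness argument, exploiting the fact that $B_{\epsilon,\delta}$ is a Banach space and each $D_{v}^{j}$ is a continuous linear functional by the previous lemma.

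First, I would reformulate the assertion $D_{v}(H) \in {\mathbb C}\{x\}$ via the Cauchy--Hadamard criterion: $\sum_{j} D_{v}^{j}(H) x^{j}$ has positive radius of convergence if and only if there exist $n, N \in {\mathbb N}$ such that $|D_{v}^{j}(H)| \leq N n^{j}$ for every $j \in {\mathbb N}$. This exhibits the set
\[
\mathcal{C} = \{ H \in B_{\epsilon,\delta} \, : \, D_{v}(H) \in {\mathbb C}\{x\} \}
\]
as the countable union $\mathcal{C} = \bigcup_{n,N \in {\mathbb N}} A_{n,N}$, where $A_{n,N} = \bigcap_{j \in {\mathbb N}} \{ H \in B_{\epsilon,\delta} : |D_{v}^{j}(H)| \leq N n^{j} \}$. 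The continuity of each $D_{v}^{j}$ shows $A_{n,N}$ is an intersection of closed half-spaces, hence closed in $B_{\epsilon,\delta}$.

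Second, I assume the failure of the first alternative, namely $\limsup_{j \to \infty} \sqrt[j]{\|D_{v}^{j}\|} = +\infty$, and show that every $A_{n,N}$ has empty interior. Suppose for contradiction that some $A_{n,N}$ contains an open ball $B(H_{0}, \rho)$. Then for any $H \in B_{\epsilon,\delta}$ with $\|H\|_{\epsilon,\delta} \leq \rho$ we have $H_{0} \pm H \in A_{n,N}$, so linearity of $D_{v}^{j}$ gives
\[
|D_{v}^{j}(H)| = \tfrac{1}{2}\bigl| D_{v}^{j}(H_{0}+H) - D_{v}^{j}(H_{0}-H) \bigr| \leq 2 N n^{j}
\]
for every $j$. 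Rescaling to the unit ball yields $\|D_{v}^{j}\| \leq 2 N n^{j}/\rho$, whence $\limsup_{j} \sqrt[j]{\|D_{v}^{j}\|} \leq n < +\infty$, a contradiction.

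Consequently $\mathcal{C}$ is a countable union of closed nowhere-dense sets, hence meager in the complete metric space $B_{\epsilon,\delta}$; the Baire category theorem then yields that its complement $\{H \in B_{\epsilon,\delta} : D_{v}(H) \notin {\mathbb C}\{x\}\}$ is a dense (in fact residual) subset of $B_{\epsilon,\delta}$. I do not expect any serious obstacle here: the statement is a textbook application of Baire category to a family of continuous linear functionals, and the only step requiring a little attention is the scaling/translation trick used to convert a pointwise bound on an open ball into a bound on the operator norm.
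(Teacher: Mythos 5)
Your proof is correct, and it reaches the dichotomy by a slightly different packaging of the same Baire-category principle that the paper uses. The paper does not decompose the convergence set at all: it assumes $\limsup_{j}\sqrt[j]{\|D_{v}^{j}\|}=+\infty$, picks an auxiliary sequence $a_{j}\to\infty$ with $\limsup_{j}\sqrt[j]{\|D_{v}^{j}\|}/a_{j}=+\infty$, and applies the uniform boundedness principle (in its strong, resonance form: an operator-norm-unbounded sequence of functionals is pointwise unbounded on a dense set) to the rescaled functionals $D_{v}^{j}/a_{j}^{j}$; for $H$ in the resulting dense set one gets $\limsup_{j}\sqrt[j]{|D_{v}^{j}(H)|}\geq\liminf_{j}a_{j}=+\infty$, hence divergence of $D_{v}(H)$. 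You instead use Cauchy--Hadamard to write $\{H:D_{v}(H)\in{\mathbb C}\{x\}\}=\bigcup_{n,N}A_{n,N}$ with $A_{n,N}$ closed, kill the interiors of the $A_{n,N}$ by the standard translation/symmetrization trick, and conclude by Baire directly. What your route buys: it is self-contained modulo the Baire category theorem itself (you do not need to quote the condensation-of-singularities form of Banach--Steinhaus, nor invent the sequence $(a_{j})$, which is the one genuinely clever step in the paper's version), and it makes explicit that the divergence set is residual, not merely dense. What the paper's route buys is brevity, since the resonance theorem absorbs exactly the decomposition-and-interior argument you carry out by hand. Two cosmetic points in your write-up: with the open ball $B(H_{0},\rho)\subset A_{n,N}$ you should either take $\|H\|_{\epsilon,\delta}<\rho$ or observe that the closed ball is also contained in $A_{n,N}$ because $A_{n,N}$ is closed (which is why your ``$\leq\rho$'' is harmless), and your symmetrization actually yields the sharper bound $|D_{v}^{j}(H)|\leq Nn^{j}$ rather than $2Nn^{j}$; neither affects the conclusion $\limsup_{j}\sqrt[j]{\|D_{v}^{j}\|}\leq n$.
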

\begin{proof}
  Suppose $\lim \sup_{j \to \infty} \sqrt[j]{||D_{v}^{j}||} = + \infty$. We choose a sequence
$(a_{j})$ of positive numbers such that $a_{j} \to \infty$ and
\[ \lim \sup_{j \to \infty} \frac{\sqrt[j]{||D_{v}^{j}||}}{a_{j}} = + \infty. \]
Hence $\lim \sup_{j \to \infty} ||D_{v}^{j} /a_{j}^{j}|| = + \infty$. We deduce that
\[ \lim \sup_{j \to \infty} |D_{v}^{j}(H)| / a_{j}^{j} = + \infty \]
for any $H$ in a dense subset $E$ of $B_{\epsilon,\delta}$ by the uniform boundedness principle.
Moreover, since
\[ \lim \sup_{j \to \infty} \sqrt[j]{|D_{v}^{j}(H)|} \geq \lim \inf_{j \to \infty} a_{j} = + \infty \]
then $D_{v}(H) \not \in {\mathbb C}\{ x \}$ for any $H \in E$.
\end{proof}
\begin{pro}
Consider $v \in {\mathbb C}[[x,y]]$ and $D_{v}: B_{\epsilon, \delta} \to {\mathbb C}[[x]]$.
Suppose $D_{v}(B_{\epsilon,\delta}) \subset {\mathbb C}\{x\}$.
Then there exists $\eta_{\epsilon,\delta} >0$ such that
$D_{v}(H)$ belongs to ${\mathcal O}(B(0,\eta_{\epsilon,\delta}))$ for any $H \in B_{\epsilon,\delta}$.
\end{pro}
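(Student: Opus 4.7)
The plan is to deduce the proposition as a direct consequence of Lemma \ref{lem:boupri}. That lemma already contains the hard analytic input—a uniform boundedness principle argument for the sequence of continuous functionals $D_{v}^{j}: B_{\epsilon,\delta} \to {\mathbb C}$—and presents the dichotomy in precisely the form needed here.

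First, I would rule out the second alternative of Lemma \ref{lem:boupri}: under the standing hypothesis $D_{v}(B_{\epsilon,\delta}) \subset {\mathbb C}\{x\}$, there cannot exist even a single $H \in B_{\epsilon,\delta}$ with $D_{v}(H) \notin {\mathbb C}\{x\}$, so in particular no such $H$ can exist in a dense subset. Consequently
\[ M := \limsup_{j \to \infty} \sqrt[j]{\|D_{v}^{j}\|} < +\infty . \]

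Second, I would convert this operator-norm bound into a uniform lower bound for radii of convergence. Fix any $M' > M$ and a constant $C>0$ with $\|D_{v}^{j}\| \leq C (M')^{j}$ for every $j \in {\mathbb N}$. For every $H \in B_{\epsilon,\delta}$ one then has the coefficient estimate
\[ |D_{v}^{j}(H)| \leq C (M')^{j} \|H\|_{\epsilon,\delta} \quad \forall j \geq 1, \]
so Cauchy--Hadamard ensures that the series $D_{v}(H)(x) = \sum_{j \geq 1} D_{v}^{j}(H) x^{j}$ has radius of convergence at least $1/M'$, \emph{uniformly in $H$}. Setting $\eta_{\epsilon,\delta} := 1/M'$ (or any positive number if $M=0$) produces an open disk on which $D_{v}(H) \in {\mathcal O}(B(0,\eta_{\epsilon,\delta}))$ simultaneously for every $H$ in $B_{\epsilon,\delta}$.

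I expect no genuine obstacle in this step: the proposition is essentially the holomorphy consequence of the finite-$\limsup$ branch of Lemma \ref{lem:boupri}, and the delicate point—checking that the functionals $D_{v}^{j}$ really do form an equicontinuous family once the dense-divergence alternative is excluded—has already been discharged in that lemma via Banach--Steinhaus. What would remain interesting, but lies beyond this statement, is to compare the radius $\eta_{\epsilon,\delta}$ obtained here for different $(\epsilon,\delta)$ in order to conclude the next step toward Proposition \ref{pro:main}.
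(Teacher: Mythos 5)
Your proof is correct and follows essentially the same route as the paper: both exclude the dense-divergence alternative of Lemma \ref{lem:boupri} using the hypothesis $D_{v}(B_{\epsilon,\delta}) \subset {\mathbb C}\{x\}$, then pass from the resulting bound on $\limsup_{j \to \infty} \sqrt[j]{\|D_{v}^{j}\|}$ to a uniform radius of convergence via $|D_{v}^{j}(H)| \leq \|D_{v}^{j}\|\, \|H\|_{\epsilon,\delta}$ and Cauchy--Hadamard. Your intermediate step through $M' > M$ and a constant $C$ is only a cosmetic variant of the paper's direct $\limsup$ estimate.
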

\begin{proof}
   There exists $\eta_{\epsilon,\delta}>0$ such that
$\lim \sup_{j \to \infty} \sqrt[j]{||D_{v}^{j}||} \leq 1 / \eta_{\epsilon,\delta}$
by lemma \ref{lem:boupri}. As a consequence
\[ \lim \sup_{j \to \infty} \sqrt[j]{|D_{v}^{j}(H)|} \leq
\lim \sup_{j \to \infty} \left({ \sqrt[j]{||D_{v}^{j}||} \sqrt[j]{||H||_{\epsilon,\delta}} }\right)
\leq  1 / \eta_{\epsilon,\delta} . \]
That implies that $D_{v}(H) \in {\mathcal O}(B(0,\eta_{\epsilon,\delta}))$ for any $H \in B_{\epsilon,\delta}$.
\end{proof}
\begin{proof}[Proof of prop. \ref{pro:main}]
Consider the functionals $D_{v}:B_{1,1} \to {\mathbb C}[[x]]$ and
$D_{v}^{j}:B_{1,1} \to {\mathbb C}$ for any $j \geq 1$.
Since $D_{v}(B_{1,1}) \subset {\mathbb C} \{ x \}$ then there exists $C \geq 1$ such that
$||D_{v}^{j}|| \leq C^{j}$ for any $j \geq 1$ by lemma \ref{lem:boupri}.
We denote $v = \sum_{0 \leq a,b} v_{a,b} {x}^{a} {y}^{b}$. We have
\[ D_{v}^{1}(1)=v_{0,0} \Rightarrow |v_{0,0}| \leq ||D_{v}^{1}|| ||1||_{1,1} \leq C. \]
We want to estimate $v_{k,0}$, $v_{k-1,1}$, $\hdots$, $v_{0,k}$ for any $k \geq 0$.
Let us calculate $D_{v}(y^{r-1})$ for $r \in {\mathbb N}$.
The equation $\partial \hat{\epsilon} / \partial y = v y^{r-1} =
\sum_{a,b \geq 0} v_{a,b} x^{a} y^{b+r-1}$ has a solution
$\hat{\epsilon} = \sum_{a,b \geq 0} (b+r)^{-1} v_{a,b} x^{a} y^{b+r}$.
We deduce that
\[ D_{v}(y^{r-1})= \sum_{a,b \geq 0} (b+r)^{-1} v_{a,b} x^{a+b+r} . \]
In particular $D_{v}^{k+r}(y^{r-1})= \sum_{a+b=k} v_{a,b}/(b+r) = \sum_{b=0}^{k} v_{k-b,b}/(b+r)$
for any $r \in {\mathbb N}$. We obtain
\[ \mbox{Hilb}^{k}
\left({
\begin{array}{c}
v_{k,0} \\
v_{k-1,1} \\
\vdots \\
v_{0,k}
\end{array}
}\right)
=
\left({
\begin{array}{c}
D_{v}^{k+1}(1) \\
D_{v}^{k+2}(y) \\
\vdots \\
D_{v}^{2k+1}(y^{k})
\end{array}
}\right)  \]
where $\mbox{Hilb}^{k}$ is the $(k+1) \times (k+1)$ Hilbert matrix; this is a real symmetric matrix such that
$\mbox{Hilb}_{a,b}^{k} = 1 /(a+b-1)$ for $1 \leq a,b \leq k+1$.
%
%
Moreover $\mbox{Hilb}^{k}$ is positive definite and following \cite{Kalyabin} we obtain that
\[ ||{(\mbox{Hilb}^{k})}^{-1}||_{2} = \frac{{\rho}^{4k}}{K \sqrt{k}} (1+o(1)) \]
where $K=(8 \pi^{3/2} 2^{3/4})/{(1+ \sqrt{2})}^{4}$, $\rho =1 + \sqrt{2}$ and
${||\hdots||}_{2}$ is the spectral norm.
We have $|D_{v}^{k+l+1}(y^{l})| \leq ||D_{v}^{k+l+1}|| ||{y}^{l}||_{1,1} \leq C^{k+l+1}$.
As a consequence we obtain
\[ ||v_{k,0}, \hdots, v_{0,k}||_{2} \leq \frac{{\rho}^{4k}}{K \sqrt{k}} \sqrt{k+1} C^{2k+1} (1+o(1)) . \]
where $||v_{k,0}, \hdots, v_{0,k}||_{2}$ is the euclidean norm. Then
\[ |v_{l,m}| \leq \frac{{\rho}^{4(l+m)}}{K \sqrt{l+m}} \sqrt{l+m+1} C^{2(l+m)+1} (1+o(1)) \]
for $0 \leq l,m$ where $\lim_{l+m \to \infty} o(1)=0$.
We deduce that $v$ belongs to
${\mathcal O}(B(0,1/({\rho}^{4} {C}^{2})) \times B(0,1/({\rho}^{4} {C}^{2})) )$.
\end{proof}
\section{End of the proof of the Main Theorem}
The following proposition will imply the Main Theorem.
\begin{pro}
\label{pro:main2}
Let $w \in {\mathbb C} \{x,y\} \setminus {\mathfrak m}_{x,y}$. Suppose that
$\log \varphi_{0,w}$ is not convergent.
Then there exists $\Delta \in {\mathfrak m}_{x,y}$ such that
$\varphi_{\Delta,w}$ has non-embeddable formal class.
\end{pro}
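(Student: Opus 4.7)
The plan is to argue by contradiction: suppose that for every $\Delta \in {\mathfrak m}_{x,y}$ the formal class of $\varphi_{\Delta,w}$ is embeddable, and aim to conclude that $\log \varphi_{0,w}$ is convergent. For any such $\Delta$ the whole one-parameter family $\lambda \Delta$ ($\lambda \in {\mathbb C}$) stays inside ${\mathfrak m}_{x,y}$, so Proposition \ref{pro:proequ} produces a formal solution $\hat{\epsilon}_{\Delta}$ of the homological equation with $\hat{\epsilon}_{\Delta}(x,x) - \hat{\epsilon}_{\Delta}(x,0) \in {\mathbb C}\{x\}$. In the language of the operator $S_w$ this is the inclusion $S_{w}({\mathfrak m}_{x,y}) \subset {\mathbb C}\{x\}$.

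Next I would translate this difference-equation information into the differential-equation setting handled in Section 6. By Lemma \ref{lem:pary}, one has $\log \varphi_{0,w} = \hat{w}\, y(y-x)\, \partial/\partial y$ where $\hat{w} \in {\mathbb C}[[x,y]]$ is a unit with $\hat{w}(0,0) = w(0,0)$; in particular $\hat{w}$ is convergent if and only if $\log \varphi_{0,w}$ is. Setting $v := -1/\hat{w} \in {\mathbb C}[[x,y]]$, Proposition \ref{pro:difequ} identifies $S_w(\Delta)$ with $D_v(\Delta)$ for every $\Delta \in {\mathbb C}\{x,y\}$, so the previous inclusion becomes
\[
D_{v}({\mathfrak m}_{x,y}) \subset {\mathbb C}\{x\}.
\]

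To apply Proposition \ref{pro:main} I still need the inclusion on all of ${\mathbb C}\{x,y\}$, that is, I need $D_v(1) \in {\mathbb C}\{x\}$. The explicit formula for a primitive of $vxH$ in $y$ (the one worked out in Section 6) gives at once the bookkeeping identity $D_v(xH) = x D_v(H)$ for every $H \in {\mathbb C}\{x,y\}$; taking $H=1$ yields $D_v(x) = x D_v(1)$. Since $x \in {\mathfrak m}_{x,y}$ we already know $D_v(x) \in {\mathbb C}\{x\}$, and dividing by $x$ (permissible because $D_v(x) \in x{\mathbb C}[[x]]$) produces $D_v(1) \in {\mathbb C}\{x\}$. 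By linearity $D_v({\mathbb C}\{x,y\}) \subset {\mathbb C}\{x\}$, so Proposition \ref{pro:main} forces $v$, and hence $\hat{w} = -1/v$, to be convergent---contradicting the assumption on $\log \varphi_{0,w}$. The deep ingredient (the Hilbert-matrix/uniform-boundedness estimate) is already encapsulated in Proposition \ref{pro:main}; the one point that takes a moment's thought, and where I expect the only genuine obstacle of the assembly to sit, is the bridging identity $D_v(x) = x D_v(1)$ that lets us pass from test functions in ${\mathfrak m}_{x,y}$ to arbitrary test functions in ${\mathbb C}\{x,y\}$.
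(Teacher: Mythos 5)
Your argument is correct and is essentially the paper's own proof: the same contradiction scheme run through Proposition \ref{pro:proequ} (using that $\lambda\Delta\in{\mathfrak m}_{x,y}$ for all $\lambda$), Lemma \ref{lem:pary}, Proposition \ref{pro:difequ} and Proposition \ref{pro:main}. The only deviation is where the factor $x$ sits: the paper sets $v=-x/\hat{w}$, so that $D_{v}(H)=S_{w}(xH)$ with $xH\in{\mathfrak m}_{x,y}$ automatically, giving $D_{v}({\mathbb C}\{x,y\})\subset{\mathbb C}\{x\}$ with no bridging step and recovering the convergence of $\hat{w}$ by dividing the convergent series $-x/\hat{w}$ by $x$ at the end, whereas your identity $D_{v}(xH)=xD_{v}(H)$ (valid, since $x\hat{\epsilon}_{H}$ solves the primitive equation for $xH$, and the division by $x$ is legitimate because $D_{v}$ always lands in $x{\mathbb C}[[x]]$) performs the same bookkeeping on the output side.
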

\begin{proof}
Suppose the result is false. Hence
$S_{w}({\mathfrak m}_{x,y}) \subset {\mathbb C} \{x\}$ by
proposition \ref{pro:proequ}. Let $\hat{w} \in {\mathbb C}[[x,y]]$ be the
unit such that $L_{0,w} = \hat{w}(x,y) \partial / \partial{y}$
(see lemma \ref{lem:pary}). By hypothesis $\hat{w}$ is a divergent power series.
By proposition \ref{pro:difequ} the series
$\hat{\Gamma}_{\Delta}(x,x) - \hat{\Gamma}_{\Delta}(x,0)$ belongs to ${\mathbb C}\{x\}$
for any solution $\hat{\Gamma}_{\Delta} \in {\mathbb C} [[x,y]]$ of
\[ \frac{\partial \hat{\Gamma}}{\partial{y}} = - \frac{\Delta(x,y)}{\hat{w}(x,y)} \]
and any $\Delta \in  {\mathfrak m}_{x,y}$.
Since $D_{-x/\hat{w}}({\mathbb C}\{x,y\}) \subset {\mathbb C}\{x\}$ then
$-x /\hat{w}$ belongs to ${\mathbb C} \{x,y\}$ by proposition \ref{pro:main}. We deduce that
$\hat{w} \in {\mathbb C} \{x,y\}$; that is a contradiction.
\end{proof}
To end the proof of the Main Theorem it suffices to exhibit an example of a
diffeomorphism $\varphi_{0,w}$ such that $\log \varphi_{0,w}$ is divergent by
proposition \ref{pro:main2}.

If $w(0,y) \in {\mathcal O}({\mathbb C})$ then $(y \circ \varphi_{0,w})(0,y)$ is an entire function
different than $y$. Then ${(\log \varphi_{0,w})}_{|x=0}$ is divergent (see \cite{Ah-Ro}).
Therefore $\log \varphi_{0,w}$ is divergent. In particular we can choose $w=1$.
\section{Remarks and generalizations}
  In our approach a unipotent $\tau \in \diff{}{n}$ has embeddable formal class
if $\log \tau$ is formally conjugated to a germ of convergent vector field.
This is a strong concept of embeddability. There is an alternative definition:
We say that the formal class of $\tau \in \diff{}{n}$ is weakly embeddable if there exists a germ of
vector field $Y$ vanishing at $0$ whose exponential is formally conjugated to $\tau$.
This definition is suppler but not so geometrically significant.
For instance $Id = {\rm exp}(0) \in \diff{}{}$ is the exponential of
every germ of vector field whose first jet is $2 \pi i z \partial / \partial{z}$.
It is natural to restrict our study to the strong case.
Anyway, in the family $(\varphi_{\Delta,w})$ the strong and weak concepts of
embeddability for formal classes coincide. Hence the formal class of a general $\varphi_{\Delta,w}$
is non-weakly embeddable.

 There exists $\varphi_{\Delta_{0},w_{0}}$ whose formal class is non-embeddable.
That is the generic situation. Consider the set
\[ E = \{ \varphi_{\Delta,w} : \Delta \in {\mathfrak m}_{x,y}  \ {\rm and} \  w(0,0)=1 \} \subset \diff{u}{2} . \]
For every $\varphi_{\Delta,w}$ there exists $\mu \in {\mathbb C}^{*}$
such that $(x/\mu,y/\mu) \circ \varphi_{\Delta,w} \circ (\mu x, \mu y)$ is in $E$.
Then to study the embeddability of formal classes in the family $(\varphi_{\Delta,w})$ we can restrict
ourselves to $E$. In particular we can suppose $\varphi_{\Delta_{0},w_{0}} \in E$. The set $E$ is an affine
space whose underlying vector space is ${\mathfrak m}_{x,y} \times {\mathfrak m}_{x,y}$. Then $E$ is the
union of the complex lines
\[ L_{A,B}: \lambda \mapsto \varphi_{\Delta_{0}+\lambda A, w_{0} + \lambda B} .  \]
where $A,B \in {\mathfrak m}_{x,y}$.
By arguing like in section \ref{sec:polfam} and applying proposition \ref{pro:PM} we can prove that for any
line $L_{A,B}$ through $\varphi_{\Delta_{0},w_{0}}$ the transport mapping
is divergent outside of a polar set. The non-embeddability of the formal class
is clearly the generic situation in $E$.

  Consider $\Delta, w$ such that $\varphi_{\Delta, w}$ has non-embeddable formal class.
We define
\[ \varphi_{\Delta, w}^{n}  = (z_{1} + z_{2}(z_{2}-z_{1}) \Delta(z_{1},z_{2}),
z_{2} + z_{2}(z_{2}-z_{1}) w(z_{1},z_{2}), z_{3},\hdots,z_{n}) . \]
Then  $\varphi_{\Delta, w}^{n} \in \diffh{u}{n} \cap \diff{}{n}$ has non-embeddable formal class for any
$n \geq 2$. Hence there are unipotent germs of diffeomorphisms with non-embeddable formal class
for any dimension greater than $1$.

Let $f \in  {\mathfrak m}_{x,y}$. Consider the family
\[ \varphi_{\Delta, w}^{f} = (x + f(x,y) \Delta(x,y), y + f(x,y) w(x,y))  \]
where $\Delta (0,0)=0 \neq w(0,0)$. The choice  $f=y(y-x)$ is by no means special. We can choose $f$
such that its decomposition ${x}^{m} f_{1}^{n_{1}} \hdots f_{p}^{n_{p}}$ in irreducible
factors satisfies $\nu((f_{1} \hdots f_{p})(0,y)) > 1$. This condition means that
in a suitable domain $\sharp (\{f=0\} \cap \{x=c\}) >1$ for any $c \neq 0$ in a neighborhood of $0$.
It is the condition we need to define an analogue of the transport mapping.
Fix $w$ such that $\ln \varphi_{0,w}^{f}$ is divergent;
we can adapt the results in this paper to prove that there exists $\varphi_{\Delta, w}^{f}$
with non-embeddable formal class for some
$\Delta \in {\mathfrak m}_{x,y}$. The two main difficulties in the proof are:
\begin{itemize}
\item The formal invariants are slightly more complicated \cite{UPD} \cite{nota}.
This phenomenon is isolated in the example $f= {y}^{a_{0}} {(y-x)}^{a_{1}}$ for
$a_{j} \in {\mathbb N}$. If $a_{j}>1$ the function ${|Jac \  \varphi_{\Delta, w}^{f}|}_{|y=jx}$
is identically equal to $1$, it is a trivial formal invariant.
Anyway, there are always non-constant functions on $y=0$ and $y=x$
which are formal invariants. This is crucial to prove proposition \ref{pro:tracon}
since otherwise we can not claim that the action of a formal conjugation on
a fixed points curve is convergent. The rest of the proof is basically the same.

\item The technical details in the proofs are in general trickier.
That is the situation if the curve $f=0$ is complicated, for instance if its
components are singular.
Anyway, the proof basically follows the same lines. The additions are intended
to make the strategy in this paper work. We chose the case $f=y(y-x)$ because the
presentation is clearer but it contains all the main ideas.
\end{itemize}

\bibliographystyle{plain}
\bibliography{rendu}
\end{document}